\newtheorem{swtheorem}{Theorem}
\newtheorem{swcor}{Corollary}[swtheorem]
\theoremstyle{plain}
\newtheorem{thm}{Theorem}[section]
\newtheorem{theorem}[thm]{Theorem}
\newtheorem{lemma}[thm]{Lemma}
\newtheorem{conjec}[thm]{Conjecture}
\newtheorem{prop}[thm]{Proposition}
\newtheorem{cor}[thm]{Corollary}
\def\@tocline#1#2#3#4#5#6#7{\relax
  \ifnum #1>\c@tocdepth 
  \else
    \par \addpenalty\@secpenalty\addvspace{#2}%
    \begingroup \hyphenpenalty\@M
    \@ifempty{#4}{%
      \@tempdima\csname r@tocindent\number#1\endcsname\relax
    }{%
      \@tempdima#4\relax
    }%
    \parindent\z@ \leftskip#3\relax \advance\leftskip\@tempdima\relax
    \rightskip\@pnumwidth plus4em \parfillskip-\@pnumwidth
    #5\leavevmode\hskip-\@tempdima
      \ifcase #1
       \or\or \hskip 1em \or \hskip 2em \else \hskip 3em \fi%
      #6\nobreak\relax
    \hfill\hbox to\@pnumwidth{\@tocpagenum{#7}}\par
    \nobreak
    \endgroup
  \fi}
\theoremstyle{definition}
\newtheorem{defn}[thm]{Definition}
\newtheorem{remark}[thm]{Remark}
\newtheorem{ex}[thm]{Example}
\newtheorem{question}[thm]{Question}
\DeclareMathOperator{\cat}{\mathsf{cat}}
\DeclareMathOperator{\dcat}{\mathsf{dcat}}
\DeclareMathOperator{\cu}{\mathsf{cl}}
\DeclareMathOperator{\zcl}{\mathsf{zcl}}
\DeclareMathOperator{\szcl}{\mathsf{szcl}}
\DeclareMathOperator{\TC}{\mathsf{TC}}
\DeclareMathOperator{\dTC}{\mathsf{dTC}}
\DeclareMathOperator{\cd}{{\rm cd}}
\DeclareMathOperator{\Ker}{{\rm Ker}}
\DeclareMathOperator{\supp}{{\rm supp}}
\def\pr{\protect\operatorname{pr}}
\def\B{{\mathcal B}}
\def\P{{\mathbb P}}
\def\T{{\mathbb T}}
\newcommand \pa[2]{\frac{\partial #1}{\partial #2}}
\def\scr{\mathcal}
\def\B{{\scr B}}
\def\C{{\mathbb C}}
\def\Z{{\mathbb Z}}
\def\Q{{\mathbb Q}}
\def\R{{\mathbb R}}
\def\1{\hbox{\rm\rlap {1}\hskip.03in{\rom I}}}
\def\Bbbone{{\rm1\mathchoice{\kern-0.25em}{\kern-0.25em}
{\kern-0.2em}{\kern-0.2em}I}}
\def\pa{\partial}
\def\wt{\widetilde}
\def\wh{\widehat}
\def\ov{\overline}
\long\def\forget#1\forgotten{}
\newcommand\ver[1]{\marginpar{\tiny Changed in Ver \VER}}
\date{\today}
\begin{document}

\title[Topological complexity of symmetric products]{LS-category and sequential topological complexity of symmetric products}

\author[Ekansh Jauhari]{Ekansh Jauhari}

\address{Ekansh Jauhari, Department of Mathematics, University
of Florida, 358 Little Hall, Gainesville, FL 32611-8105, USA.}
\email{ekanshjauhari@ufl.edu}

\subjclass[2010]
{Primary 55M30, 
55S15, 
Secondary 57M20, 
55N10. 
}

\begin{abstract}
The $n$-th symmetric product of a topological space $X$ is the orbit space of the natural action of the symmetric group $S_n$ on the product space $X^n$. 
In this paper, we compute the sequential topological complexities of (finite products of) symmetric products of closed orientable surfaces, thereby verifying the rationality conjecture of Farber and Oprea for these spaces. 
Additionally, we determine the Lusternik--Schnirelmann category of (finite products of) the symmetric products of closed non-orientable surfaces.
More generally, we provide lower bounds to the LS-category and the sequential topological complexities of the symmetric products of finite CW complexes $X$ in terms of the cohomology of $X$ and its products. 
On the way, we also obtain new lower bounds to the sequential distributional complexities of continuous maps and study the homotopy groups of the symmetric products of closed surfaces.
\end{abstract}



\keywords{symmetric product, sequential topological complexity, LS-category, finite CW complex, sequential distributional complexity, LS-logarithmic, TC-logarithmic.}

\maketitle
\tableofcontents

\section{Introduction}\label{Introduction}
The topological complexity of a topological space~\cite{Far1},~\cite{Ru} is a homotopy invariant that measures the complexity of moving points continuously within the space. More precisely, given a ``nice'' topological space $Y$ and an integer $m\ge 2$, the $m$-th sequential topological complexity of $Y$, denoted $\TC_m(Y)$, records the minimal \emph{order of instability}~\cite{Far2} of motion planning algorithms (on mechanical systems having configuration space $Y$) that take as input ordered $m$-tuples of positions in $Y$ and produce as output a uniformly timed motion of the system in $Y$ through those $m$ positions attained in that order. If $Y$ is the $n$-th symmetric product of a topological space $X$, denoted $SP^n(X)$, then positions in $Y$ become unordered $n$-tuples of positions in $X$. 

Though the notion of sequential topological complexity of spaces was initially motivated by topological robotics, it has been studied as a homotopy invariant for several classes of spaces, see, for example,~\cite{Far1},~\cite{Far2},~\cite{Ru},~\cite{BGRT},~\cite{LS}, \cite{FO},~\cite{AGO},~\cite{HL}, and the references therein. Furthermore, it is closely related to another homotopy invariant of spaces, called the Lusternik--Schnirelmann category ($\cat$), which is much older,~\cite{BG},~\cite{Ja},~\cite{CLOT}. These homotopy invariants are well-known to be special cases of Schwarz's general notion of the {sectional category} of fibrations,~\cite{Sch}. We refer the reader to~\cite{CLOT},~\cite{Far1},~\cite{Ru} for details.

In general, precisely determining the LS-category of a space is difficult, and determining (or even estimating) its (sequential) topological complexity can be even more difficult, see, for example,~\cite{FTY},~\cite{Dr2},~\cite{CV}. 

In this paper, we study the problem of estimating the sequential topological complexities and the Lusternik--Schnirelmann category of symmetric products of topological spaces, specifically finite CW complexes. The spaces of our particular interest are the symmetric products of closed $2$-manifolds (or surfaces), which are closed even-dimensional manifolds. 

Our first main result is the exact computation of the sequential topological complexities of the symmetric products of closed orientable surfaces $M_g$ of genus $g\ge 0$. The precise statement of our main result is as follows.

\begin{swtheorem}\label{aa}
For any integers $n,m \ge 2$ and $g\ge 0$, we have for the $m$-th sequential topological complexity of the $n$-th symmetric product of $M_g$ that
\[
\TC_m \left(SP^n(M_g)\right)=m\cat\left(SP^n(M_g)\right)=\begin{cases}
2mn & \text{ if } n\le g \\
mn+mg & \text{ if } n >g.
\end{cases}
\]
\end{swtheorem}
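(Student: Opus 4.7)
The plan is to match upper and lower bounds on both $\cat(SP^n(M_g))$ and $\TC_m(SP^n(M_g))$.

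\emph{Upper bounds.} The inequality $\TC_m(Y) \leq m \cdot \cat(Y)$ is standard. For $\cat(SP^n(M_g))$: when $n \leq g$, the dimension bound gives $\cat(SP^n(M_g)) \leq \dim_{\mathbb{R}} SP^n(M_g) = 2n$. When $n > g$, I plan to use the Abel--Jacobi map $SP^n(M_g) \to J(M_g) \cong T^{2g}$, which for $n \geq 2g - 1$ is a $\mathbb{C}P^{n-g}$-bundle by Riemann--Roch, together with fiber-bundle category inequalities (or known $\cat$ computations for projective bundles over tori), to conclude $\cat(SP^n(M_g)) \leq 2g + (n-g) = n+g$. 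The intermediate range $g < n < 2g - 1$ requires a finer stratified analysis of the Abel--Jacobi fibers.

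\emph{Lower bounds for $\cat$.} By Macdonald's description, $H^*(SP^n(M_g); \mathbb{Q})$ is generated by degree-one classes $\xi_i, \eta_i$ for $i=1,\ldots,g$ (pulled back from $H^1(M_g)$) together with a degree-two class $\eta$. The fundamental class of $SP^n(M_g)$ equals, up to a nonzero scalar, the product $\xi_1\eta_1\cdots\xi_g\eta_g \cdot \eta^{n-g}$ when $n \geq g$ (a nonzero cup product of length $n+g$), and $\xi_1\eta_1\cdots\xi_n\eta_n$ when $n \leq g$ (of length $2n$). The cup-length lower bound then gives $\cat(SP^n(M_g)) \geq \min(2n, n+g)$, matching the upper bound.

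\emph{The main obstacle: lower bound for $\TC_m$.} Establishing $\TC_m(SP^n(M_g)) \geq m \cdot \cat(SP^n(M_g))$ requires producing a product of that many zero-divisors in $H^*(SP^n(M_g)^m; \mathbb{Q})$ with nonzero value. A direct zero-divisor cup-length count falls short: the anticommutative vanishing of squares of odd-degree zero-divisors, combined with the total-degree constraint $\leq 2nm$, caps the classical cup-length bound strictly below $m \cdot \cat$. I therefore plan to invoke the sequential distributional topological complexity $\dTC_m$ (which satisfies $\dTC_m \leq \TC_m$) together with the new cohomological lower bounds on $\dTC_m$ developed in this paper, as advertised in the abstract. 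The goal is to show $\dTC_m(SP^n(M_g)) \geq m \cdot \cat(SP^n(M_g))$ via a distributional refinement of the cup-length inequality that captures the full top class $[SP^n(M_g)]^{\otimes m}$ of $H^*(SP^n(M_g)^m; \mathbb{Q})$ without being obstructed by the graded-commutative constraints that limit the classical bound.
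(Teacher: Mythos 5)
Your computation of $\cat(SP^n(M_g))$ by cup-length is correct, and for the $n>g$ upper bound the paper simply invokes the result of~\cite{Dr3} giving $\cat(M)\le\tfrac{1}{2}(\dim M+\cd\pi_1(M))$ (recorded as Theorem~\ref{oldknown}); your proposed Abel--Jacobi fibration route is plausible for $n\ge 2g-1$ but, as you acknowledge, leaves the range $g<n<2g-1$ unaddressed, so it is not a complete alternative.

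The substantive error is in your treatment of the $\TC_m$ lower bound. You assert that a direct $m$-th zero-divisor cup-length computation necessarily falls short of $m\cat$ because squares of odd-degree classes vanish. This is a misconception. The relevant zero-divisors live in $H^*\bigl((SP^n(M_g))^m;\Q\bigr)$, not in $H^*(SP^n(M_g);\Q)$, and the standard zero-divisor $\ov{a^*}=\sum_{i=1}^{m-1}\pi_i^*(a^*)-(m-1)\pi_m^*(a^*)$ associated to a degree-one class $a^*$ with $(a^*)^2=0$ satisfies $(\ov{a^*})^m=\pm(m-1)\,m!\,(a^*\otimes\cdots\otimes a^*)\ne 0$ over $\Q$, because after expanding, the only surviving monomials are those in which each projection appears exactly once. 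Taking the product $(\ov{a_1^*})^m(\ov{b_1^*})^m\cdots(\ov{a_n^*})^m(\ov{b_n^*})^m$ for $n\le g$, or $(\ov{a_1^*})^m(\ov{b_1^*})^m\cdots(\ov{a_g^*})^m(\ov{b_g^*})^m(\ov{c^*})^{m(n-g)}$ for $n>g$, produces a nonzero multiple of the top class of $(SP^n(M_g))^m$ by Macdonald's relations, so $zc\ell^m_{\Q}(SP^n(M_g))\ge m\cat(SP^n(M_g))$ and the classical bound is already sharp. This is exactly the paper's argument. Your proposed detour through $\dTC_m$ cannot rescue the situation anyway: the only cohomological lower bound for $\dTC_m$ developed in the paper (Corollary~\ref{dd1}) is $zc\ell^m_{\Q}(X)\le\dTC_m(X)$, i.e.\ the very same zero-divisor cup-length you dismissed, and since $\dTC_m\le\TC_m$ any lower bound obtained this way is no stronger than applying $zc\ell^m_\Q$ to $\TC_m$ directly. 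There is no ``distributional refinement'' that sees more of the top class than the classical cup-length does; indeed the paper's Corollary~\ref{dd2} \emph{deduces} $\dTC_m(SP^n(M_g))=\TC_m(SP^n(M_g))$ from Theorem~\ref{aa}, not the other way around.
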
 

To prove Theorem~\ref{aa}, we use the description of the rational cohomology ring of $SP^n(M_g)$ due to Macdonald~\cite{Mac}, and we show that for each $m\ge 2$, the cohomological lower bound to $\TC_m(SP^n(M_g))$ coincides with the LS-categorical upper bound, thereby determining $\TC_m(SP^n(M_g))$. See Section~\ref{lstcdefn} for some preliminary facts on the bounds to sequential topological complexities.

The sequential topological complexities of orientable surfaces were computed in \cite{Ru},~\cite{BGRT} and~\cite{LS}, and~\cite{GGGHMR}. The topological complexity ($\TC_2$) of symmetric products of orientable surfaces was computed very recently in~\cite{DDJ}. Theorem~\ref{aa} extends the work of~\cite{DDJ} by completing the computation of all the sequential topological complexities of all symmetric products of orientable surfaces. Furthermore, it verifies the \emph{rationality conjecture}~\cite{FO} on the \emph{$\TC$-generating function} of the symmetric products of orientable surfaces. 

The rationality conjecture in its original form can be stated as follows.

\begin{conjec}[\protect{Farber--Oprea}]\label{foconjec}
    For a finite CW complex $X$, the formal power series 
   \[
   f_X(t)=\sum_{m=1}^{\infty} \TC_{m+1}(X)\hspace{1mm}t^m
   \]
   of the $\TC$-generating function of $X$ represents a rational function of the form 
   \[
   \frac{P_X(t)}{(1-t)^2},
   \]
   where $P_X(t)$ is an integral polynomial with $P_X(1)=\cat(X)$. 
\end{conjec}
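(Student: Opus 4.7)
The plan is to translate the rationality conjecture into an asymptotic statement about the sequence $\{\TC_m(X)\}_{m\ge 2}$. A direct power-series computation shows that $f_X(t)=P_X(t)/(1-t)^2$ with $P_X\in\Z[t]$ and $P_X(1)=\cat(X)$ if and only if there exist integers $c_0\ge 0$ and $m_0\ge 1$ such that
\[
\TC_m(X) = m\cdot\cat(X) - c_0 \quad \text{for all } m\ge m_0.
\]
Equivalently, the first differences $d_m:=\TC_{m+1}(X)-\TC_m(X)$ are eventually constant with limit $\cat(X)$, and the polynomial $P_X(t)$ records the finitely many deviations from this linear pattern. So the conjecture splits into two tasks: a linear upper bound of slope $\cat(X)$, and eventual stabilisation of the non-negative integer defect $m\cdot\cat(X)-\TC_m(X)$.

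For the upper bound, I would work with the Ganea--Schwarz model $\TC_m(X)=\secat(\pi_m)$, where $\pi_m:X^I\to X^m$ evaluates a path at $m$ equally spaced times. The standard estimate $\TC_m(X)\le \cat(X^m)\le m\cdot\cat(X)$ supplies the linear ceiling and shows that the defect is non-negative. A sub-additivity of the form $\TC_{m+1}(X)\le \TC_m(X)+\cat(X)$ can be produced geometrically by taking the product of an optimal sectioned cover of $X^m$ (realising $\TC_m(X)$) with an optimal categorical cover of $X$; this renders the defect sequence monotone non-decreasing, reducing eventual constancy to showing that the defect is uniformly bounded.

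For the lower bound, I would deploy cohomological invariants: $m$-th zero-divisor cup-lengths in integral and in twisted or equivariant cohomology, together with the category weight of Fadell--Husseini and Rudyak and its sectional-category refinement. These all furnish linearly growing lower bounds, but typically with leading coefficient equal to cup-length rather than $\cat(X)$. The plan is to upgrade these via the weight filtration to attain asymptotic slope $\cat(X)$ on the subclass of spaces where cohomological category equals $\cat$ (formal spaces, rationally elliptic or hyperbolic complexes, nilmanifolds, products of co-H-spaces, etc.) and then to try to bootstrap to arbitrary finite CW complexes.

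The genuinely hard step, and the reason this conjecture remains open in full generality, is that for an arbitrary finite CW complex there is no known cohomological invariant that matches $\cat(X)$ as the asymptotic slope of $\TC_m(X)$; the multiplicativity $\cat(X^m)=m\cdot\cat(X)$ itself is a delicate and not universally established fact. A realistic intermediate target is thus the rational case, where Sullivan-model calculations in the spirit of F\'elix--Halperin--Thomas and Hess give clean asymptotic formulas for $\cat(X^m_{\Q})$ and $\TC_m(X_{\Q})$ and would settle the rational version of the conjecture. Passing from the rational statement to the integral one would then require controlling torsion contributions to both invariants and ruling out small-scale oscillations of $d_m$ before the stabilisation threshold $m_0$ via a pigeonhole argument on the finite combinatorial data of open covers of $X^m$ with prescribed cohomological weight. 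This last integrality step is where I expect the argument to be most fragile, and any successful proof will likely require a genuinely new bridge between the weight filtration and the stabilisation of sectional-category defects.
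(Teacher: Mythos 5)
What you have written is not a proof but a research programme, and it targets a statement that the paper itself presents only as a conjecture and never proves in this generality. Indeed, the paper records that the normalisation $P_X(1)=\cat(X)$ has been \emph{disproven} for general finite CW complexes by Farber--Kishimoto--Stanley~\cite{FKS} using an $11$-dimensional complex, so your plan to establish asymptotic slope $\cat(X)$ for arbitrary finite CW complexes cannot succeed: the eventual slope of $m\mapsto\TC_m(X)$ need not equal $\cat(X)$. What the paper actually does is verify the conjecture for the particular closed manifolds $SP^n(M_g)$ (Corollary~\ref{aa1}): Theorem~\ref{aa} computes $\TC_m(SP^n(M_g))=m\cat(SP^n(M_g))$ exactly for every $m\ge 2$, after which the generating function is summed in one line to $(2t-t^2)\cat(SP^n(M_g))/(1-t)^2$. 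Your opening reduction --- that rationality with the stated normalisation is equivalent to the differences $\TC_{m+1}(X)-\TC_m(X)$ being eventually constant with value $\cat(X)$ --- is correct, but it is a restatement of the conjecture, not progress towards it.

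Beyond the global obstruction, the individual steps contain concrete gaps. The subadditivity $\TC_{m+1}(X)\le\TC_m(X)+\cat(X)$ that you propose to extract from a product of covers is stronger than what that construction yields: writing $X^{m+1}=X^m\times X$, the new endpoint must be \emph{connected by a path} to the old configuration, which requires a motion planner on $X\times X$ rather than a categorical cover of $X$; the known bound is $\TC_{m+1}(X)\le\TC_m(X)+\cat(X\times X)\le\TC_m(X)+2\cat(X)$ (\cite{BGRT}, Proposition 3.7, quoted in Section~\ref{observe}). With ceiling $2\cat(X)$ on the increments, your monotone-defect argument no longer pins the slope to $\cat(X)$. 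The remaining ingredients --- upgrading zero-divisor cup-lengths or category weight to achieve slope $\cat(X)$, the multiplicativity $\cat(X^m)=m\cat(X)$ (which fails in general by Iwase-type examples, cf.\ the discussion of the logarithmic law in Section~\ref{observe}), and the passage from rational to integral invariants --- are each open problems that you explicitly defer rather than resolve. As it stands, nothing here constitutes a proof of the statement, and the statement itself is false in the generality you aim at.
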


This conjecture is known for only a few classes of finite CW complexes and closed manifolds (see Section~\ref{tcm}), and its second part, which states that $P_X(1)=\cat(X)$, has recently been disproven in~\cite{FKS} using a finite $11$-dimensional CW complex. However, the original conjecture is still open for closed manifolds. 

Using Theorem~\ref{aa}, we show that Conjecture~\ref{foconjec} is verified for the symmetric products of orientable surfaces (and for their finite products --- see Section~\ref{products section}). This extends the class of closed manifolds for which the rationality conjecture holds. 

\begin{swcor}\label{aa1}
For any integers $n\ge 2$ and $g\ge 0$, the formal power series of the $\TC$-generating function of the closed orientable $2n$-manifold $SP^n(M_g)$ represents the rational function
\[
   \frac{(2t-t^2)\cat(SP^n(M_g))}{(1-t)^2}.
\]
\end{swcor}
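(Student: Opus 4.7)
The plan is to derive the formula by direct substitution into the definition of $f_{SP^n(M_g)}$. Setting $c := \cat(SP^n(M_g))$, Theorem~\ref{aa} gives $\TC_{m+1}(SP^n(M_g)) = (m+1)c$ for every integer $m \ge 1$. Therefore
\[
f_{SP^n(M_g)}(t) \;=\; \sum_{m=1}^{\infty} (m+1)c\, t^m \;=\; c\left(\sum_{m=1}^{\infty} m t^m + \sum_{m=1}^{\infty} t^m\right).
\]
Using the standard closed forms $\sum_{m\ge 1} m t^m = t/(1-t)^2$ and $\sum_{m\ge 1} t^m = t/(1-t)$, the bracketed sum evaluates to
\[
\frac{t}{(1-t)^2} + \frac{t(1-t)}{(1-t)^2} \;=\; \frac{2t - t^2}{(1-t)^2},
\]
and multiplying by $c$ yields the rational function claimed in the corollary.

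The argument is purely formal once Theorem~\ref{aa} is available, so I do not foresee any real obstacle: the entire content of the corollary is the linearity $\TC_m = m\cat$ supplied by Theorem~\ref{aa}, combined with a one-line geometric-series manipulation. It is worth recording, as a side remark, that the resulting numerator $P_X(t) = (2t-t^2)\cat(SP^n(M_g))$ is an integral polynomial satisfying $P_X(1) = \cat(SP^n(M_g))$, so Conjecture~\ref{foconjec} is verified for $SP^n(M_g)$ in its original form, including the identification $P_X(1) = \cat(X)$ that fails in the counterexample of~\cite{FKS}.
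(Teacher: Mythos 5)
Your proof is correct and follows essentially the same route as the paper: both invoke Theorem~\ref{aa} to get $\TC_{m+1}=(m+1)\cat(SP^n(M_g))$ and then carry out an elementary power-series computation (the paper differentiates $\sum t^{m+1}$ where you split into $\sum mt^m + \sum t^m$, but this is an immaterial difference).
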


The notions of \emph{LS-logarithmic} and \emph{$\TC_m$-logarithmic} families of spaces, on whose finite sub-families the respective product inequalities for $\cat$ and $\TC_m$ are equalities, were introduced in~\cite{AGO} to (in part) aid in the study of $\TC$-generating functions (see Definition~\ref{tclogdefn}). It turns out that the families of symmetric products of closed orientable surfaces are both LS-logarithmic and $\TC_m$-logarithmic for each $m\ge 2$.

\begin{swcor}\label{aa2}
        The family $\{SP^n(M_g)\}_{n\ge 1,g\ge 0}$ is LS-logarithmic. In particular, we have for all $m,n\ge 1$ and $g\ge 0$ that
    \[
    \cat((SP^n(M_g))^m)=m\cat(SP^n(M_g)).
    \]
\end{swcor}

In~\cite{LS}, $\TC_m(X)=\cat(X^{m-1})$ was shown for $X$ an $H$-space, and examples of spaces $X$ for which $\TC_m(X)=\cat(X^m)$ holds for each $m\ge 2$ were asked. Due to our results, the manifolds $SP^n(M_g)$ serve as such examples for all $n$ and $g$, except for the case $n=g=1$ when the manifold is the $2$-torus, which is an $H$-space.

Another consequence of (the proof of) Theorem~\ref{aa} is the following.

\begin{swcor}\label{aa3}
        The family $\{SP^n(M_g)\}_{n\ge 2,g\ge 0}$ is $\TC_m$-logarithmic for all $m\ge 2$. In particular, we have for all $m,n\ge 2$, $k\ge 1$, and $g\ge 0$ that
    \[
    \TC_m((SP^n(M_g))^k)=k\TC_m(SP^n(M_g)).
    \]
\end{swcor}

As noted in~\cite[Page 3]{AGO}, families of spaces that are both LS-logarithmic and $\TC_m$-logarithmic for all $m\ge 2$, and whose members $X$ satisfy $\TC_m(X)=m\cat(X)$, are rare. Our results show that symmetric products of orientable surfaces form such a family, whose any finite sub-family satisfies the hypotheses of~\cite[Theorem~1.5]{AGO}, thereby giving computations for $\TC_m$ of some \emph{general polyhedral product spaces}.

After studying the symmetric products of orientable surfaces, we turn towards the symmetric products of closed non-orientable surfaces $N_g$ of genus $g\ge 1$, which, compared to their orientable counterparts, have been studied significantly less in the literature. Our next main result is the exact computation of the LS-category of the symmetric products of non-orientable surfaces.

\begin{swtheorem}\label{bb}
    For any integers $n,g\ge 1$, we have for the Lusternik--Schnirelmann category of the $n$-th symmetric product of $N_g$ that
    \[
    \cat(SP^n(N_g))=2n.
    \]
\end{swtheorem}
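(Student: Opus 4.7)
The plan is to prove the matching bounds $2n \le \cat(SP^n(N_g)) \le 2n$.

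For the upper bound, I would use that $SP^n(N_g)$ is a closed connected topological $2n$-manifold: in local Euclidean charts on $N_g$, the symmetric group $S_n$ acts on $N_g^n$ by the standard permutation action on $\mathbb{C}^n$, and the quotient $\mathbb{C}^n/S_n$ is homeomorphic to $\mathbb{C}^n$ via elementary symmetric polynomials---the orientability of $N_g$ plays no role in this local analysis. Since LS-category is bounded above by covering dimension for path-connected CW complexes, we get $\cat(SP^n(N_g)) \le \dim(SP^n(N_g)) = 2n$.

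For the lower bound, rational cohomology is too sparse to detect cup-length $2n$ in the non-orientable setting, unlike in Theorem~\ref{aa}. I would therefore pass to $\mathbb{Z}/2$ coefficients and exploit the fact that $H^{*}(N_g;\mathbb{Z}/2)$ realizes cup-length two via a single degree-one class: for any $a \in H^1(N_g;\mathbb{Z}/2)$ associated to one of the $g$ crosscap summands in the connected-sum decomposition $N_g = \mathbb{R}P^2 \# \cdots \# \mathbb{R}P^2$, the square $a^2$ is the non-zero generator of $H^2(N_g;\mathbb{Z}/2)$. Applying the general cohomological lower bound for the LS-category of $SP^n(X)$ developed earlier in the paper (which, as announced in the abstract, expresses bounds in terms of the cohomology of $X$ and of its powers), one should obtain a class $A \in H^1(SP^n(N_g);\mathbb{Z}/2)$ built from $a$ whose $2n$-fold cup power is non-trivial, yielding $\cat(SP^n(N_g)) \ge 2n$.

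The principal obstacle is verifying that the contribution of $a$ is genuinely multiplicative by a factor of $n$ under passage to $SP^n$, i.e., that the relevant $2n$-fold cup product is non-zero in $H^{2n}(SP^n(N_g);\mathbb{Z}/2)$. Because the quotient $\pi \colon N_g^n \to SP^n(N_g)$ is a branched covering rather than a genuine covering, standard $\mathbb{Z}/2$ transfer techniques fail. If the general lower-bound machinery from the earlier sections already packages together the $\mathbb{Z}/2$-cup-length of $X$ with an $n$-fold multiplicativity factor under $SP^n$, the conclusion is immediate. Otherwise, I would trace $a$ through an explicit model for $H^{*}(SP^n(N_g);\mathbb{Z}/2)$---for instance via a Nakaoka/Dold--Thom analysis, or by extracting symmetric classes directly from the $S_n$-equivariant cohomology of $N_g^n$---and check that the resulting $A^{2n}$ coincides with the mod-$2$ fundamental class of $SP^n(N_g)$.
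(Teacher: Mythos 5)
Your high-level strategy---dimensional upper bound plus $\mathbb{Z}/2$-cup-length lower bound---matches the paper exactly, and your diagnosis that the whole proof hinges on exhibiting a nonzero $2n$-fold cup product in $H^*(SP^n(N_g);\mathbb{Z}_2)$ is correct. But you leave that central step unresolved: you offer two possible resolutions, and the first one does not work, while the second one is the one you would actually need and you do not carry it out.

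The first suggestion---that the ``general cohomological lower-bound machinery developed earlier in the paper'' might already hand you the $n$-fold multiplicativity---fails. Theorem~\ref{cc} says only that $c\ell_R(X)\le c\ell_R(SP^n(X))$. Applied here it yields $c\ell_{\mathbb{Z}_2}(SP^n(N_g))\ge c\ell_{\mathbb{Z}_2}(N_g)=2$, a bound independent of $n$; there is no ``multiplicativity by $n$'' built into that statement, and in fact no such multiplicativity can hold for general $X$ (take $X=S^1$, where $SP^n(S^1)\simeq S^1$ has cup-length $1$ for every $n$). The second suggestion---tracing a class through an explicit model of $H^*(SP^n(N_g);\mathbb{Z}_2)$---is exactly what the paper does, via the Kallel--Salvatore ring description (Theorem~\ref{mc2}): $H^*(SP^n(N_g);\mathbb{Z}_2)$ is generated by degree-one classes $e_1^*,\dots,e_g^*$ and a degree-two class $d^*$ with $(e_i^*)^2=d^*$, subject to $e_{i_1}^*\cdots e_{i_r}^*(d^*)^s=0$ precisely when $r+s\ge n+1$, together with the crucial fact from \cite[Lemma 19]{KS} that $(d^*)^n\neq 0$. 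From this the paper exhibits a nonzero product of $2n$ degree-one classes (using $(e_1^*)^2\cdots(e_n^*)^2=(d^*)^n$ when $n\le g$ and $(e_1^*)^{2n-2g+2}(e_2^*)^2\cdots(e_g^*)^2=(d^*)^n$ when $n>g$). Your guess that a single degree-one class $A$ with $A^{2n}\neq 0$ should suffice is in fact correct and even slightly simpler than the paper's case split: take $A=e_1^*$, so $A^{2n}=((e_1^*)^2)^n=(d^*)^n\neq 0$. But the nonvanishing $(d^*)^n\neq 0$ is a nontrivial input from Kallel--Salvatore, not something that follows formally, and your proposal never establishes it; as written, the argument has a genuine gap at precisely the point you identified as the ``principal obstacle.''
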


Theorem~\ref{bb} is proved by studying the $\Z_2$-cohomology ring of $SP^n(N_g)$, which was described by Kallel and Salvatore in~\cite{KS}. Basically, we show that the $\Z_2$-cup-length of $SP^n(N_g)$ coincides with the dimension of $SP^n(N_g)$. This leads to the following interesting result.

\begin{swcor}\label{bb2}
The family $\{SP^n(N_g)\}_{n\ge 1,g\ge 1}$ is LS-logarithmic. In particular, we have for all $m,n,g\ge 1$ that
    \[
    \cat((SP^n(N_g))^m)=m\cat(SP^n(N_g))=2mn.
    \]
\end{swcor}

The LS-category of the symmetric products of orientable surfaces, as mentioned in Theorem~\ref{aa}, is determined using some results of~\cite{DDJ} and~\cite{Dr3} (see Section~\ref{cohomg}). Hence, Theorem~\ref{bb} completes the computation of the LS-category of all symmetric products of all closed surfaces, thereby giving a complete list of closed surfaces $S$ for which $SP^n(S)$ is \emph{essential}, in the sense of Gromov~\cite{Gr} (see Section~\ref{lstcsect}).

\begin{swcor}\label{bb1}
    Given a closed surface $S$ and an integer $n\ge 1$, the $n$-th symmetric product $SP^n(S)$ of $S$ is an essential $2n$-manifold if and only if either $S$ is an orientable surface of genus $\ge n$ or $S$ is a non-orientable surface of genus $\ge 1$.
\end{swcor}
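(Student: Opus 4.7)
The plan is to deduce the corollary from the LS-category computations of \theoref{aa} and \theoref{bb}, combined with the standard cohomological characterization of essentiality. Recall that a closed $d$-manifold $M$ is essential (in the sense of Gromov) precisely when the classifying map $u_M \colon M \to K(\pi_1 M, 1)$ is non-zero on top-dimensional homology (with $\Z$-coefficients if $M$ is orientable and $\Z_2$-coefficients otherwise). By the classical category-weight argument, essentiality forces $\cat(M) = \dim M$; conversely, the existence of degree-one cohomology classes $\alpha_1, \ldots, \alpha_d$ whose cup product is non-zero in $H^d(M)$ is already sufficient for essentiality, because degree-one classes factor canonically through $u_M$.

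For the negative direction, the only surfaces excluded by the dichotomy in the statement are $S = M_g$ with $g < n$. When $n \ge 2$, \theoref{aa} gives $\cat(SP^n(M_g)) = n + g < 2n = \dim(SP^n(M_g))$, so $SP^n(M_g)$ cannot be essential. When $n = 1$, the only excluded case is $S = M_0 = S^2$, which is simply connected and therefore trivially non-essential.

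For the positive direction with $n \ge 2$, I would argue as follows. When $S = M_g$ with $g \ge n$, the lower-bound portion of the proof of \theoref{aa} exhibits a non-vanishing $2n$-fold cup product of degree-one classes in $H^*(SP^n(M_g); \Q)$, using Macdonald's description~\cite{Mac} of this ring as containing an exterior subalgebra on the $2g$ degree-one generators pulled back from $H^1(M_g; \Q)$; since these classes lie in the image of $u_{SP^n(M_g)}^*$, essentiality follows. When $S = N_g$ with $g \ge 1$, the lower-bound portion of the proof of \theoref{bb} produces, via the Kallel--Salvatore description~\cite{KS} of $H^*(SP^n(N_g); \Z_2)$, a non-vanishing $2n$-fold cup product of degree-one $\Z_2$-classes, which yields essentiality in the $\Z_2$ sense appropriate for non-orientable closed manifolds. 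For $n = 1$, the positive cases reduce to the classical fact that every closed surface other than $S^2$ is essential.

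The main obstacle is verifying that the top cup products realizing $\cat = 2n$ in the proofs of \theoref{aa} and \theoref{bb} can be taken to involve only degree-one classes, rather than higher-degree generators of the respective cohomology rings. This is not automatic from the value of $\cat$ alone, but it follows from the explicit Macdonald and Kallel--Salvatore structure results, since the degree-two generators in these rings are themselves expressible as cup products of degree-one classes (reflecting the fact that the $\Q$- or $\Z_2$-fundamental class of the underlying surface is a sum of squares or wedges of degree-one classes).
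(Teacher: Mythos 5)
Your argument is correct, and it takes a route that partially diverges from the paper's. The paper applies the Katz--Rudyak characterization (essential $\Leftrightarrow$ $\cat(M)=\dim(M)$ for a closed manifold) in both directions, reading the answer off directly from the LS-category computations of Theorems~\ref{oldknown} and~\ref{bb}. You use only the ``essential $\Rightarrow$ $\cat=\dim$'' direction to dispose of the non-essential cases $g<n$ (orientable), and you replace the converse direction with an explicit cohomological witness: a non-vanishing $2n$-fold cup product of degree-one classes, which, because $H^1$ always factors through the classifying map $u_M$, immediately forces $u_{M*}[M]\neq 0$. This is a genuine but modest variation: it is more self-contained on the positive side (avoiding the harder implication in Katz--Rudyak), at the cost of checking that the maximal cup products in Proposition~\ref{cup1}\,(1) and Theorem~\ref{mc2} are products of degree-one classes. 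That check does go through exactly as you indicate --- $a_i^*, b_i^*$ and $e_i^*$ are degree one, and the degree-two generators $c^*, d^*$ never need to appear in those particular witnesses --- and the coefficients ($\Z$ or $\Q$ in the orientable case, $\Z_2$ in the non-orientable case) match the ones relevant to essentiality. One small wording quibble: what you label ``conversely'' after the category-weight remark is not the converse of that implication but an independent sufficient criterion; this doesn't affect the correctness.
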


In this paper, more generally, we study lower bounds to $\TC_m(SP^n(X))$ and $\cat(SP^n(X))$ for finite CW complexes $X$. 

While the LS-category of products of a space $Y$ typically gives sharp bounds to $\TC_m(Y)$, the most useful lower bounds to $\TC_m(Y)$ come from the cohomology of $Y$ and $Y^m$. Indeed, for any ring $R$, the $m$-th $R$-zero-divisor cup-length of $Y$, denoted $\zcl^m_R(Y)$, is a lower bound to $\TC_m(Y)$. We recall that $\zcl_R^m(Y)$ is the cup-length of the kernel of the homomorphism induced in $R$-cohomology by the diagonal map $\Delta_m:Y\to Y^m$. If $Y$ is a symmetric product $SP^n(X)$ of a space $X$, which is the regime of our interest, then its cohomology ring can be complicated. In particular, computing the $m$-th $R$-zero-divisor cup-length of $SP^n(X)$ can be quite difficult. Therefore, we want to provide a useful lower bound to $\TC_m(SP^n(X))$ in terms of the cohomology of $X$ and $X^m$. 

For this purpose, a natural candidate could be $\zcl_{R}^m(X)$. However, we do not know whether $\zcl^m_{R}(X)$ is a lower bound to $\TC_m(SP^n(X))$ in general. So, instead of considering all the $m$-th $R$-zero-divisors of $X$, we consider only those ``special'' zero-divisors that are obtained from cohomology classes of $X$ via some homomorphisms induced in cohomology using the projection maps $\pi_i:X^m\to X$, see Definition~\ref{szcldefn}. We denote by $\szcl^m_R(X)$ the length of the longest non-trivial cup product of such \emph{special zero-divisors} of $X$. Of course, $\szcl^m_R(X)\le \zcl^m_R(X)$.

It turns out that for any finite CW complex $X$, the rational cup-length and the $m$-th rational special zero-divisor cup-length of $X$ bound $\cat(SP^n(X))$ and $\TC_m(SP^n(X))$, respectively, from below. In fact, they offer sharp lower bounds to some other invariants of interest that bound $\cat(SP^n(X))$ and $\TC_m(SP^n(X))$ from below. This observation is encapsulated in the following statement.

\begin{swtheorem}\label{cc}
    For any finite CW complex $X$, ring $R$, and integer $n\ge 1$, we have that $\cu_R(X)\le \cu_R(SP^n(X))$ and $\cu_{\Q}(X)= \cu_{\Q}(\textup{Im}(\delta_n^*))$. Moreover, for any integer $m\ge 2$, we have that $\szcl^m_{\Q}(\textup{Im}((\delta_n^m)^*))=\szcl^m_{\Q}(X)\le \szcl_{\Q}^m(SP^n(X))$.
\end{swtheorem}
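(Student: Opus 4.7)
The plan centers on the diagonal inclusion $\delta_n\colon X \to SP^n(X)$, $x \mapsto [x, \ldots, x]$, which factors as $\delta_n = \pi_n \circ \Delta_n$ via the quotient $\pi_n\colon X^n \to SP^n(X)$ and the classical diagonal $\Delta_n\colon X \to X^n$; and, for the coefficient-ring-free claim, on the basepoint inclusion $\iota_n\colon X \hookrightarrow SP^n(X)$, $x \mapsto [x, *, \ldots, *]$. The key technical input is surjectivity of $\delta_n^*$ over $\Q$: since $|S_n|$ is invertible rationally, the transfer identifies $\pi_n^*$ with an isomorphism $H^*(SP^n(X); \Q) \xrightarrow{\cong} H^*(X^n; \Q)^{S_n}$. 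For $\alpha \in H^*(X; \Q)$, the symmetrized class $\sum_{j=1}^{n} \pr_j^*(\alpha)$ is $S_n$-invariant, so there is a unique $\bar{\alpha} \in H^*(SP^n(X); \Q)$ with $\pi_n^*(\bar{\alpha}) = \sum_j \pr_j^*(\alpha)$, and then $\delta_n^*(\bar{\alpha}) = \Delta_n^*\bigl(\sum_j \pr_j^*(\alpha)\bigr) = n\alpha$. Hence $\textup{Im}(\delta_n^*) = H^*(X; \Q)$, giving $c\ell_{\Q}(\textup{Im}(\delta_n^*)) = c\ell_{\Q}(X)$.

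For the inequality $c\ell_R(X) \le c\ell_R(SP^n(X))$ with arbitrary coefficients, I would instead use $\iota_n$. Its composition with $SP^n(X) \hookrightarrow SP^\infty(X)$ is the standard Dold--Thom inclusion, which induces a surjection on $H^*(-; R)$ because $SP^\infty(X)$ is the free topological abelian monoid on $X$ up to homotopy: every map $X \to K(G, k)$, itself an abelian monoid, extends to $SP^\infty(X) \to K(G, k)$, so every cohomology class on $X$ lifts. Factoring through $SP^n(X)$ makes $\iota_n^*$ surjective as well. Given $\alpha_1 \cdots \alpha_k \ne 0$ in $\tilde{H}^*(X; R)$, lift each $\alpha_i$ to $\beta_i \in H^*(SP^n(X); R)$ with $\iota_n^*(\beta_i) = \alpha_i$; since $\iota_n^*$ is a ring map, $\iota_n^*(\beta_1 \cdots \beta_k) = \alpha_1 \cdots \alpha_k \ne 0$, forcing $\beta_1 \cdots \beta_k \ne 0$.

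The zero-divisor statement follows by applying the same ideas to $m$-fold products. K\"unneth gives $(\delta_n^m)^* = (\delta_n^*)^{\otimes m}$, surjective over $\Q$, so $\textup{Im}((\delta_n^m)^*) = H^*(X^m; \Q)$ and $zc\ell^m_{\Q}(\textup{Im}((\delta_n^m)^*)) = zc\ell^m_{\Q}(X)$ is immediate. For $zc\ell^m_{\Q}(X) \le zc\ell^m_{\Q}(SP^n(X))$, I would chase the naturality square formed by $(\delta_n^m)^*$ with the diagonals $\Delta_m$ of $X$ and $\Delta'_m$ of $SP^n(X)$. Given $\alpha \in \ker(\Delta_m^*) \subset H^*(X^m; \Q)$, pick any preimage $\tilde{\alpha}_0$ under $(\delta_n^m)^*$; then $\beta := (\Delta'_m)^*(\tilde{\alpha}_0)$ lies in $\ker(\delta_n^*)$ by commutativity. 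Correcting $\tilde{\alpha}_0$ by $\beta \otimes 1 \otimes \cdots \otimes 1$ (which satisfies $(\Delta'_m)^*(\beta \otimes 1 \otimes \cdots) = \beta$ and $(\delta_n^m)^*(\beta \otimes 1 \otimes \cdots) = \delta_n^*(\beta) \otimes 1 \otimes \cdots = 0$) produces $\tilde{\alpha} \in \ker((\Delta'_m)^*)$ still mapping to $\alpha$. Hence the restriction of $(\delta_n^m)^*$ to zero-divisor ideals is surjective, transporting cup lengths.

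The main obstacle is coefficient-ring independence in the first inequality: the surjectivity $H^*(SP^\infty(X); R) \to H^*(X; R)$ for arbitrary $R$ relies on the Dold--Thom universal property of the free abelian topological monoid and requires some care with basepoints, though it is well-known. Everything else reduces to the standard symmetrization trick available over $\Q$ and to diagram chasing.
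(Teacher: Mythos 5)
Your proof is correct, and while it tracks the same overall skeleton as the paper's---surjectivity of the basepoint inclusion in $R$-cohomology for $c\ell_R(X)\le c\ell_R(SP^n(X))$, surjectivity of $\delta_n^*$ over $\Q$ for the equality, K\"unneth to identify $(\delta_n^m)^*$ with $(\delta_n^*)^{\otimes m}$, and a diagram chase for the zero-divisor part---it differs at two substantive points. First, where the paper imports the Nakaoka--Dold theorem (Theorem~\ref{split1}) and a proposition from Dranishnikov--Jauhari (Proposition~\ref{split2}) as black boxes, you give self-contained arguments: the Dold--Thom universal property (any pointed $X\to K(G,k)$ extends monoidally over $SP^{\infty}(X)$, once $K(G,k)$ is modeled as a topological abelian monoid) for the former, and the transfer isomorphism $H^*(SP^n(X);\Q)\cong H^*(X^n;\Q)^{S_n}$ with $\delta_n^*(\bar{\alpha})=n\alpha$ for the latter. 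Second, the paper's zero-divisor step appeals to commutativity of diagram~\eqref{threes} \emph{including} the sections, i.e., $\pa_m^*\circ s_m=\ov{s}\circ\Delta_m^*$; but this is not automatic for arbitrary vector-space splittings, since a section to $\delta_n^*$ need not be a ring map and thus $s_m$ need not carry zero-divisors to zero-divisors, and the outer (naturality) square only yields $\pa_m^*(s_m(\beta_i))\in\textup{Ker}(\delta_n^*)$, not $=0$. Your correction step---replacing a preimage $\tilde{\alpha}_0$ of a zero-divisor by $\tilde{\alpha}_0-(\pa_m^*\tilde{\alpha}_0)\otimes 1\otimes\cdots\otimes 1$, which lies in $\textup{Ker}(\pa_m^*)$ and still maps to the original class precisely because $\pa_m^*\tilde{\alpha}_0\in\textup{Ker}(\delta_n^*)$---supplies exactly the argument that makes the diagram chase rigorous. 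Both routes prove the same inequalities; yours is more self-contained and more careful on that last point.
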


Here, $\delta_n:X\to SP^n(X)$ is the diagonal embedding defined as $\delta_n(x)=[x,\ldots,x]$, $\delta_n^m:X^m\to (SP^n(X))^m$ is the product map defined as 
\[
\delta_n^m(x_1,\ldots,x_m)=\left(\delta_n(x_1),\ldots,\delta_n(x_m)\right),
\]
and $\szcl^m_{\Q}(\textup{Im}(\left(\delta_n^m\right)^*))$ is the length of the longest non-trivial cup product of those $m$-th rational special zero-divisors of $X$ that belong to the subring $\textup{Im}(\left(\delta_n^m\right)^*)$.

There is another lower bound to the $m$-th sequential topological complexity of a space, namely its $m$-th sequential distributional complexity $(\dTC_m)$, which is a new homotopy invariant of probabilistic flavor that also has some very interesting interpretations in terms of motion planning, see~\cite{DJ},~\cite{KW},~\cite{J1}. 

While studying lower bounds to sequential distributional complexities of CW complexes in~\cite{DJ}, symmetric products of these spaces and their cohomology rings came into the picture. More precisely, some general cohomological lower bounds to $\dTC_m(X)$ were found in~\cite{J1} (see~\cite{DJ} for $m=2$) in terms of the cohomology of $SP^n(X)$ and $SP^n(X^m)$ using the diagonal embedding $X^m\to SP^n(X^m)$. So, it becomes very tempting to explore relationships between lower bounds to the $m$-th sequential distributional complexity of spaces and the $m$-th sequential topological complexity of their diagonal embeddings into symmetric products for each $m\ge 2$. In that direction, we show that these two homotopy invariants share a common cohomological lower bound, namely the $m$-th rational special zero-divisor cup-length of the space. We do this by finding sharp lower bounds to sequential distributional complexities of continuous maps. The precise statement of our general result in Alexander--Spanier cohomology is as follows.

\begin{swtheorem}\label{dd}
Let $f:X\to Y$ be a continuous map, $m\ge 2$, and $R$ be a ring with unity. For $1 \le i \le n$ and some integers $k_i\ge 1$, let $\alpha_i^*\in H^{k_i}((SP^{n!}(Y))^m;R)$ be $m$-th $R$-zero-divisors of $SP^{n!}(Y)$. Let $\alpha_i\in H^{k_i}(X^m;R)$ be their images under $(\delta_{n!}^m\circ f^m)^*:H^*((SP^{n!}(Y))^m;R)\to H^*(X^m;R)$. If $\alpha_1\smile\cdots\smile\alpha_n\ne 0$, then we have for the $m$-th sequential distributional complexity of the map $f$ that
\[
\dTC_m(f)\ge n.
\]
\end{swtheorem}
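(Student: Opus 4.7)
My plan is to prove the contrapositive: assume $\dTC_m(f)\le n-1$ and derive $\alpha_1\smile\cdots\smile\alpha_n=0$ in $H^*(X^m;R)$. By the Schwarz-style open-cover characterization of sequential distributional complexity established in~\cite{J1} and extended to continuous maps, the hypothesis produces an open cover $\{U_1,\ldots,U_n\}$ of $X^m$ together with a tame distributional motion planner on each $U_i$ for $f$. Applying the standard symmetrization trick, which replaces a weighted finite collection of atoms by an equally weighted $n!$-atom collection, each local section lifts to a continuous map $\sigma_i\colon U_i\to SP^{n!}(Y^I)$ satisfying $SP^{n!}(\pi_m)\circ \sigma_i=\tilde{\delta}_{n!}\circ (f^m)|_{U_i}$, where $\pi_m\colon Y^I\to Y^m$ is the Ganea-type fibration defining $\TC_m(Y)$ and $\tilde{\delta}_{n!}\colon Y^m\to SP^{n!}(Y^m)$ is the diagonal embedding.

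Next, I combine the $\sigma_i$ with the natural continuous map $\phi\colon SP^{n!}(Y^I)\to (SP^{n!}(Y))^I$ defined by $[\gamma_1,\ldots,\gamma_{n!}]\mapsto \bigl(t\mapsto [\gamma_1(t),\ldots,\gamma_{n!}(t)]\bigr)$ and with the coordinate-shuffle map $\pi\colon SP^{n!}(Y^m)\to (SP^{n!}(Y))^m$. A direct check yields the relations $\pi\circ SP^{n!}(\pi_m)=\pi_m^{SP^{n!}(Y)}\circ \phi$ and $\pi\circ \tilde{\delta}_{n!}=\delta_{n!}^m$. Hence each $s_i:=\phi\circ \sigma_i\colon U_i\to (SP^{n!}(Y))^I$ is a local section of the motion-planning fibration $\pi_m^{SP^{n!}(Y)}\colon (SP^{n!}(Y))^I\to (SP^{n!}(Y))^m$ over $\delta_{n!}^m\circ f^m|_{U_i}$. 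Thus the pullback of $\pi_m^{SP^{n!}(Y)}$ along $\delta_{n!}^m\circ f^m$ has sectional category at most $n-1$, and Schwarz's classical cup-length estimate for sectional category implies that every $n$-fold cup product of elements in $\mathrm{Im}\bigl((\delta_{n!}^m\circ f^m)^*\bigr)\cap \mathrm{Ker}\bigl((\pi_m^{SP^{n!}(Y)})^*\bigr)$ vanishes in $H^*(X^m;R)$.

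To conclude, I identify $\mathrm{Ker}\bigl((\pi_m^{SP^{n!}(Y)})^*\bigr)$ with the set of $m$-th $R$-zero-divisors of $SP^{n!}(Y)$ in $H^*((SP^{n!}(Y))^m;R)$, using the homotopy equivalence $(SP^{n!}(Y))^I\simeq SP^{n!}(Y)$ (evaluation at $0$) under which $\pi_m^{SP^{n!}(Y)}$ becomes homotopic to the diagonal $\Delta_m^{SP^{n!}(Y)}$. Each pullback $\alpha_i=(\delta_{n!}^m\circ f^m)^*(\alpha_i^*)$ of a zero-divisor therefore lies in that intersection, so $\alpha_1\smile\cdots\smile\alpha_n=0$, contradicting the hypothesis. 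The main technical obstacle is Step~1: rigorously justifying the extension of the open-cover characterization of $\dTC_m$ to the relative case of a continuous map $f\colon X\to Y$, together with the compatible symmetrization from finitely supported continuous families of probability measures on $Y^I$ into $SP^{n!}(Y^I)$. Once these foundational constructions are in place, the remainder of the proof is a direct application of Schwarz's cup-length bound, paralleling the analogous cohomological lower bounds for $\dTC_m$ in the absolute case~\cite{DJ,J1}.
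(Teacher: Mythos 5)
There is a genuine gap at the very first step of your proposal, and it is precisely the obstacle that drives the paper's actual argument.

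You claim an ``open-cover characterization'' of $\dTC_m$, deducing an open cover $\{U_1,\ldots,U_n\}$ of $X^m$ carrying local distributional planners. No such characterization exists. By definition $\dTC_m(f)<n$ gives a single continuous map $s_m:X^m\to\B_n(P(Y))$ (no cover at all). When one stratifies $X^m$ by the sets
\[
A_i=\bigl\{\ov{x}\in X^m\;:\;|\supp(s_m(\ov{x}))|=i\bigr\},\qquad 1\le i\le n,
\]
these sets cover $X^m$ and each admits the required lift into $SP^{n!}(P(Y))$ after the symmetrization you describe, but they are \emph{not open}. Support cardinality is lower semicontinuous in the L\'evy--Prokhorov topology, so $\{|\supp|\ge i\}$ is open, making $A_i$ only locally closed. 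Consequently you cannot invoke Schwarz's cup-length bound for sectional category --- that bound requires local sections over an \emph{open} cover, whereas here the sections only exist over the non-open sets $A_i$. Your proposal does not address how to pass from $A_i$ to genuine open neighborhoods while retaining the lifted sections, and in general this is not possible.

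The paper's proof deliberately sidesteps this. It works in Alexander--Spanier cohomology, where one can form the relative groups $H^{k_i}((SP^{n!}(Y))^m,A_i;R)$ for arbitrary (non-open) subsets and still run the long exact sequence. Each $\alpha_i^*$ restricts to zero over $A_i$ (by the pullback-square argument), hence lifts to a relative class $\ov{\alpha_i^*}$, and the $n$-fold relative cup product lands in $H^k(X^m,\bigcup_iA_i;R)=H^k(X^m,X^m;R)=0$, contradicting $\alpha_1\smile\cdots\smile\alpha_n\neq 0$. This is the Schwarz cup-length argument recast by hand in relative cohomology so that it applies to the non-open stratification; only after the theorem is proved in Alexander--Spanier cohomology does one specialize to finite CW complexes, where it coincides with singular cohomology (Corollary~\ref{dd1}). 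Your Steps~2--5 (symmetrization into $SP^{n!}(P(Y))$, the comparison map $r$, and the identification of $\mathrm{Ker}(\Delta_m^*)$ with the zero-divisors of $SP^{n!}(Y)$) align with the paper's diagrams~\eqref{two22} and~\eqref{three33}, so the substance missing is exactly the replacement of the open-cover/sectional-category framework with the relative Alexander--Spanier argument.
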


While a cohomological lower bound to $\dTC_m(f)$ can be readily obtained in terms of the cohomology of $SP^{k}(Y^m)$ by generalizing the proofs of~\cite[Section 4]{J1} from spaces to maps, the relevance of the lower bound obtained in Theorem~\ref{dd} is that in general, the cohomology of $(SP^{k}(Y))^m$ is simpler\hspace{0.2mm}\footnote{\hspace{0.5mm}To see this, take $Y$ to be a circle or a closed surface and use Künneth formulae --- see~\cite{Mac} and Section~\ref{cohomg} in the case of orientable surfaces, and~\cite{KS} and Section~\ref{cohong} otherwise.} than that of $SP^{k}(Y^m)$ when $m,k\ge 2$. Hence, Theorem~\ref{dd} makes it easier to bound $\dTC_m(f)$ from below. Furthermore, it helps obtain the following convenient lower bound to $\dTC_m(X)$.

\begin{swcor}\label{dd1}
For any finite CW complex $X$ and integer $m\ge 2$, we have for the $m$-th sequential distributional complexity of $X$ that $\szcl^m_{\Q}(X)\le \dTC_m(X)$.
\end{swcor}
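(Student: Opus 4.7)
The plan is to apply \theoref{dd} to the identity map $f = \id_X$ (so $Y = X$), with parameter $n := zc\ell^m_{\Q}(X)$. It then suffices to produce classes $\alpha_1^*,\ldots,\alpha_n^* \in H^*((SP^{n!}(X))^m;\Q)$ that are $m$-th $\Q$-zero-divisors of $SP^{n!}(X)$ and whose images $\alpha_i := (\delta_{n!}^m)^*\alpha_i^* \in H^*(X^m;\Q)$ satisfy $\alpha_1\smile\cdots\smile\alpha_n \ne 0$, for then \theoref{dd} yields $\dTC_m(X) \ge n$, which is exactly the required inequality.

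The raw material comes from \theoref{cc}. Since $zc\ell^m_{\Q}(\textup{Im}((\delta_{n!}^m)^*)) = zc\ell^m_{\Q}(X) = n$, there exist classes $a_1,\ldots,a_n \in \textup{Im}((\delta_{n!}^m)^*) \cap \text{Ker}(\Delta_m^*) \subseteq H^*(X^m;\Q)$ with $a_1\smile\cdots\smile a_n \ne 0$. Pick arbitrary lifts $\widetilde A_i \in H^*((SP^{n!}(X))^m;\Q)$ with $(\delta_{n!}^m)^*\widetilde A_i = a_i$. Writing $\widetilde\Delta_m$ for the diagonal $SP^{n!}(X) \to (SP^{n!}(X))^m$ and using commutativity of $\delta_{n!}^m \circ \Delta_m = \widetilde\Delta_m \circ \delta_{n!}$, the defect $\pi_i := \widetilde\Delta_m^*\widetilde A_i \in H^*(SP^{n!}(X);\Q)$ satisfies $\delta_{n!}^*\pi_i = \Delta_m^* a_i = 0$, so $\pi_i \in \text{Ker}(\delta_{n!}^*)$. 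Thus each $\widetilde A_i$ is a lift of $a_i$ which fails to be a zero-divisor of $SP^{n!}(X)$ only by this single class $\pi_i$.

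The key step is a K\"unneth-based correction. Under the rational K\"unneth isomorphism $H^*((SP^{n!}(X))^m;\Q) \cong H^*(SP^{n!}(X);\Q)^{\otimes m}$, the map $\widetilde\Delta_m^*$ is the iterated cup product and $(\delta_{n!}^m)^*$ is the $m$-fold tensor power of $\delta_{n!}^*$. Set $\alpha_i^* := \widetilde A_i - \pi_i \otimes 1 \otimes \cdots \otimes 1$. Because $\delta_{n!}^*\pi_i = 0$, the correction term lies in $\text{Ker}((\delta_{n!}^m)^*)$, so $(\delta_{n!}^m)^*\alpha_i^* = a_i$ is preserved; and $\widetilde\Delta_m^*(\pi_i \otimes 1 \otimes \cdots \otimes 1) = \pi_i \smile 1 \smile \cdots \smile 1 = \pi_i$, so $\widetilde\Delta_m^*\alpha_i^* = 0$. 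Each $\alpha_i^*$ is therefore a bona fide $m$-th $\Q$-zero-divisor of $SP^{n!}(X)$ with image $\alpha_i = a_i$, and \theoref{dd} concludes the proof. The one genuine obstacle is exactly this upgrade from a lift to a zero-divisor --- a naive lift need not respect the diagonal on $SP^{n!}(X)$, and the K\"unneth correction is the clean way to absorb the defect $\pi_i$ into $\text{Ker}((\delta_{n!}^m)^*)$ without disturbing the image under $(\delta_{n!}^m)^*$.
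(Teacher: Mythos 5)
Your proof is correct and follows the same overall route as the paper: apply Theorem~\ref{dd} to $f=\id_X$ once one has produced $m$-th rational zero-divisors $\alpha_i^*$ of $SP^{n!}(X)$ that map under $(\delta_{n!}^m)^*$ onto a maximal non-zero cup product of $m$-th rational zero-divisors of $X$. The difference lies in how the $\alpha_i^*$ are produced. The paper simply invokes the proof of Theorem~\ref{cc}, which obtains them as $s_m(\beta_i)$ via the sections $\overline{s}$, $s_m$ of Propositions~\ref{split2} and~\ref{split3} and the commutativity of diagram~\eqref{threes}. You instead take arbitrary preimages $\widetilde A_i$ under the surjection $(\delta_{n!}^m)^*$ and absorb the diagonal defect by subtracting the K\"unneth term $\pi_i\otimes 1\otimes\cdots\otimes 1$, where $\pi_i=\widetilde{\Delta}_m^*\widetilde A_i$; since $\delta_{n!}^*\pi_i=\Delta_m^*a_i=0$, this correction lies in $\text{Ker}((\delta_{n!}^m)^*)$, so it kills $\widetilde{\Delta}_m^*$ without disturbing the image under $(\delta_{n!}^m)^*$. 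Your ``lift and correct'' construction is self-contained and buys robustness: it does not rely on the sections $\overline{s}$ and $s_m$ being compatible with the diagonal pullbacks, which is the one point in the paper's argument that requires care. The only small thing worth adding is the remark, present in the paper's proof, that Theorem~\ref{dd} is stated in Alexander--Spanier cohomology, which coincides with the singular cohomology in which your K\"unneth manipulation lives since $X$ is a finite CW complex.
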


Corollary~\ref{dd1} subsumes the computations of $\dTC_m$ for orientable surfaces and products of spheres done in~\cite[Section 7]{J1}, which are rather long and complicated. It also gives the following for the symmetric products of closed orientable surfaces.

\begin{swcor}\label{dd2}
    For any integers $m\ge 2$, $n\ge 1$, and $g\ge 0$, we have the equalities $\dTC_m(SP^n(M_g))=\TC_m(SP^n(M_g))$ and $\dcat((SP^n(M_g))^m)=\cat((SP^n(M_g))^m).$
\end{swcor}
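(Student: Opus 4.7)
The plan is a two-step sandwich argument, invoking \theoref{aa}, \corref{aa2}, and \corref{dd1}, together with the universal inequalities between distributional and classical variants of the invariants.

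For the first equality, I would combine the universal comparison $\dTC_m(Y)\le \TC_m(Y)$ with the cohomological lower bound $zc\ell^m_{\Q}(Y)\le \dTC_m(Y)$ supplied by \corref{dd1}. The proof of \theoref{aa} computes $\TC_m(SP^n(M_g))$ by showing that the rational $m$-th zero-divisor cup-length already realizes the LS-categorical upper bound; that is, $zc\ell^m_{\Q}(SP^n(M_g)) = \TC_m(SP^n(M_g))$. Chaining the three relations yields
\[
\TC_m(SP^n(M_g)) = zc\ell^m_{\Q}(SP^n(M_g)) \le \dTC_m(SP^n(M_g)) \le \TC_m(SP^n(M_g)),
\]
forcing the first equality.

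For the second equality, set $Z = (SP^n(M_g))^m$. The universal bound gives $\dcat(Z)\le \cat(Z)$. For the reverse direction, I would invoke $\dTC_m(Y)\le \dcat(Y^m)$ --- the distributional analogue of the classical bound $\TC_m(Y)\le \cat(Y^m)$ --- applied to $Y=SP^n(M_g)$. Combining this with the first equality, \theoref{aa}, and \corref{aa2}, we obtain
\[
\cat(Z) = m\cat(SP^n(M_g)) = \TC_m(SP^n(M_g)) = \dTC_m(SP^n(M_g)) \le \dcat(Z) \le \cat(Z),
\]
and the second equality follows.

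The main obstacle is verifying $\dTC_m(Y)\le \dcat(Y^m)$ in the distributional setting. Its classical counterpart is standard --- a categorical contraction of an open set $U_i\subset Y^m$ to a basepoint $(y_0,\ldots,y_0)$, composed with fixed paths from $y_0$ to each coordinate, produces a continuous motion planner --- and I expect the construction to transfer verbatim to the probabilistic setting, since convex combinations of continuous sections remain continuous and push-forwards of measures compose as required. I would cite~\cite{DJ,J1} for the underlying probabilistic formalism, or, failing a direct reference, supply a short argument within it.
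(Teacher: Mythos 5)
Your proposal is correct and follows essentially the same route as the paper: sandwich $\dTC_m$ between $zc\ell^m_\Q$ (via Corollary~\ref{dd1}) and $\TC_m$, using the fact established in the proof of Theorem~\ref{aa} that $zc\ell^m_\Q(SP^n(M_g)) = \TC_m(SP^n(M_g))$, then chain the second equality through Corollary~\ref{aa2}. The one spot where you overcomplicate matters is the inequality $\dTC_m(Y)\le \dcat(Y^m)$, which you flag as a possible gap needing a new argument; in fact it is already recorded in Theorem~\ref{dtcbound} (take $f=\id_Y$, so $\dcat(f^m)=\dcat(Y^m)$), so a direct citation suffices and no verbatim transfer of the classical construction is required.
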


Some other results in this paper on the homotopy of the symmetric products of closed surfaces, which we now discuss, could be of interest on their own. 

The Dold--Thom theorem~\cite{DT} gives an isomorphism $\pi_i(SP^{\infty}(X))=\wt{H}_i(X;\Z)$ for each $i$, where $SP^{\infty}(X)$ is the infinite symmetric product of $X$. It is natural to ask if there is a choice of integer $n\ge 2$ such that the Dold--Thom isomorphism can be realized at the level of $SP^n(X)$. Indeed, for simply connected $X$, it was shown in~\cite{DP} that $\pi_i(SP^n(X))= \wt{H}_i(X;\Z)$ for $n>\tfrac{i-\text{conn}(X)}{2}$, where $\text{conn}(X)$ denotes the homotopy connectivity of $X$. In this paper, we address this question for when $X$ is a closed surface $S$ (that is not simply connected) by proving the following.

\begin{swtheorem}\label{ee}
Given a closed surface $S$ and integer $i\ge 1$, there is an isomorphism
\[
\pi_i(SP^n(S))=\wt{H}_i(S;\Z)
\]
for all $n>i$. In particular, the $i$-th homotopy group of the $n$-th symmetric product of $S$ vanishes for each $3\le  i  <  n$, $\pi_2(SP^n(M_g))=\Z$, and $\pi_2(SP^n(N_g))=0$.
\end{swtheorem}

The case $i=1$ in the above theorem is well-known, see, for example,~\cite{KT}. The case $i=2$ for $S=M_g$ was studied in~\cite{BR}, and more recently in~\cite{DDJ} using a cellular complex construction from~\cite{KS}. Here, we obtain Theorem~\ref{ee} using the Dold--Thom homotopy splitting~\cite{DT} of the infinite symmetric product of some finite $2$-dimensional CW complexes and the work of~\cite{KS}.  

In particular, Theorem~\ref{ee} implies that, while $M_{g-1}$ is the orientable double cover of $N_g$ for each $g\ge 1$, a finite cover of $SP^n(M_h)$ cannot be a finite cover of $SP^n(N_g)$ for any choice of integers $n\ge 3$, $g\ge 1$, and $h\ge 0$. 

In~\cite{DDJ}, it was shown that the universal cover of the orientable manifold $SP^n(M_g)$ admits a spin structure if and only if $n-g$ is odd. In contrast, as a result of Theorem~\ref{ee}, we conclude the following in the non-orientable setting.

\begin{swcor}\label{ee1}
For any integers $n\ge 3$ and $g\ge 1$, the universal cover $\wt{SP^n(N_g)}$ of the non-orientable manifold $SP^n(N_g)$ admits a spin structure.
\end{swcor}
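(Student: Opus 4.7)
The plan is to reduce the existence of a spin structure on $\wt{SP^n(N_g)}$ to a cohomology vanishing statement: namely, that $H^2(\wt{SP^n(N_g)};\Z_2)=0$, which forces the second Stiefel--Whitney class to be zero. Since spin structures exist on an orientable manifold precisely when $w_2$ vanishes, and since $\wt{SP^n(N_g)}$ is simply connected (hence automatically orientable, so $w_1=0$ is free), the corollary will follow at once.

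First I would note that the universal covering map $\wt{SP^n(N_g)} \to SP^n(N_g)$ induces isomorphisms on $\pi_i$ for every $i \ge 2$. Combined with Theorem~\ref{ee}, which asserts $\pi_2(SP^n(N_g)) = 0$ for $n \ge 3$ and $g \ge 1$, this yields
\[
\pi_1(\wt{SP^n(N_g)}) = 0 \quad \text{and} \quad \pi_2(\wt{SP^n(N_g)}) = 0,
\]
so the universal cover is $2$-connected. The Hurewicz theorem then gives $H_1(\wt{SP^n(N_g)};\Z) = 0$ and $H_2(\wt{SP^n(N_g)};\Z) \cong \pi_2(\wt{SP^n(N_g)}) = 0$.

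Next I would feed these into the universal coefficient theorem:
\[
H^2(\wt{SP^n(N_g)};\Z_2) \cong \Hom(H_2(\wt{SP^n(N_g)};\Z),\Z_2)\oplus \Ext(H_1(\wt{SP^n(N_g)};\Z),\Z_2) = 0.
\]
Since $w_2(\wt{SP^n(N_g)})$ lives in this trivial group, it must vanish, and a spin structure exists on the (orientable, simply connected) manifold $\wt{SP^n(N_g)}$.

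Honestly, there is no genuine obstacle in this argument once Theorem~\ref{ee} is in hand; the entire content of the corollary is the vanishing of $\pi_2(SP^n(N_g))$ for non-orientable $S = N_g$, which is where all the real work has already been done. The only small point worth double-checking is that the universal cover of a closed manifold is itself a manifold (so the notion of Stiefel--Whitney classes applies), which is standard since covering maps are local diffeomorphisms.
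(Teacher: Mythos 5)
Your argument is correct, and it rests on the same essential input as the paper, namely the vanishing of $\pi_2(SP^n(N_g))$ from Theorem~\ref{ee}. The paper's proof is shorter only because it invokes a ready-made criterion: a closed manifold $M$ has a spin universal cover if and only if $w_2(M)$ evaluates trivially on every spherical $\Z_2$-homology class of $M$; with $\pi_2(M)=0$ there are no nontrivial spherical classes and the criterion is vacuous. Your version unwinds that criterion from first principles: you observe that $\widetilde{SP^n(N_g)}$ is $2$-connected, then use Hurewicz and the universal coefficient theorem to force $H^2(\widetilde{SP^n(N_g)};\Z_2)=0$, hence $w_2$ of the (simply connected, so orientable) cover must vanish. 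The two routes are equivalent — in fact your computation is exactly what makes the paper's criterion work — but yours is more self-contained and does not require quoting a separate fact about spherical classes, at the modest cost of a few extra lines. Both implicitly use that the universal cover of a manifold is again a manifold, which you rightly flag; no gap.
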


This paper is organized as follows. In Section~\ref{Preliminaries}, we mention some facts about symmetric products, sequential topological complexity, and sequential distributional complexity. We study the cohomology of symmetric products of finite CW complexes in Section~\ref{secszcl}, where we define special zero-divisors and prove Theorem~\ref{cc}. In Section~\ref{appl}, we prove Theorem~\ref{dd} and Corollary~\ref{dd1}. We give a proof of our main result (Theorem~\ref{aa}) in Section~\ref{symorient}. In the subsections of this section, we also prove Corollaries~\ref{aa1},~\ref{aa2},~\ref{aa3}, and~\ref{dd2}, and mention several other consequences of our results. We begin Section~\ref{symnonorient} by proving Theorem~\ref{ee} and Corollary~\ref{ee1}. Then we give proofs of our second main result (Theorem~\ref{bb}) and of Corollaries~\ref{bb2} and~\ref{bb1}. We end this paper by discussing the problem of determining the (sequential) topological complexity of symmetric products of closed non-orientable surfaces.

\subsection*{Notations and conventions} In this paper, the term \emph{space} refers to a connected CW complex, \emph{map} refers to a topologically continuous function or a group homomorphism, \emph{surface} refers to a closed connected $2$-manifold, and \emph{ring} refers to a commutative ring with unity. We use the symbol ``$=$'' to denote homeomorphisms of spaces and isomorphisms of groups, and ``$\simeq$'' to denote homotopy equivalences. The tensor product is taken over $\Z$ and is denoted by $\otimes$.

\section{Preliminaries}\label{Preliminaries}
In this section, we give a brief review of the symmetric products of spaces and the classical and distributional complexities of spaces and maps.

\subsection{Symmetric products}\label{symprod}
The $n$-th symmetric product of a space $X$, denoted $SP^n(X)$, is defined as the orbit space $X^n/S_n$ of the natural action of the symmetric group $S_n$ on the product space $X^n$ which permutes the coordinates. Then the elements of $SP^n(X)$ are unordered $n$-tuples $[x_1,\dots, x_n]$, where $x_i\in X$. We write $\vartheta_n:X^n\to SP^n(X)$ for the corresponding quotient map. 

Let us fix some $x_0\in X$ as the basepoint of $X$. Then there are the basepoint inclusions $SP^n(X)\hookrightarrow SP^{n+k}(X)$ for each $k\ge 1$. The colimit over the symmetric products with respect to the basepoint inclusions defines the infinite symmetric product $SP^{\infty}(X)$, which is a free abelian topological monoid. It follows from the Dold--Thom theorem~\cite{DT} that $\pi_k(SP^{\infty}(X))=\wt{H}_k(X;\Z)$ for each $k\ge 1$.

When $X$ is a CW complex, $SP^n(X)$ can also be given a CW structure. Indeed, the product space $X^n$ has a CW structure such that each $\sigma\in S_n$ permutes the cells.
This CW structure on $X^n$ respects the permutation action of the symmetric group $S_n$, thereby giving a CW complex structure to the orbit space $SP^n(X)$, see \cite[Proposition II.1.5 and Exercise II.1.17 (2)]{tD}. 

If $X$ is a $2$-dimensional CW complex with only one vertex (for example, if $X$ is a surface), then the CW structure on $SP^n(X)$ is natural and can be described explicitly~\cite{KS}, see also~\cite{BR}. More precisely, for some integers $k\ge 0$ and $r\ge 1$, let $X$ be a $2$-dimensional CW complex obtained by attaching $r$ number of closed $2$-discs to the wedge $\bigvee_{i=0}^k\hspace{0.5mm}S^1_i$. Let $e\in X$ be the basepoint of the wedge that is also the identity element of each circle group $S^1_i$. Define a relation $\sim$ on $SP^n(X)$ such that
\[
\left[x_1,x_2,x_3,\ldots,x_n\right]\sim \left[e,x_1\cdot x_2,x_3,\ldots,x_n\right]
\]
whenever $x_1,x_2\in S^1_i$ for some $i$. Here, $\cdot$ denotes the group-theoretic multiplication on $S_i^1$. Let $\ov{SP}^n(X):=SP^n(X)/\sim$ and $q_n:SP^n(X)\to \ov{SP}^n(X)$ be the quotient map. It follows from~\cite{KS} that $\ov{SP}^n(X)$ has a canonical CW structure and $q_n$ is a natural homotopy equivalence. Also, the composition $q_n \circ\vartheta_n:X^n\to \ov{SP}^n(X)$ is cellular. The basepoint inclusions $\ov{SP}^n(X)\hookrightarrow \ov{SP}^{n+1}(X)$ define $\ov{SP}^{\infty}(X)$ in the obvious way. Since $q_n\circ\vartheta_n$ is cellular for each $n$, the $k$-skeleton of $\ov{SP}^k(X)$ coincides with the $k$-skeleton of $\ov{SP}^n(X)$ for each $k\le n$. In particular, this means that
\begin{equation}\label{skeleton}
    \text{the } k\text{-skeleton of } \ov{SP}^{\infty}(X) \text{ coincides with that of } \ov{SP}^n(X) \text{ for each } n\ge k.
\end{equation}

If $X$ is a closed manifold (i.e., a compact manifold without boundary), then it is not difficult to show that $SP^n(X)$ is a closed manifold for each $n\ge 2$ if and only if $\dim(X)=2$, see~\cite{Pu} for a proof sketch. For this reason, we will focus more on surfaces and $2$-dimensional CW complexes of the above kind in Sections~\ref{symorient} and~\ref{symnonorient} of this paper for explicit results and computations.

\subsection{LS-category and topological complexity}\label{lstcdefn}
Let $X$ be a space and $R$ be a ring. 
Then the cup-length of $H^*(X;R)$ (or the $R$-cup-length of $X$), denoted $\cu_R(X)$, is the maximal length $k$ of a non-zero cup product $\alpha_1\smile\cdots\smile\alpha_k\ne 0$ of positive degree cohomology classes $\alpha_i\in H^*(X;R)$.

We recall that the \emph{Lusternik--Schnirelmann category} (LS-category) of a map $f:X\to Y$, denoted $\cat(f)$, is the smallest integer $n$ for which there is a covering $\{U_i\}$ of $X$ by $n+1$ open sets such that the restricted maps $f_{|U_i}:U_i\to Y$ are null-homotopic for each $i$,~\cite{BG}. When $f$ is the identity map on $X$, we obtain the LS-category of the space $X$, denoted $\cat(X)$,~\cite{Ja},~\cite{CLOT}.

For a proof of the following, see~\cite{BG} and~\cite{Ja} (see also~\cite{Sch} and~\cite{CLOT} for the case when $f$ is the identity map).

\begin{thm}\label{clbound}
    For a map $f:X\to Y$ between finite-dimensional CW complexes and ring $R$, $\cu_R(\textup{Im}(f^*))\le\cat(f)\le\min\{\cat(X),\cat(Y)\}\le\min\{\dim(X),\dim(Y)\}$.
\end{thm}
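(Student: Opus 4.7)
The plan is to establish the three inequalities separately, each by a classical categorical argument. The rightmost bound $\min\{\cat(X),\cat(Y)\}\le\min\{\dim(X),\dim(Y)\}$ reduces to the standard fact $\cat(Z)\le\dim(Z)$ for finite-dimensional CW complexes applied to both $X$ and $Y$; this is proved by thickening the skeletal filtration of $Z$ into $\dim(Z)+1$ open sets, each contractible in $Z$. Nothing further is needed here.

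For the middle inequality $\cat(f)\le\min\{\cat(X),\cat(Y)\}$, I would produce the two required bounds from categorical covers on the respective sides. Given a cover $\{U_i\}_{i=0}^{\cat(X)}$ of $X$ with each $U_i\hookrightarrow X$ null-homotopic, the restriction $f|_{U_i}$ factors up to homotopy through a point and so is itself null-homotopic, yielding $\cat(f)\le\cat(X)$. Given a cover $\{V_j\}_{j=0}^{\cat(Y)}$ of $Y$ with each $V_j\hookrightarrow Y$ null-homotopic, the pullback $\{f^{-1}(V_j)\}$ is an open cover of $X$, and each restriction $f|_{f^{-1}(V_j)}$ factors as $f^{-1}(V_j)\hookrightarrow V_j\hookrightarrow Y$, whose second arrow is null-homotopic. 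This gives $\cat(f)\le\cat(Y)$.

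The main content is the cup-length bound $c\ell_R(\textup{Im}(f^*))\le\cat(f)$. Setting $n=\cat(f)$, I would fix a categorical cover $\{U_0,\ldots,U_n\}$ of $X$ with each $f|_{U_i}$ null-homotopic. For any positive-degree class $\beta\in H^*(Y;R)$, the restriction $f^*(\beta)|_{U_i}=(f|_{U_i})^*(\beta)$ vanishes, so the long exact sequence of the pair $(X,U_i)$ lifts $f^*(\beta)$ to a relative class in $H^*(X,U_i;R)$. The iterated relative cup product
\[
H^*(X,U_0;R)\otimes\cdots\otimes H^*(X,U_n;R)\longrightarrow H^*(X,U_0\cup\cdots\cup U_n;R)=H^*(X,X;R)=0
\]
then forces $f^*(\beta_0)\smile\cdots\smile f^*(\beta_n)=0$ for any positive-degree classes $\beta_0,\ldots,\beta_n\in H^*(Y;R)$, giving $c\ell_R(\textup{Im}(f^*))\le n$.

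The main obstacle is technical rather than conceptual: justifying the multi-fold relative cup product for an excisive family of open subsets of a CW complex, and verifying that the lifts through the long exact sequences assemble compatibly into the above product. For finite CW complexes this reduces to standard excision and relative diagonal approximation, so no new input is needed beyond invoking this classical machinery.
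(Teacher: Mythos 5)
Your proof is correct, and it follows the standard route. The paper itself does not supply a proof of this statement, merely pointing to Berstein--Ganea, James, Schwarz, and the CLOT monograph; the argument given in those sources is precisely the one you reconstruct. The reduction of the dimension bound to $\cat(Z)\le\dim(Z)$, the two halves of $\cat(f)\le\min\{\cat(X),\cat(Y)\}$ via factoring through either the domain's categorical sets or the preimages of the codomain's, and the Berstein--Ganea lifting of $f^*(\beta)$ to relative cohomology followed by the vanishing of the iterated relative cup product in $H^*(X,X;R)=0$, are all exactly the classical steps. One small point worth making explicit: to pass from "products of $f^*(\beta_0),\ldots,f^*(\beta_n)$ vanish" to "$c\ell_R(\operatorname{Im}(f^*))\le n$," one should note that an arbitrary positive-degree element of $\operatorname{Im}(f^*)$ may be written as $f^*(\beta)$ with $\beta$ itself of positive degree (take the homogeneous component of matching degree), which is what makes the argument close up; you use this implicitly and it is fine. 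Otherwise, no gaps.
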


For any integer $m\ge 2$, let $\Delta_m:X\to X^m$ denote the diagonal map. Suppose $\Delta_m^*:H^*(X^m;R)\to H^*(X;R)$ is the map induced by $\Delta_m$ in $R$-cohomology. 

\begin{defn}[\protect{\cite{BGRT}}]
    The $m$-th \emph{$R$-zero-divisor cup-length} of $X$, denoted $\zcl^m_R(X)$, is defined as the cup-length of the ideal $\text{Ker}(\Delta_m^*)$.
\end{defn}

For given $m \ge 2$, the $m$-th \emph{sequential topological complexity} of a map $f:X\to Y$, denoted $\TC_m(f)$, is the smallest integer $n$ for which there exists a covering $\{U_i\}$ of $X^m$ by $n+1$ open sets over each of which there is a map $s_i : U_i \to P(Y)$ such that for each $(x_1,\ldots,x_m) \in U_i \subset X^m$, 
\[
 s_i\left(x_1,\ldots,x_m\right)\left(\frac{j-1}{m-1}\right) = f\left(x_j\right) 
\]
for all $1 \le j \le m$, see~\cite{Ku} (and also~\cite{Sco} for the case $m=2$). Here, $P(Y)=Y^{[0,1]}$ denotes the path space of $Y$ with the compact-open topology. When $f$ is the identity map on $X$, we obtain the $m$-th sequential topological complexity of the space $X$, denoted $\TC_m(X)$, which was introduced by Rudyak~\cite{Ru} as an extension to Farber's~\cite{Far1} original notion of topological complexity $\TC(X)$.

The next theorem follows from~\cite{Sch} and~\cite{Ru} (and also~\cite{Far1} in the case $m=2$).

\begin{thm}\label{tcbound}
    For a space $X$, integer $m\ge 2$, and ring $R$, we have the inequalities $\max\{\cat(X^{m-1}),\zcl^m_R(X)\}\le\TC_m(X)\le\cat(X^m)\le m\cat(X)$.
\end{thm}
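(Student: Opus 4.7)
The plan is to identify $\TC_m(X)$ with the sectional category of a fibration and then invoke Schwarz's general framework. First, I would note that $\TC_m(X)=\secat(e_m)$, where $e_m\colon X^{[0,1]}\to X^m$ is the fibration sending $\gamma\mapsto (\gamma(0),\gamma(\tfrac{1}{m-1}),\ldots,\gamma(1))$. Since $X^{[0,1]}$ deformation retracts onto the constant paths in a way compatible with $e_m$, this fibration is fiber homotopy equivalent to the diagonal inclusion $\Delta_m\colon X\to X^m$. In particular, the induced map in cohomology is $\Delta_m^*$, and Schwarz's cup-length bound for sectional category gives
\[
zc\ell_R^m(X)=c\ell_R\bigl(\Ker(\Delta_m^*)\bigr)\le \secat(\Delta_m)=\TC_m(X).
\]

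For the inequality $\cat(X^{m-1})\le \TC_m(X)$, I would start with a motion planner $\{U_i,s_i\}_{i=0}^{n}$ witnessing $\TC_m(X)=n$, fix a basepoint $x_0\in X$, and consider the embedding $\iota\colon X^{m-1}\to X^m$ given by $(x_1,\ldots,x_{m-1})\mapsto (x_0,x_1,\ldots,x_{m-1})$. Each $V_i:=\iota^{-1}(U_i)$ is open in $X^{m-1}$ and the local section $s_i\circ \iota$ provides, via its initial value at $t=0$, a continuous contraction of $V_i$ to $x_0$ inside $X$; projecting coordinates and concatenating these in $X^{m-1}$ shows that each $V_i$ is null-homotopic in $X^{m-1}$. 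Hence $\{V_i\}_{i=0}^n$ witnesses $\cat(X^{m-1})\le n$.

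For the upper bound $\TC_m(X)\le \cat(X^m)$, I would take a categorical cover $\{W_j\}_{j=0}^{k}$ of $X^m$ with null-homotopies $H_j\colon W_j\times [0,1]\to X^m$ from the inclusion to a constant $(y_j^1,\ldots,y_j^m)$. Fix auxiliary paths in $X$ connecting $y_j^\ell$ to $y_j^1$ for $\ell=2,\ldots,m$. Given $(x_1,\ldots,x_m)\in W_j$, projecting $H_j$ onto its $\ell$-th factor gives a path from $x_\ell$ to $y_j^\ell$, which can be concatenated with the auxiliary path to $y_j^1$. Splicing these paths and their reverses across the subintervals $[\tfrac{\ell-1}{m-1},\tfrac{\ell}{m-1}]$ (with a constant stretch at $y_j^1$ if needed) produces a continuous map $W_j\to P(X)$ that hits $x_\ell$ at time $\tfrac{\ell-1}{m-1}$, yielding the required motion planner with $k+1$ domains. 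The final inequality $\cat(X^m)\le m\cat(X)$ is the standard product inequality for LS-category of CW complexes.

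The main obstacle is the bookkeeping in the upper bound: one must verify that the concatenation of paths is continuous in $(x_1,\ldots,x_m)\in W_j$ and that the time-parametrization lands exactly on $\tfrac{\ell-1}{m-1}$ at the right point. Once the splicing is set up carefully, everything else is formal consequence of Schwarz's theory for sectional category applied to the diagonal fibration.
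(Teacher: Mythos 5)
Your proposal is correct and follows the standard route found in the references the paper cites (Schwarz, Rudyak, Farber, and Basabe--Gonz\'alez--Rudyak--Tamaki): identify $\TC_m(X)$ with the sectional category of the evaluation fibration $e_m$, use the fiber homotopy equivalence with the diagonal $\Delta_m$ together with Schwarz's cup-length bound to get $zc\ell^m_R(X)\le\TC_m(X)$, use the basepoint slice $X^{m-1}\hookrightarrow X^m$ to pull back a motion planner to a categorical cover, and splice null-homotopies (with auxiliary connecting paths) to convert a categorical cover of $X^m$ into a motion planner, finishing with the standard product inequality for $\cat$. The paper states the theorem without proof, so there is nothing to contrast; your sketch is the expected argument.
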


The proof of the following statement, which generalizes Theorem~\ref{tcbound}, can be found in~\cite{Ku} (see also~\cite{Sco} for the case $m=2$).

\begin{thm}\label{tcbound2}
    For a map $f:X\to Y$, integer $m\ge 2$, and ring $R$, we have that $\max\{\cat(f^{m-1}),\zcl^m_R(\textup{Im}((f^m)^*))\}\le \TC_m(f)\le\min\{\cat(f^m),\TC_m(X),\TC_m(Y)\}$.
\end{thm}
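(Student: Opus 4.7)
The plan is to generalize the standard proof of Theorem~\ref{tcbound} from~\cite{Ru} and~\cite{Sch} to the relative setting of a map. I would interpret $\TC_m(f)$ as the Schwarz sectional category $\secat(p)$ of the pullback fibration $p: E \to X^m$, where $E = X^m \times_{Y^m} P(Y)$ and the evaluation map $e_m: P(Y) \to Y^m$ sends $\gamma$ to $(\gamma(0), \gamma(1/(m-1)), \ldots, \gamma(1))$. A local section of $p$ over $U_i \subseteq X^m$ is precisely a map $s_i: U_i \to P(Y)$ satisfying the motion planning condition from the definition of $\TC_m(f)$, so this reformulation is equivalent to the original definition.

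For the upper bounds: $\TC_m(f) \le \TC_m(X)$ follows by postcomposing local motion planners for $X$ with the induced map $P(X) \to P(Y)$, and $\TC_m(f) \le \TC_m(Y)$ follows by pulling back a motion planning cover of $Y^m$ along $f^m: X^m \to Y^m$. For $\TC_m(f) \le \cat(f^m)$, given a cover $\{U_i\}$ of $X^m$ with each $f^m|_{U_i}$ null-homotopic, fix $y_0 \in Y$ and choose null-homotopies $H_i = (H_i^1,\ldots,H_i^m): U_i \times I \to Y^m$ from $f^m|_{U_i}$ to the constant map at $(y_0,\ldots,y_0)$. Define $s_i(x) \in P(Y)$ so that on each subinterval $[(j-1)/(m-1), j/(m-1)]$ the path first travels from $f(x_j)$ to $y_0$ along $H_i^j(x,\cdot)$ and then from $y_0$ to $f(x_{j+1})$ along $H_i^{j+1}(x,\cdot)$ reversed; the endpoints match continuously and yield a local motion planner.

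For the lower bounds: to establish $\cat(f^{m-1}) \le \TC_m(f)$, fix a basepoint $x_0 \in X$ and consider the slice inclusion $\iota: X^{m-1} \hookrightarrow X^m$, $(x_1,\ldots,x_{m-1}) \mapsto (x_0,x_1,\ldots,x_{m-1})$. If $\{U_i\}$ is a motion planning cover for $f$ with maps $s_i: U_i \to P(Y)$ and $V_i := \iota^{-1}(U_i)$, the formula $H(x,t)_j := s_i(\iota(x))(tj/(m-1))$ gives a null-homotopy of $f^{m-1}|_{V_i}$ to the constant map at $(f(x_0),\ldots,f(x_0))$. For $zc\ell^m_R(\textup{Im}((f^m)^*)) \le \TC_m(f)$, Schwarz's theorem yields $\secat(p) \ge c\ell_R(\ker p^*)$, so it suffices to show $\textup{Im}((f^m)^*) \cap \ker(\Delta_m^*) \subseteq \ker(p^*)$ in $H^*(X^m;R)$. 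This follows from two observations: the diagonal section $\sigma_f: X \to E$, $x \mapsto (x,\ldots,x,c_{f(x)})$, satisfies $p \circ \sigma_f = \Delta_m^X$; and commutativity of the pullback square together with the homotopy equivalence $P(Y) \simeq Y$ under which $e_m$ corresponds to $\Delta_m^Y$ imply $p^*((f^m)^* \alpha) = p^* \pi_1^* f^* (\Delta_m^Y)^*(\alpha)$, which vanishes whenever $(\Delta_m^X)^*(f^m)^* \alpha = f^* (\Delta_m^Y)^* \alpha = 0$.

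The main obstacle will be the upper bound $\TC_m(f) \le \cat(f^m)$: one must interleave $m$ coordinate null-homotopies to produce a single path in $Y$ that visits each $f(x_j)$ at the correct time $(j-1)/(m-1)$ while varying continuously in $x \in U_i$. Once the explicit piecewise concatenation is written down, continuity and the endpoint constraints become immediate from the corresponding properties of the $H_i^j$.
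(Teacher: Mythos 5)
Your proof is correct. The paper does not actually supply its own proof of this statement; it defers to Kuanyshov~\cite{Ku} (and Scott~\cite{Sco} for $m=2$), and your reconstruction follows the same standard route those sources take: identify $\TC_m(f)$ with the sectional category of the pullback of the evaluation fibration $e_m:P(Y)\to Y^m$ along $f^m$, derive the three upper bounds by the usual pull-back/post-compose/concatenate arguments, get $\cat(f^{m-1})\le\TC_m(f)$ from restriction to a coordinate slice, and get the cup-length lower bound from Schwarz's inequality $\secat(p)\ge c\ell_R(\ker p^*)$ together with the observation that, for $\beta=(f^m)^*\alpha$ with $\Delta_m^*\beta=0$, one has $p^*\beta=q^*e_m^*\alpha=p^*\pi_1^*f^*(\Delta_m^Y)^*\alpha=p^*\pi_1^*(\Delta_m^*\beta)=0$. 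All four inequalities check out, including the endpoint bookkeeping in the interleaving construction for $\TC_m(f)\le\cat(f^m)$ and the formula $H(x,t)_j=s_i(\iota(x))(tj/(m-1))$ for the slice null-homotopy. The only superfluous remark is the "diagonal section" $\sigma_f$; it is not actually needed once you have the identity $ev_0\circ q=f\circ\pi_1\circ p$, but it does no harm.
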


Here, $f^k:X^k\to Y^k$ is defined as $f^k(x_1,\ldots,x_k)=(f(x_1),\ldots,f(x_k))$ for each $k\ge 1$, and $\zcl_R^m(\textup{Im}((f^m)^*)):=\cu_R(\textup{Im}((f^m)^*)\cap \text{Ker}(\Delta_m^*))$.

\subsection{Distributional category and complexity}\label{2c}
Recently in~\cite{DJ} and~\cite{J1}, some probabilistic variants of $\cat$ and $\TC_m$ were introduced. These are the distributional category and the $m$-th sequential distributional complexity, respectively. Some other closely related probabilistic variants, called the ``analog invariants'', were also introduced and studied, independently, by Knudsen and Weinberger in \cite{KW} around the same time. See~\cite[Section 1]{DJ},~\cite[Section 1]{KW}, and~\cite[Section 2.A]{J2} for some motivations behind these new invariants.

For a metric space $Z$, let $\B(Z)$ denote the set of probability measures on $Z$. Also, for any $n \ge 1$, let $\mathcal{B}_{n}(Z) = \{\mu \in \mathcal{B}(Z) \mid |\supp(\mu)| \le n \}$ denote the space of measures on $Z$ supported by at most $n$ points, equipped with  the Lévy--Prokhorov metric,~\cite{Pr},~\cite[Section 3.1]{DJ}. Let $P(z_1,z_2)=\{\phi\in P(Z)\mid \phi(0)=z_1,\phi(1)=z_2\}$ for any $z_1,z_2\in P(Z)$. If $Z$ has a fixed basepoint $z_0\in Z$, then we write the based path space of $Z$ as $P_0(Z)=\{\phi\in P(Z)\mid \phi(1)=z_0\}$.

We recall that the \emph{distributional category} of a pointed map $f:(X,x_0)\to (Y,y_0)$, denoted $\dcat(f)$, is the smallest integer $n$ for which there exists a continuous map $H:X\to \B_{n+1}(P_0(Y))$ such that $H(x)(0)\in \B_{n+1}(P(f(x),y_0))$ for each $x\in X$, see~\cite{Dr4},~\cite{J2}. When $f$ is the identity map on $X$, we obtain the distributional category of the space $X$, denoted $\dcat(X)$,~\cite{DJ}. 

In~\cite{DJ}, it was shown that $\cu_{\Q}(X)\le \dcat(X)\le\cat(X)$ when $X$ is a finite CW complex. This inequality can be generalized from spaces to maps as follows. 

\begin{thm}\label{dcatbound}
    For a map $f:X\to Y$ between finite CW complexes, we have that $\cu_{\Q}(\textup{Im}(f^*))\le\dcat(f)\le\min\{\cat(f),\dcat(X),\dcat(Y)\}$.
\end{thm}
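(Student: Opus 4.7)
The plan is to prove the two inequalities separately, with each direction extending the corresponding space-level arguments from \cite{DJ} and \cite{J2} to the relative setting of a pointed map $f : (X, x_0) \to (Y, y_0)$.

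For the upper bound $\dcat(f) \le \min\{\cat(f), \dcat(X), \dcat(Y)\}$, I verify each of the three inequalities by a direct construction. To see $\dcat(f) \le \dcat(Y)$, I compose a distributional categorical witness $H_Y : Y \to \B_{n+1}(P_0(Y))$ for $\dcat(Y) \le n$ with $f$; the support condition on $H_Y \circ f$ at $x$ is exactly what is required for $\dcat(f)$. To see $\dcat(f) \le \dcat(X)$, I push measures $H_X(x) \in \B_{n+1}(P_0(X))$ forward along the continuous map $P_0(X) \to P_0(Y)$, $\gamma \mapsto f \circ \gamma$, which is well defined because $f$ is pointed and sends paths from $x$ to $x_0$ to paths from $f(x)$ to $y_0$. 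For $\dcat(f) \le \cat(f)$, given a categorical cover $U_0, \ldots, U_n$ of $X$ for $f$ with nullhomotopies $F_i : U_i \times I \to Y$ from $f|_{U_i}$ to the constant map at $y_0$ (arranged by concatenation with chosen paths, using connectedness of $Y$), I pick a subordinate partition of unity $\{\phi_i\}$ and define
\[
H(x) := \sum_{i=0}^{n} \phi_i(x)\, \delta_{F_i(x, \cdot)},
\]
producing a continuous map $X \to \B_{n+1}(P_0(Y))$ whose value at each $x$ is supported on at most $n+1$ paths from $f(x)$ to $y_0$.

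For the lower bound $c\ell_{\Q}(\textup{Im}(f^*)) \le \dcat(f)$, I would adapt the rational cohomological obstruction from \cite{DJ}. Assume $\dcat(f) \le n$, witnessed by $H : X \to \B_{n+1}(P_0(Y))$, and choose positive-degree classes $\alpha_1, \ldots, \alpha_k \in H^*(Y;\Q)$ with $f^*\alpha_1 \smile \cdots \smile f^*\alpha_k \ne 0$ in $H^*(X;\Q)$; the goal is to deduce $k \le n$. Composing $H$ with the evaluation $\mathrm{ev}_0 : \B_{n+1}(P_0(Y)) \to \B_{n+1}(Y)$ identifies $f$, via Dirac masses, with a map that lifts through a space of measures of support size at most $n+1$. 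The rational cohomology of $\B_{n+1}(Y)$ can be accessed by approximating measures with rational weights by elements of finite symmetric products $SP^{n+1}(Y)$, and then using the Dold--Thom identification of $H^*(SP^\infty(Y);\Q)$ as the free graded-commutative algebra on $\widetilde H^*(Y;\Q)$. A Schwarz-style sectional-category obstruction then forces any cup product of length exceeding $n$ of pulled-back classes in $\textup{Im}(f^*)$ to vanish.

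The main obstacle is the lower bound; the upper bound constructions are essentially formal. The subtlety there is the rational comparison between $\B_{n+1}(Y)$ and the symmetric product and extracting an obstruction that behaves well under the algebra map $f^* : H^*(Y;\Q) \to H^*(X;\Q)$. However, once the space-level obstruction of \cite{DJ} is in place, the transfer to the map setting is clean: a nonzero $k$-fold product in $\textup{Im}(f^*)$ is the image under $f^*$ of a corresponding nonzero product in $H^*(Y;\Q)$, to which the obstruction applies verbatim.
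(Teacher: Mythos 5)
Your three upper-bound constructions are all correct and are precisely what the cited references do: composing a $\dcat(Y)$-witness with $f$, pushing a $\dcat(X)$-witness forward along $P_0(X) \to P_0(Y)$, $\gamma \mapsto f\circ\gamma$, and converting a categorical cover for $f$ into a single measure-valued map via a subordinate partition of unity. These are formal and you have them right.

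For the lower bound you have the correct starting point (the factorization of $x \mapsto \delta_{f(x)}$ through $\B_{n+1}(P_0(Y))$), but the sketch omits the actual mechanism, which does not run through $\B_{n+1}(Y)$ or the Dold--Thom description of $H^*(SP^\infty Y)$ directly. The argument in \cite{DJ} (and in the proof of Theorem~D in this paper) instead stratifies $X$ into sets $A_i = \{x : |\supp H(x)| = i\}$, uniformly re-weights $H(x)$ by $(n+1)!/i$ on $A_i$ so as to land in $SP^{(n+1)!}(P_0(Y))$, uses that this symmetric product of a contractible space is contractible to conclude that the restriction of $\delta_{(n+1)!}\circ f$ to each $A_i$ is nullhomotopic, and then runs the usual relative long-exact-sequence cup-product argument in \emph{Alexander--Spanier} cohomology (the $A_i$ are not open) over the cover $\{A_i\}$ of $X$. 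Combined with the surjectivity of $\delta_{(n+1)!}^*$ in rational cohomology (Proposition~3.3), this yields $c\ell_\Q(\textup{Im}(f^*)) = c\ell_\Q(\textup{Im}((\delta_{(n+1)!}\circ f)^*)) \le n$. Your final sentence is the one place that would go wrong if taken literally: the obstruction does not ``apply verbatim'' to the nonzero product $\alpha_1\smile\cdots\smile\alpha_k$ in $H^*(Y;\Q)$, since that would only yield $k \le \dcat(Y)$, which is the \emph{weaker} inequality; one has to run the obstruction on the cover of $X$ produced by $H$, against classes pulled back through $\delta_{(n+1)!}\circ f$, to get the sharper bound $k \le \dcat(f)$.
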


\begin{proof}[Proof sketch]
    Only the lower bound $\cu_{\Q}(\textup{Im}(f^*))\le\dcat(f)$ needs some explanation. It follows from~\cite[Proposition 2.5 (4)]{J2} that $\cu_{\Q}(\textup{Im}(f^*\circ \delta_{n!}^*))\le\dcat(f)$, where $\delta_{n!}:Y\to SP^{n!}(Y)$ is the diagonal embedding (see the Introduction). By Proposition~\ref{split2}, $\delta_{n!}^*$ is an epimorphism, so one can proceed along the lines of~\cite[Sections 4.1 \& 4.2]{DJ} to conclude that $\cu_{\Q}(\textup{Im}(f^*\circ \delta_{n!}^*))=\cu_{\Q}(\textup{Im}(f^*))$.
\end{proof}

Let us fix $m\ge 2$. For any $m$-tuple $(z_1,\ldots,z_m)\in Z^m$, let 
\[
P\left(z_1,\ldots,z_m\right)=\left\{ \phi\in P(Z)\hspace{1mm}\middle|\hspace{1mm} \phi\left(\frac{i-1}{m-1}\right)=z_i \textup{ for all } 1 \le i \le m\right\}.
\]

Given a map $f:X\to Y$ and integer $m \ge 2$, the $m$-th \emph{sequential distributional complexity} of $f$, denoted $\dTC_m(f)$, is the smallest integer $n$ for which there exists a map $s_m:X^m\to\B_{n+1}(P(Y))$ such that for each $(x_1,\ldots,x_m)\in X^m$, $s_m(x_1,\ldots,x_m)\in\B_{n+1}(P(f(x_1),\ldots,f(x_m)))$. When $f$ is the identity map on $X$, we obtain the $m$-th sequential distributional complexity of $X$, denoted $\dTC_m(X)$, see~\cite{J1} (and also~\cite{DJ} for the case $m=2$).

The proof of the following statement is a generalization of the proof of the corresponding statement for $\dTC_m(X)$ found in~\cite[Section 3]{J1}.

\begin{thm}\label{dtcbound}
    For a map $f:X\to Y$, integer $m\ge 2$, and ring $R$, we have that $\dcat(f^{m-1})\le \dTC_m(f)\le\min\{\dcat(f^m),\TC_m(f),\dTC_m(X),\dTC_m(Y)\}$.
\end{thm}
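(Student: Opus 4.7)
The plan is to establish each of the five inequalities comprising Theorem~\ref{dtcbound} by adapting the arguments for $\dTC_m(X)$ from~\cite[Section 3]{J1} to the map setting. All constructions will rest on two standard functorial facts about the measure functor $\B_{n+1}$: a continuous map $\Phi:Z\to W$ between metric spaces induces a continuous pushforward $\Phi_*:\B_{n+1}(Z)\to\B_{n+1}(W)$ in the L\'evy--Prokhorov metric, and $\Phi_*$ preserves the support-size bound.

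First I would dispatch the three easy upper bounds. For $\dTC_m(f)\le\TC_m(f)$, take a cover $\{U_0,\ldots,U_n\}$ of $X^m$ and sections $s_i:U_i\to P(Y)$ witnessing $\TC_m(f)\le n$; choose a partition of unity $\{\phi_i\}$ subordinate to this cover and set
\[
s_m(\underline{x})=\sum_{i=0}^n\phi_i(\underline{x})\,\delta_{s_i(\underline{x})},
\]
with the convention that a term is omitted when $\phi_i(\underline{x})=0$. This produces a continuous map $X^m\to\B_{n+1}(P(Y))$ whose atoms satisfy the endpoint constraints pointwise. For $\dTC_m(f)\le\dTC_m(Y)$, precompose a $\dTC_m(Y)$ witness with $f^m:X^m\to Y^m$. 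For $\dTC_m(f)\le\dTC_m(X)$, push a $\dTC_m(X)$ witness $s_m^X$ forward through $P(X)\to P(Y)$, $\gamma\mapsto f\circ\gamma$.

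The remaining upper bound $\dTC_m(f)\le\dcat(f^m)$ requires a continuous concatenation trick. Given $\gamma=(\gamma_1,\ldots,\gamma_m)\in P_0(Y^m)$ with $\gamma(1)=(y_0,\ldots,y_0)$, I would define $\Psi(\gamma)\in P(Y)$ to traverse, on each subinterval $[(i-1)/(m-1),i/(m-1)]$, first $\gamma_i$ reparametrized to run from $\gamma_i(0)$ to $y_0$, then $\gamma_{i+1}$ reparametrized and reversed to run from $y_0$ to $\gamma_{i+1}(0)$. The resulting $\Psi:P_0(Y^m)\to P(Y)$ is continuous, and $\Psi(\gamma)((j-1)/(m-1))=\gamma_j(0)$ for each $j$. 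Pushing forward a $\dcat(f^m)$ witness $H:X^m\to\B_{n+1}(P_0(Y^m))$ by $\Psi_*$ yields the required $s_m$ for $\dTC_m(f)$.

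The lower bound $\dcat(f^{m-1})\le\dTC_m(f)$ is the one I expect to demand the most care. Starting from a witness $s_m:X^m\to\B_{n+1}(P(Y))$, I would restrict to the slice $X^{m-1}\times\{x_0\}$; then every path $\gamma$ in the support of $s_m(x_1,\ldots,x_{m-1},x_0)$ ends at $y_0=f(x_0)$, so the restriction lands in $\B_{n+1}(P_0(Y))$. I would then split each such $\gamma$ into $m-1$ component paths
\[
\gamma_i(t)=\gamma\!\left(\tfrac{i-1}{m-1}+t\cdot\tfrac{m-i}{m-1}\right),\qquad i=1,\ldots,m-1,
\]
each running from $\gamma((i-1)/(m-1))=f(x_i)$ to $y_0$. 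The assignment $\gamma\mapsto(\gamma_1,\ldots,\gamma_{m-1})$ is a continuous map $P_0(Y)\to P_0(Y^{m-1})$, and its pushforward, composed with the restricted $s_m$, defines the desired $H:X^{m-1}\to\B_{n+1}(P_0(Y^{m-1}))$ witnessing $\dcat(f^{m-1})\le n$. The principal obstacle here is bookkeeping rather than ideas: one must check that all constructions are functorial in the measure sense and that the support bound $n+1$ is never inflated, but each individual step is a direct generalization of the spaces-only proof in~\cite{J1}.
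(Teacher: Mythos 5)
Your proposal is correct and takes essentially the same approach as the paper. The paper's own ``proof'' is simply the remark that Theorem~\ref{dtcbound} is obtained by generalizing the space-level arguments of~\cite[Section~3]{J1} from $\dTC_m(X)$ to $\dTC_m(f)$, and that is precisely what you do: partition of unity for $\dTC_m(f)\le\TC_m(f)$, pushforward along $f^m$ (resp.\ the post-composition $\gamma\mapsto f\circ\gamma$) for $\dTC_m(f)\le\dTC_m(Y)$ (resp.\ $\dTC_m(f)\le\dTC_m(X)$), a concatenation map $P_0(Y^m)\to P(Y)$ for $\dTC_m(f)\le\dcat(f^m)$, and restriction to the slice $X^{m-1}\times\{x_0\}$ followed by a tail-splitting map $P_0(Y)\to P_0(Y^{m-1})$ for $\dcat(f^{m-1})\le\dTC_m(f)$.
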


In general, the distributional invariants are different from their classical counterparts. For example, $\dcat(\R P^n)=\dTC(\R P^n)=1<n=\cat(\R P^n)\le\TC(\R P^n)$ for each $n\ge 2$, see~\cite[Section 3]{DJ} (and compare with~\cite[Example 1.8 (2)]{CLOT} and \cite[Theorem 12]{FTY}). For various other interesting differences, see~\cite{KW},~\cite{Dr4},~\cite{J1}.

\section{Symmetric products of finite CW complexes}\label{secszcl}
In this section, motivated by the problem of bounding from below the LS-category and the sequential topological complexities of the symmetric products of a finite CW complex $X$ in terms of the cohomology of $X$, we prove Theorem~\ref{cc}.

Let $X$ be a finite CW complex and $x_0\in X$ be a fixed basepoint. Let us write $\xi_n:X\to SP^n(X)$ for the basepoint inclusion defined as $\xi_n(x)=[x,x_0,\ldots,x_0]$, where $x_0$ repeats $(n-1)$-times. Then we have the following classical result, which is often credited to Steenrod.

\begin{theorem}[\protect{\cite{Na},~\cite{Do1},~\cite{Do2}}]\label{split1}
    For a finite CW complex $X$ and $n\in \mathbb{N}\cup\{\infty\}$, the map $\xi_n$ induces a split epimorphism $\xi_n^*:H^*(SP^n(X);R)\to H^*(X;R)$ in $R$-cohomology for any ring $R$.
\end{theorem}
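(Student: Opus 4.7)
The plan is to first reduce the statement to the case $n = \infty$, and then leverage the Dold--Thom theorem. Observe that the basepoint inclusion $\xi_\infty$ factors as the composition $\xi_\infty = j_n \circ \xi_n$, where $j_n : SP^n(X) \hookrightarrow SP^\infty(X)$ is the natural inclusion. Consequently, $\xi_\infty^* = \xi_n^* \circ j_n^*$ in cohomology, so any section $s$ of $\xi_\infty^*$ yields a section $j_n^* \circ s$ of $\xi_n^*$. Hence it suffices to construct a section of $\xi_\infty^*$.

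For the case $n = \infty$, I would invoke the Dold--Thom theorem, which, for the finite CW complex $X$, gives a weak equivalence $SP^\infty(X) \simeq \prod_{k \ge 1} K(\wt H_k(X;\Z), k)$ to a finite product of Eilenberg--MacLane spaces. The inclusion $\xi_\infty$, being the universal map from $X$ into a topological abelian monoid, realizes the Hurewicz map on homotopy groups.

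For each class $\alpha \in H^k(X; R)$ represented by a classifying map $f_\alpha : X \to K(R, k)$, functoriality of $SP^\infty$ yields a map $SP^\infty(f_\alpha) : SP^\infty(X) \to SP^\infty(K(R, k))$ that commutes with $\xi_\infty$. The Dold--Thom decomposition of $SP^\infty(K(R, k))$ singles out $K(R, k)$ itself as a factor, with projection $p_k$, and the composition $p_k \circ \xi_\infty : K(R, k) \to K(R, k)$ induces the identity on $\pi_k = R$ via the Hurewicz isomorphism, and is therefore homotopic to the identity. Pulling back the fundamental class $\iota_k \in H^k(K(R,k);R)$ through $p_k \circ SP^\infty(f_\alpha)$ then produces a class in $H^k(SP^\infty(X); R)$ whose image under $\xi_\infty^*$ is $\alpha$, proving surjectivity.

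The main obstacle I foresee is upgrading this pointwise construction to an honest graded $R$-module section of $\xi_\infty^*$: one needs a coherent decomposition of $H^*(X; R)$ compatible with the lifts. This should be handled by invoking the split universal-coefficient sequence $H^k(X; R) \cong \Hom(H_k(X;\Z), R) \oplus \operatorname{Ext}(H_{k-1}(X;\Z), R)$ and constructing the section on each summand separately, with the $\operatorname{Ext}$ part requiring Bockstein-type lifts. Once these local sections assemble coherently across all degrees, one obtains the desired splitting, from which the finite-$n$ case follows via the reduction in Step~1.
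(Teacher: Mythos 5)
Your reduction to $n=\infty$ via the factorization $\xi_\infty = j_n\circ\xi_n$ is sound, and the surjectivity argument through classifying maps and Dold--Thom is essentially correct (modulo verifying that $p_k\circ\xi_\infty : K(R,k)\to K(R,k)$ is a self-equivalence, which does hold since it induces the Hurewicz isomorphism on $\pi_k$). The genuine gap is exactly the one you flag at the end, and your proposed repair does not close it. What you have built is, for each \emph{individual} class $\alpha\in H^k(X;R)$, a preimage $(p_k\circ SP^\infty(f_\alpha))^*(\iota_k)$ under $\xi_\infty^*$; the theorem asserts a \emph{split} epimorphism, i.e.\ a graded $R$-module section. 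Your pointwise assignment is not obviously additive: writing $f_{\alpha+\beta}\simeq \mu\circ(f_\alpha,f_\beta)$ with $\mu$ the $H$-space multiplication, the functor $SP^\infty$ does not carry $K(R,k)\times K(R,k)$ to a product, so $SP^\infty(f_{\alpha+\beta})$ does not decompose in terms of $SP^\infty(f_\alpha)$ and $SP^\infty(f_\beta)$. The universal-coefficient patch is also too thin as stated: the splitting $H^k(X;R)\cong\Hom(H_k(X;\Z),R)\oplus\operatorname{Ext}(H_{k-1}(X;\Z),R)$ is not natural in $X$, so constructing sections ``on each summand separately'' and asking them to assemble compatibly with $\xi_\infty^*$ is precisely the unproved content, and the ``Bockstein-type lifts'' on the $\operatorname{Ext}$-part are not specified.

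The classical proofs cited (Nakaoka, Dold) avoid all of this by working at the chain level: by acyclic models one produces a natural chain retraction $r:C_*(SP^n(X);\Z)\to C_*(X;\Z)$ with $r\circ i=\mathrm{id}$ for each finite $n$. Applying $\Hom_{\Z}(-,R)$ gives a cochain section of $\xi_n^\#$, hence the cohomology splitting for every coefficient ring $R$ at once, and for finite $n$ directly, with no recourse to Dold--Thom or Eilenberg--MacLane spaces. (For the coefficient fields $\Q$ and $\Z_2$ actually used in this paper, surjectivity alone would give the splitting, so your argument would suffice in practice; but for the theorem as stated over an arbitrary commutative ring the gap is real.)
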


Let $\delta_n:X\to SP^n(X)$ be the diagonal embedding defined as $\delta_n(x)=[x,x,\ldots,x]$. Using Theorem~\ref{split1} and the Dold--Thom theorem, the following result was obtained in~\cite[Section 4]{DJ}.

\begin{prop}\label{split2}
    For a finite CW complex $X$ and $n\in \mathbb{N}$, the map $\delta_n$ induces a split epimorphism $\delta_n^*:H^*(SP^n(X);\Q)\to H^*(X;\Q)$ in rational cohomology.
\end{prop}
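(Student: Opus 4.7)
The plan is to reduce the claim about $\delta_n$ to the known splitting statement for basepoint inclusions (Theorem~\ref{split1}) by passing to $SP^\infty(X)$ and exploiting its abelian topological monoid structure together with the Dold--Thom theorem. Write $i_n \colon SP^n(X) \hookrightarrow SP^\infty(X)$ for the canonical inclusion, $j \colon X \to SP^\infty(X)$ for the map $x \mapsto [x]$ (which factors as $i_n \circ \xi_n$ and is therefore independent of $n$), and $\mu_n \colon SP^\infty(X) \to SP^\infty(X)$ for the $n$-fold monoid multiplication, i.e. the map sending $[x_1, x_2, \ldots]$ to $[x_1, x_1, \ldots, x_1, x_2, x_2, \ldots]$ with each entry repeated $n$ times. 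A direct pointwise check gives the factorization
$$i_n \circ \delta_n = \mu_n \circ j.$$

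First, I would show that $\mu_n^*$ is an isomorphism on rational cohomology. Under the Dold--Thom identifications $\pi_k(SP^\infty(X)) \cong \widetilde{H}_k(X;\Z)$, the map $\mu_n$ induces multiplication by $n$ on each homotopy group. Since $X$ is a finite CW complex these groups are finitely generated, so after tensoring with $\Q$ multiplication by $n$ becomes an isomorphism on every $\pi_k(SP^\infty(X)) \otimes \Q$. Because $SP^\infty(X)$ is weakly equivalent to $\prod_{k \ge 1} K(\widetilde{H}_k(X;\Z), k)$, this forces $\mu_n^*$ to be an isomorphism on $H^*(-;\Q)$. Second, Theorem~\ref{split1} applied at $n = \infty$ gives that $j^* \colon H^*(SP^\infty(X);\Q) \to H^*(X;\Q)$ is a split epimorphism. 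Combining, $(i_n \circ \delta_n)^* = j^* \circ \mu_n^*$ is a split epimorphism, and this equals $\delta_n^* \circ i_n^*$. Consequently $\delta_n^*$ is surjective, and if $s \colon H^*(X;\Q) \to H^*(SP^\infty(X);\Q)$ is a $\Q$-linear section of $j^* \circ \mu_n^*$, then $i_n^* \circ s$ is the desired section of $\delta_n^*$.

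The main obstacle is the rigorous verification that $\mu_n^*$ acts as an isomorphism on $H^*(-;\Q)$; in particular, one must be careful that ``multiplication by $n$ on homotopy'' transfers to ``invertible on cohomology'' in a controlled way. The cleanest route is the rational Eilenberg--MacLane decomposition of $SP^\infty(X)$ described above, which reduces the problem to the clear case of each factor $K(\widetilde{H}_k(X;\Z),k)$. An alternative, model-free approach uses the Hopf algebra structure on $H^*(SP^\infty(X);\Q)$: since $SP^\infty(X)$ is a commutative topological monoid, $\mu_n$ is a monoid homomorphism, it acts as multiplication by $n$ on the primitives (the rational Hurewicz image), and a graded-connected commutative Hopf algebra over $\Q$ is freely generated by its primitives, so multiplication by $n$ on primitives is invertible and hence so is $\mu_n^*$.
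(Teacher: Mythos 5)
Your proof is correct and follows essentially the same route as the argument the paper cites from~\cite[Section 4]{DJ}: factor $i_n\circ\delta_n$ as the multiplication-by-$n$ map on $SP^\infty(X)$ composed with the basepoint inclusion $j=\xi_\infty$, use Dold--Thom to see that multiplication by $n$ is a rational equivalence (being multiplication by $n$ on $\pi_*\cong\widetilde H_*(X;\Z)$, which is invertible after tensoring with $\Q$ since $X$ is finite and $SP^\infty(X)$ is a nilpotent $H$-space of finite type), and then combine with Theorem~\ref{split1} at $n=\infty$. The bookkeeping producing the section $i_n^*\circ s$ of $\delta_n^*$ is exactly right, and both justifications you offer for the invertibility of $\mu_n^*$ are valid.
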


Note that the proof of Proposition~\ref{split2} from~\cite[Proposition 4.3]{DJ} also works for cohomology with coefficients in $\R$. So, the results of this section that we will prove using Proposition~\ref{split2} in $\Q$-coefficients will also be true for $\R$-coefficients.

For our convenience, we now define the following.

\begin{defn}\label{szcldefn}
    Given a space $X$, ring $R$, and integer $m\ge 2$, we say that a class $u\in \Ker(\Delta_m^*:H^*(X^m;R)\to H^*(X;R))$ is \emph{special} if $u$ is the image of a direct sum
    \[
    \bigoplus_{j=1}^{p} \alpha_jx, \hspace{3mm} \text{where }\ 2\le p\le m, \text{ } \alpha_j\in \Z\setminus\{0\},  \ \text{and}\ x\in H^*(X;R),
    \]
    under a map $\phi:\oplus_{j=1}^{p} H^*(X;R)\to H^*(X^m;R)$ that is defined as a direct sum of the maps $\pi_j^*:H^*(X;R)\to H^*(X^m;R)$ induced, respectively, by the projections $\pi_j:X^m\to X$ onto the $j$-th summands, i.e.,
    \[
    \phi = \bigoplus_{j=1}^{p} \beta_j \hspace{1mm}\pi^*_{\sigma(j)}
    \]
    for some integers $\beta_j\ne 0$ and injective function $\sigma:\{1,\ldots,p\}\to\{1,\ldots,m\}$. 
    
    We call such a class $u\in H^*(X^m;R)$ an \emph{$m$-th $R$-special zero-divisor} of $X$. Then we define the \emph{$m$-th $R$-special zero-divisor cup-length} of $X$, denoted $\szcl_R^m(X)$, to be the largest integer $k$ for which there exist $k$ number of positive degree $m$-th $R$-special zero-divisors $u_i$ such that $u_1\smile \cdots \smile u_k\ne 0$. 
\end{defn}

By Theorem~\ref{tcbound}, $\szcl_R^m(X)\le \zcl_R^m(X)\le \TC_m(X)$. It is also easy to see that if $X\simeq Y$, then $\szcl_R^m(X)= \szcl_R^m(Y)$ for each ring $R$ and integer $m\ge 2$.

\begin{remark}
    For $m=2$, one usually takes $p=2$, $\phi=\pi_2^*-\pi_1^*$, and $\alpha_1=\alpha_2=1$ in the notations of Definition~\ref{szcldefn} --- see, for instance, the proofs of~\cite[Theorems 8, 9, 10, \& 13]{Far1} and~\cite[Theorem 1]{FTY}.
\end{remark}

\begin{ex}\label{szcleg}
    \begin{enumerate}
        \itemsep-0.30em 
        \item Let $X=\prod_{i=1}^nS^{k_i}$ for some $n\ge 1$ and $k_i\ge 1$. Then, $\TC_m(X)=\szcl_{\Q}^m(X)=n(m-1)+l_n$, where $l_n$ is the number of $k_i$ that are even, see~\cite[Theorem 3.10 \& Corollary 3.12]{BGRT}. 

        \item For any $g\ge 2$, $\TC_m(M_g)=\szcl_{\Q}^m(M_g)=2m$, see~\cite[Prop 3.2]{GGGHMR}.

        \item More generally, for $X=M_g\times \prod_{i=1}^nS^{k_i}$ where $g\ge 2$ and $n,k_i\ge 1$, $\TC_m(X)=\szcl_{\Q}^m(X)=m(n+2)-n+l_n$.
    \end{enumerate}
For an explicit description of the special zero-divisors involved in the above examples, we refer the reader to the proofs in~\cite[Section 7]{J1}, where the notion of the rational special zero-divisors was implicitly used. 
\end{ex}

We now obtain lower bounds to $\cat(SP^n(X))$ and $\TC_m(SP^n(X))$ in terms of the cohomology of $X$ and $X^m$, respectively.

\begin{proof}[Proof of Theorem~\ref{cc}]
    Let $\cu_R(X)=r$. Then there exist $\alpha_i\in H^{k_i}(X;R)$ for some $k_i>0$ such that $\alpha_1\smile\cdots\smile\alpha_r\ne 0$. By Theorem~\ref{split1}, there exist cohomology classes $\beta_i\in H^{k_i}(SP^n(X);R)$ such that $\xi_n^*(\beta_i)=\alpha_i$ for each $i$. Because,  the map $\xi_n^*$ is multiplicative, we must have
    \[
    \beta_1\smile\cdots\smile\beta_r\ne 0.
    \]
    Hence, $\cu_R(X)=r\le \cu_R(SP^n(X))\le\cat(SP^n(X))$ in view of Theorem~\ref{clbound}. In the case $R=\Q$, Proposition~\ref{split2} gives a section $s:H^*(X)\to H^*(SP^n(X))$ to the map $\delta_n^*$. So, $\alpha_i$ has a pre-image under $\delta_n^*$, namely $s(\alpha_i)\in H^{k_i}(SP^n(X))$. Therefore, we have $\cu_{\Q}(X)=r\le \cu_{\Q}(\text{Im}(\delta_n^*))$. The inequality $\cu_{\Q}(\text{Im}(\delta_n^*))\le \cu_{\Q}(X)$ is straightforward because $\text{Im}(\delta_n^*)$ is a subring of $H^*(X;\Q)$.

Next, let $\szcl^m_{\Q}(X)=k$. We have special zero-divisors $u_i\in H^*(X^m;\Q)$ such that $u_1\smile\cdots\smile u_k\ne 0$. For each $1\le i\le k$, suppose
    \[
    u_i=\phi_i\left(\bigoplus_{j=1}^{p_i} \alpha_j^i\hspace{1mm}x^i\right) \hspace{2.5mm} \text{for some}\hspace{2mm} \phi_i = \bigoplus_{j=1}^{p_i} \beta_j^i \hspace{1mm}\pi^*_{\sigma_i(j)},
    \]
in notations of Definition~\ref{szcldefn}, where $2\le p_i\le m$ and $\sigma_i:\{1,\ldots,p_i\}\to\{1,\ldots,m\}$ is an injective function. By Proposition~\ref{split2}, there exists $y^i\in H^*(SP^n(X);\Q)$ such that $\delta_n^*(y^i)=x^i\in H^*(X;\Q)$ for each $i$. Let $r_t:(SP^n(X))^m\to SP^n(X)$ be the projection onto the $t^{th}$ factor for each $1 \le t \le m$. Corresponding to each $u_i$ and $\phi_i$, define the map $\psi_i:\oplus_{j=1}^{p_i}H^*(SP^n(X);\Q)\to H^*((SP^n(X))^m;\Q)$ such that
    \[
    \psi_i = \bigoplus_{j=1}^{p_i} \beta_j^i \hspace{1mm} r^*_{\sigma_i(j)}, \hspace{2.5mm}\text{and let} \hspace{2mm} v_i=\psi_i\left(\bigoplus_{j=1}^{p_i} \alpha_j^i\hspace{1mm}y^i\right).
    \]
    For each $i$, the following diagram commutes, where $\pa_m:SP^n(X)\to (SP^n(X))^m$ denotes the diagonal inclusion and $\delta_n^m:X^m\to(SP^n(X))^m$ is the product map defined as $\delta_n^m(x_1,\ldots,x_m)=(\delta_n(x_1),\ldots,\delta_n(x_m))$.
    \begin{equation}\label{one11}
\begin{tikzcd}[contains/.style = {draw=none,"\in" description,sloped}]
\bigoplus_{j=1}^{p_i} H^*(SP^n(X);\Q)  \arrow{r}{\psi_i}  \arrow[swap]{d}{\bigoplus_{j=1}^{p_i} \hspace{0.5mm} \delta_n^* \hspace{1mm}}
&
H^*((SP^n(X))^m;\Q)  \arrow{r}{\pa_m^*} \arrow[swap]{d}{(\delta_n^m)^*}
&
H^*(SP^n(X);\Q) \arrow[swap]{d}{\delta_n^*}
\\
\bigoplus_{j=1}^{p_i} H^*(X;\Q)  \arrow{r}{\phi_i} 
&
H^*(X^m;\Q)  \arrow{r}{\Delta_m^*} 
&
H^*(X;\Q)
\end{tikzcd}
    \end{equation}
Clearly, $(\delta^m_n)^*(v_i)=u_i$ for each $i$. Hence, $v_1\smile\cdots\smile v_k\ne 0$. Also, $\pa_m^*(v_i)=0$ because $\Delta_m^*(u_i)=0$ and the top (resp. bottom) row in~\eqref{one11} when restricted on the left to any of the summand $H^*(SP^n(X);\Q)$ (resp. $H^*(X;\Q))$ is an isomorphism (we refer to the proof of~\cite[Proposition 7.1]{J1} for more details). Thus, each $v_i$ is \emph{special} and therefore, $\szcl_{\Q}^m(X)=k\le \min\{\szcl_{\Q}^m(SP^n(X)),\szcl^m_{\Q}(\text{Im}((\delta_n^m)^*))\}$. The inequality $\szcl^m_{\Q}(\text{Im}((\delta_n^m)^*))\le \szcl_{\Q}^m(X)$ is obvious.
\end{proof}

\begin{remark}
    We use special zero-divisors (in the sense of Definition~\ref{szcldefn}), instead of all zero-divisors, in the proof of Theorem~\ref{cc} primarily because of the following reason: while it can be shown using Proposition~\ref{split2} and the Künneth formulae that $(\delta_{n!}^m)^*$ is an epimorphism in rational cohomology, we don't know if the pre-images of (ordinary) zero-divisors of $X$ under $(\delta_{n!}^m)^*$ are zero-divisors of $SP^n(X)$. However, as demonstrated in the proof of Theorem~\ref{cc}, it is easy to show that the pre-images of special zero-divisors of $X$ under $(\delta_{n!}^m)^*$ are special zero-divisors of $SP^n(X)$.

    A referee informed us that zero-divisors similar to our special zero-divisors have been studied by~\cite{Ra},~\cite{CS}, who defined, for $z\in H^*(Z;\mathbb{\Q})$, ``basic zero-divisors'' $\pi_i^*(z)-\pi_j^*(z)\in H^*(Z^m;\mathbb{\Q})$ and showed that $\Ker(\Delta_m^*)$ is the ideal generated by all basic zero-divisors of $Z$. In view of this, it seems plausible that $\szcl^m_{\Q}(X)=\zcl_{\Q}^m(X)$. However, we don't pursue the inequality $\zcl_{\Q}^m(X)\le\szcl_{\Q}^m(X)$ in this paper.
\end{remark}

Theorems~\ref{cc},~\ref{clbound}, and~\ref{tcbound2} imply the following.

\begin{cor}\label{veryobv}
    For any finite CW complex $X$ and integers $m\ge 2$ and $n\ge 1$, $\cu_{\Q}(X)\le\cat(\delta_n)$ and $\szcl_{\Q}^m(X)\le\TC_m(\delta_n)$.
\end{cor}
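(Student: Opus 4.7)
The plan is to read Corollary~\ref{veryobv} as a direct assembly of the two inequalities already in hand: the cohomological lower bound for $\cat$ of a map from Theorem~\ref{clbound}, the cohomological lower bound for $\TC_m$ of a map from Theorem~\ref{tcbound2}, and the identifications of these image cup-lengths with invariants of $X$ itself proved in Theorem~\ref{cc}. Applying Theorem~\ref{clbound} to the diagonal embedding $\delta_n:X\to SP^n(X)$ with $R=\Q$ gives the inequality $c\ell_{\Q}(\textup{Im}(\delta_n^*))\le\cat(\delta_n)$. Theorem~\ref{cc} asserts that $c\ell_{\Q}(\textup{Im}(\delta_n^*))=c\ell_{\Q}(X)$, and composing these two facts delivers the first inequality.

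For the second inequality, I would apply Theorem~\ref{tcbound2} with $f=\delta_n$ and $R=\Q$. The relevant piece of that theorem is the lower bound
\[
zc\ell^m_{\Q}\left(\textup{Im}\left(\left(\delta_n^m\right)^*\right)\right)\le \TC_m(\delta_n).
\]
Theorem~\ref{cc} supplies the equality $zc\ell^m_{\Q}(\textup{Im}((\delta_n^m)^*))=zc\ell^m_{\Q}(X)$, and combining these two yields the stated bound $zc\ell^m_{\Q}(X)\le \TC_m(\delta_n)$.

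There is no real obstacle beyond checking that both inputs are directly quotable in the form needed; in particular, Theorem~\ref{cc} is stated with precisely the cup-lengths of $\textup{Im}(\delta_n^*)$ and $\textup{Im}((\delta_n^m)^*)$ that appear in Theorems~\ref{clbound} and~\ref{tcbound2}, so the two chains are immediate. Thus the proof is essentially a one-line citation of these three prior results, and I would present it as such, without further computation.
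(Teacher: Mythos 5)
Your proof is correct and follows exactly the approach the paper intends: the paper's proof is simply the one-line citation "Theorems~\ref{cc},~\ref{clbound}, and~\ref{tcbound2} imply the following," and you have unpacked precisely that chain of implications in both cases, with the correct choice $R=\Q$ and $f=\delta_n$.
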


We conclude this section by making a few simple observations.

\begin{remark}\label{sharp}
    For each $n\ge 1$, it is known that $SP^n(S^1)\simeq S^1$ (see~\cite{KS}). Hence, $\TC_m(SP^n(S^1))=\szcl_{\Q}^m(S^1)=m-1$ (see~\cite{Ru}) and $\cat(SP^n(S^1))=\cu_{\Q}(S^1)=1$. Thus, the lower bounds obtained in Theorem~\ref{cc} and Corollary~\ref{veryobv} are sharp.
\end{remark}

\begin{ex}
    Due to Corollary~\ref{veryobv}, $\TC_m(\delta_n)$ (resp. $\cat(\delta_n)$) must agree with $\TC_m(X)$ (resp. $\cat(X)$) whenever $\szcl^m_{\Q}(X)=\TC_m(X)$ (resp. $\cu_{\Q}(X)=\cat(X)$). In particular, this holds when $X$ is a simply connected symplectic manifold, a finite product of spheres, or a symmetric product of orientable surfaces. For the first two classes, see~\cite[Section 3]{BGRT}; for the last class, see proofs of Theorems~\ref{aa} and~\ref{oldknown}.
\end{ex}

\section{Sequential distributional complexity}\label{appl}

A lower bound to the $m$-th sequential distributional complexity, $\dTC_m(X)$, for each $m\ge 2$ was obtained in~\cite[Theorem 4.4]{J1} in terms of the Alexander--Spanier cohomology of $SP^{k}(X^m)$. In this section, we will prove Theorem~\ref{dd}, which gives a lower bound to $\dTC_m(X)$ in terms of the $m$-th zero-divisors of $SP^{k}(X)$.

Let us fix some integers $m,n\ge 2$. For each $1 \le i \le m$, given the projection maps $\pi_i:Y^m\to Y$, let $r_i=SP^{n!}(\pi_i):SP^{n!}(Y^m)\to SP^{n!}(Y)$ be the induced maps. Define $r:SP^{n!}(Y^m)\to(SP^{n!}(Y))^m$ as the map $r=(r_1,\ldots,r_m)$. Suppose that $\delta_{n!}:Y\to SP^{n!}(Y)$ and $\pa_m:Y^m\to SP^{n!}(Y^m)$ are the respective diagonal inclusions. Then, $r \circ\pa_m=\delta_{n!}^m$ for the product map $\delta_{n!}^m:Y^m\to (SP^{n!}(Y))^m$.

Let us fix a map $f:X\to Y$ and consider the following two pullback diagrams. 
\begin{equation}\label{two22}
\begin{tikzcd}[contains/.style = {draw=none,"\in" description,sloped}]
\mathcal{D}_{n,m} \arrow{r}{a} \arrow[swap]{d}{\sigma_n} 
& 
SP^{n!}(P(Y)) \arrow[swap]{d}{\zeta_n}
\\
X^m  \arrow{r}{\pa_m f^m}
& 
SP^{n!}(Y^m),
\end{tikzcd}
\qquad
\begin{tikzcd}[contains/.style = {draw=none,"\in" description,sloped}]
\mathcal{Z}_{n,m} \arrow{r}{b} \arrow[swap]{d}{\theta_n} 
& 
SP^{n!}(P(Y)) \arrow[swap]{d}{r \zeta_n}
\\
X^m  \arrow{r}{\delta_{n!}^m f^m}
& 
(SP^{n!}(Y))^m.
\end{tikzcd}
\end{equation}

Here, $\zeta_n=SP^{n!}(\tau_m)$, where $\tau_m:P(Y)\to Y^m$ is the fibration defined as
\[
\tau_m(\phi)=\left(\phi(0),\phi\left(\frac{1}{m-1}\right),\ldots, \phi\left(\frac{m-2}{m-1}\right),\phi(1)\right).
\] 

We will use the same notations in the above pullback diagrams when the space $X^m$ is replaced with any subspace $A_i\subset X^m$.

\begin{lemma}\label{extra1}
    Given a map $f:X\to Y$ and integer $m\ge 2$, if $\dTC_m(f)<n$, then there exist sets $A_1,A_2,\ldots,A_n$ that cover $X^m$ such that over each $A_i$ there is a section to the map $\theta_n$ from~\eqref{two22}.
\end{lemma}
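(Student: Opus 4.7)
The plan is to unpack $\dTC_m(f) < n$ to extract a distributional motion planner on $X^m$, stratify $X^m$ by the support size of this planner, and on each stratum produce a section of $\theta_n$ by exploiting the divisibility $k \mid n!$ for every $1 \le k \le n$. Since $\dTC_m(f) \le n-1$, the definition in Section~\ref{2c} provides a continuous map $s_m : X^m \to \B_n(P(Y))$ such that $s_m(\bar{x}) \in \B_n(P(f(x_1), \ldots, f(x_m)))$ for every $\bar{x} = (x_1, \ldots, x_m) \in X^m$. I would then set
\[
A_k := \{\bar{x} \in X^m \mid |\supp(s_m(\bar{x}))| = k\}, \qquad 1 \le k \le n,
\]
so that $\{A_1, \ldots, A_n\}$ is a (set-theoretic) cover of $X^m$.

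By the pullback description of $\mathcal{Z}_{n,m}$ in~\eqref{two22}, a section of $\theta_n$ over $A_k$ is equivalent to a continuous map $\varphi_k : A_k \to SP^{n!}(P(Y))$ satisfying $(r \circ \zeta_n) \circ \varphi_k = (\delta_{n!}^m \circ f^m)|_{A_k}$. For $\bar{x} \in A_k$, writing $s_m(\bar{x}) = \sum_{i=1}^k p_i(\bar{x}) \delta_{\phi_i(\bar{x})}$ with $p_i(\bar{x}) > 0$ and distinct $\phi_i(\bar{x})$, I would define
\[
\varphi_k(\bar{x}) = \bigl[\underbrace{\phi_1(\bar{x}), \ldots, \phi_1(\bar{x})}_{n!/k}, \ldots, \underbrace{\phi_k(\bar{x}), \ldots, \phi_k(\bar{x})}_{n!/k}\bigr] \in SP^{n!}(P(Y)),
\]
each path repeated $n!/k$ times, which is a positive integer since $k \le n$. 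This factors $\varphi_k$ as the support map $A_k \to SP^k(P(Y))$, $\bar{x} \mapsto [\phi_1(\bar{x}), \ldots, \phi_k(\bar{x})]$, followed by the obvious $(n!/k)$-fold repetition $SP^k(P(Y)) \to SP^{n!}(P(Y))$, the latter manifestly continuous. The compatibility with $r \circ \zeta_n$ is then immediate: each $\phi_i(\bar{x})$ lies in $P(f(x_1), \ldots, f(x_m))$, so evaluating $\varphi_k(\bar{x})$ at time $(j-1)/(m-1)$ yields the $n!$-tuple $[f(x_j), \ldots, f(x_j)] = \delta_{n!}(f(x_j))$ for each $1 \le j \le m$.

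The only real technical point, and hence the main obstacle, is the continuity of the support map restricted to the stratum $\B_k(P(Y)) \setminus \B_{k-1}(P(Y))$, valued in $SP^k(P(Y))$. I expect this to follow from a short Lévy--Prokhorov estimate: given $\mu_l \to \mu$ in L--P within this stratum, choosing $\varepsilon$ smaller than one-third of the minimum pairwise distance in $\supp(\mu)$, the Portmanteau characterisation forces each $\varepsilon$-ball around a limit support point to eventually carry positive $\mu_l$-mass and hence to contain at least one support point of $\mu_l$; since $|\supp(\mu_l)| = k$ exactly, these disjoint balls contain exactly one support point each, and letting $\varepsilon \to 0$ yields convergence of the supports in $SP^k(P(Y))$. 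Everything else in the argument is purely formal once this support-continuity is in hand.
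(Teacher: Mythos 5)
Your proposal is correct and takes essentially the same route as the paper: stratify $X^m$ by the exact support size of the distributional motion planner, send each stratum $A_k$ into $SP^{n!}(P(Y))$ by weighting each support path with multiplicity $n!/k$, and invoke the universal property of the pullback defining $\mathcal{Z}_{n,m}$. The paper compresses the continuity discussion by citing~\cite[Lemma~4.3]{J1} (``proceeding as in''), whereas you spell out the L\'evy--Prokhorov argument for continuity of the support map on a fixed stratum; that is precisely the technical content the citation carries, so the two arguments coincide in substance.
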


\begin{proof}
Suppose that $\dTC_m(f)<n$. Then there exists a map $s_m:X^m\to \B_n(P(Y))$ as defined in Section~\ref{2c}. Proceeding as in~\cite[Lemma 4.3]{J1}, we define sets
    \[
    A_i=\left\{ \ov{x} = (x_1, \ldots, x_m) \in X^m \mid |\supp (s_m(\ov{x}))| = i\right\}
\]
for each $1\le i \le n$ and corresponding maps $H_i : A_i \to SP^{n!}(P(Y))$ such that
\[
H_i (\ov{x}) = \sum_{\phi \hspace{1mm} \in \hspace{1mm} \supp (s_m(\ov{x}))} \hspace{1mm} \frac{n!}{i} \phi.
\]
Clearly, $\{A_i\}_{i=1}^n$ is a cover of $X^m$ and $\zeta_n \circ H_i=\pa_m f^m$ for each $1 \le i \le n$, where we use $f^m$ to denote the restriction of $f^m$ to $A_i$. Then, $(r\circ \zeta_n)\circ H_i=\delta_{n!}^m \circ f^m$. So, by the universal property of the pullback on the right in~\eqref{two22}, there exists a map $K_i:A_i\to \mathcal{Z}_{n,m}$ such that $\theta_n \circ K_i$ is the identity map on $A_i$. 
\end{proof}

\begin{proof}[Proof of Theorem~\ref{dd}]
Let $\Delta^{n!}:SP^{n!}(Y)\to(SP^{n!}(Y))^m$ be the diagonal map. Define a homotopy equivalence $g:SP^{n!}(Y)\to SP^{n!}(P(Y))$ as 
    \[
    g\left[a_1, \ldots, a_{n!}\right] = \left[c_{a_1}, \ldots, c_{a_{n!}}\right],
    \]
    where $c_{a_j}$ denotes the trivial loop at $a_j\in Y$, see~\cite[Lemma 4.2]{J1}. 
If we assume that $\dTC_m(f)<n$, then by Lemma~\ref{extra1}, we get sets $A_i\subset X^m$ and maps $K_i:A_i\to \mathcal Z_{n,m}$ such that $\bigcup_{i=1}^nA_i=X^m$ and $\theta_n\circ K_i=\mathbbm{1}_{A_i}$. For each fixed $1\le i \le n$, we have the following commutative diagram in $R$-cohomology.
    \begin{equation}\label{three33}
    \begin{tikzcd}[every arrow/.append style={shift left}]
H^{k_i}(A_i;R)  \arrow{r}{\theta_n^*}
& 
H^{k_i}(\mathcal{Z}_{n,m};R) \arrow{l}{{K_i^* \vphantom{1}}}
\\
H^{k_i}((SP^{n!}(Y))^m;R) \arrow{r}{(r\zeta_n)^*} \arrow[swap]{u}{(\delta_{n!}^m f^m)^*} \arrow{d}{(\Delta^{n!})^*}
& 
H^{k_i}(SP^{n!}(P(Y));R) \arrow[swap]{u}{b^*} \arrow{dl}{g^*}
\\
H^{k_i}(SP^{n!}(Y);R) 
& 
\end{tikzcd}
    \end{equation}
By hypothesis, $(\Delta^{n!})^*(\alpha_i^*)=0$. Because $g^*$ is an isomorphism, $(r \circ \zeta_n)^*(\alpha_i^*)=0$. From the commutative square in~\eqref{three33}, we get that
\[
(\delta_{n!}^m \circ f^m)^*(\alpha_i^*)=(K_i^*\circ b^*)((r \circ \zeta_n)^*(\alpha_i^*))=0.
\]
Seeing the map $\delta_{n!}^m  \circ  f^m:A_i\to (SP^{n!}(Y))^m$ as the inclusion, we have the following long exact sequence of the pair $((SP^{n!}(Y))^m,A_i)$ in Alexander--Spanier cohomology:
\[
\cdots \to H^{k_i}((SP^{n!}(Y))^m,A_i; R) \xrightarrow{j_i^*} H^{k_i}((SP^{n!}(Y))^m;R) \xrightarrow{(\delta_n^m f^m)^*} H^{k_i}(A_i;R) \to \cdots,
\]
where $j_i:(SP^{n!}(Y))^m\hookrightarrow ((SP^{n!}(Y))^m,A_i)$. Since $(\delta_{n!}^m  \circ f^m)^*(\alpha_i^*)=0$, there exists a class $\ov{\alpha_i^*} \in H^{k_i}((SP^{n!}(Y))^m),A_i; R)$ such that $j_i^*(\ov{\alpha_i^*}) = \alpha_i^*$. Also, let $(\delta_{n!}^m  \circ f^m)^*(\ov{\alpha_i^*})=\ov{\alpha_i}\in H^{k_i}(X^m,A_i;R)$. For the maps $j=\sum j_i$ and $j'=\sum j_i'$, where $j_i':X^m\hookrightarrow (X^m,A_i)$, and integer $k = \sum k_i$, we get the following commutative diagram in Alexander--Spanier cohomology.
\begin{equation}\label{four44}
\begin{tikzcd}[contains/.style = {draw=none,"\in" description,sloped}]
H^{k}(X^m;R)  
& 
\arrow[swap]{l}{(j')^*} H^{k}\left(X^m,\bigcup_{i=1}^{n}A_i;R\right)  
\\
H^{k}\left((SP^{n!}(Y))^m;R\right)  \arrow[swap]{u}{(\delta^m_{n!} f^m)^{*}}
& 
H^k \left((SP^{n!}(Y))^m,\bigcup_{i=1}^n A_{i};R\right). \arrow[swap]{u}{(\delta^m_{n!} f^m)^{*}} \arrow[swap]{l}{j^*} 
\end{tikzcd}
\end{equation}
The cup product $\ov{\alpha_1^*} \smile \cdots \smile \ov{\alpha_n^*}$ in the bottom-right part of~\eqref{four44} goes to the non-zero cup product $\alpha_1 \smile \cdots \smile \alpha_n \neq 0$ in the top-left part part of~\eqref{four44}. But in the process, it factors through $\ov{\alpha_1} \smile \cdots \smile \ov{\alpha_n} \in H^k (X^m,X^m;R) = 0$. This is a contradiction. Therefore, we must have $\dTC_m(X)\ge n$.
\end{proof}

Basically, for the diagonal map $\Delta^{n!}:SP^{n!}(Y)\to(SP^{n!}(Y))^m$ and the map $\delta_{n!}^m \circ f^m:X^m\to(SP^{n!}(Y))^m$, we have for each fixed $m\ge 2$ and ring $R$ that
\[
\cu_R(\textup{Im}((\delta_{n!}^m \circ f^m)^*:\textup{Ker}((\Delta^{n!})^*)\to H^*(X^m;R))) \ge n \implies\dTC_m(f)\ge n.
\]

\begin{proof}[Proof of Corollary~\ref{dd1}]
Let $\szcl^m_{\Q}(X)=n$. Then there are $m$-th rational special zero-divisors of $X$, say $u_i\in H^*(X^m;\Q)$, such that $u_1\smile\cdots\smile u_n\ne 0$. By the proof of Theorem~\ref{cc}, for each $i$, there exists an $m$-th rational zero-divisor of $SP^{n!}(X)$, say $v_i\in H^*((SP^{n!}(X))^m;\Q)$, such that $(\delta_{n!}^m)^*(v_i)=u_i$. On finite CW complexes, singular cohomology and Alexander--Spanier cohomology coincide,~\cite{Sp}. Hence, we can apply Theorem~\ref{dd}, by taking $f$ to be the identity map on $X$ and $R=\Q$, with cohomology classes $v_i$ for $1\le i \le n$ to get $\dTC_m(X)\ge n=\szcl_{\Q}^m(X)$.
\end{proof}

\section{Symmetric products of orientable surfaces}\label{symorient}

Let $M_g$ denote the orientable surface of genus $g\ge 0$. In this section, we shall prove Theorem~\ref{aa}, which computes the $m$-th sequential topological complexity of the symmetric $n$-th product $SP^n(M_g)$ for each $m,n\ge 2$ and $g\ge 0$ using the $m$-th rational zero-divisors of $SP^n(M_g)$.

From now onwards, for brevity, we skip the symbol ``\hspace{0.5mm}$\smile$\hspace{0.5mm}'' while writing cup products, i.e., we let $xy:=x\smile y$.

\subsection{Cohomology}\label{cohomg} Throughout this section, we let $H^*(X):=H^*(X;\Z)$ and $H_*(X):=H_*(X;\Z)$. 

The CW structure on the orientable surface $M_g$ consists of one $0$-cell, $2g$ $1$-cells, and one $2$-cell attached by the product of commutators $[a_1,b_1]\cdots[a_g,b_g]$. Here, $a_i,b_i\in H_1(\T_i)$ are the generators for each $i$ (and $\T_i$ is a $2$-dimensional torus). Let us write $J:=\prod_{i=1}^g \T_i$ for the Jacobian of $M_g$, and $\text{proj}_i:J\to \T_i$ for the projection map for each $i$. Let $\mu_n:SP^n(M_g)\to J$ be the Abel--Jacobi map (see, for instance,~\cite[Sections 2.C \& 2.D]{DDJ} for its description).

Let $a_i^*$ and $b_i^*$ denote the Hom duals of $a_i$ and $b_i$, respectively. We use the same notations for their images in $H^1(SP^n(M_g))$ under the induced map $(\text{proj}_i \circ \mu_n)^*$. Let $c\in H_2(M_g)$ denote the fundamental class and its image in $H_2(SP^n(M_g))$ under the map induced in homology by the basepoint inclusion $M_g\hookrightarrow SP^n(M_g)$. We denote the Hom dual of $c\in H_2(SP^n(M_g))$ by $c^*$. Note that in $H^*(M_g)$, we have $a_i^*b_j^*=a_i^*a_j^*=b_i^*b_j^*=0$ for each $i\ne j$ and $(a_i^*)^2=(b_i^*)^2=0$,~\cite{Ha}. Also, $c^*$ commutes with $a_i^*$ and $b_j^*$, and $a_i^*$ and $b_j^*$ anti-commute with each other in $H^*(SP^n(M_g))$.

The following famous theorem is due to Macdonald,~\cite{Mac}.

\begin{thm}\label{mc}
    The integral cohomology ring $H^*(SP^n(M_g))$ is the quotient of $\Lambda_{\Z}(a_1^*,b_1^*,\dots,a_g^*,b_g^*)\otimes \Z[c^*]$ by the following relation:
    \[
    a_{i_1}^*\cdots a_{i_\ell}^*b_{j_1}^*\cdots b_{j_m}^*(c^*-a_{k_1}^*b_{k_1}^*)\cdots(c^*-a_{k_r}^*b_{k_r}^*)(c^*)^s=0
    \]
    whenever $\ell+m+2r+s\ge n+1$ for any collection of distinct indices $i_1,\dots,i_\ell$, $j_1,\dots,j_m$, and $k_1,\dots, k_r$.
\end{thm}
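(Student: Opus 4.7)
The plan is to build the cohomology ring of $SP^n(M_g)$ in two stages: first compute $H^*(SP^\infty(M_g))$ via Dold--Thom, and then determine how the ring degenerates upon passing from $SP^\infty(M_g)$ down to the finite symmetric product $SP^n(M_g)$ along the basepoint inclusion $\iota_n: SP^n(M_g) \hookrightarrow SP^\infty(M_g)$.

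By the Dold--Thom theorem, $\pi_k(SP^\infty(M_g)) = \wt{H}_k(M_g;\Z)$, which is $\Z^{2g}$ for $k=1$, $\Z$ for $k=2$, and zero otherwise. Hence $SP^\infty(M_g)$ is an Eilenberg--MacLane product $K(\Z^{2g},1) \times K(\Z,2) \simeq (S^1)^{2g} \times \C P^\infty$, whose integral cohomology ring is exactly $\Lambda_{\Z}(a_1^*,b_1^*,\ldots,a_g^*,b_g^*) \otimes \Z[c^*]$. Naturality identifies the $a_i^*, b_i^*$ as Hom-duals pulled back along the Abel--Jacobi map and $c^*$ as the Hom-dual of the fundamental class via the basepoint inclusion $M_g \hookrightarrow SP^\infty(M_g)$, in agreement with the notation of the statement. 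The skeletal observation \eqref{skeleton} (applied to a one-vertex $2$-dimensional CW model for $M_g$) shows that $\iota_n^*$ is already an isomorphism in degrees below $n$. I would then promote this to surjectivity of $\iota_n^*$ in all degrees by exploiting the Abel--Jacobi map $\mu_n: SP^n(M_g) \to J$: for $n > 2g-2$, the map $\mu_n$ is a $\C P^{n-g}$-bundle, so Leray--Hirsch yields a surjection $H^*(J) \otimes \Z[c^*] \twoheadrightarrow H^*(SP^n(M_g))$, and the left-hand side embeds canonically into $H^*(SP^\infty(M_g))$. For $n \le 2g-2$, surjectivity descends by naturality through $\iota_n = \iota_{n+1} \circ j_n$, where $j_n: SP^n(M_g) \hookrightarrow SP^{n+1}(M_g)$.

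The heart of the proof is identifying the kernel of $\iota_n^*$ as the ideal generated by the Macdonald relations. The purely monomial relations (the $r=0$ case with $\ell + m + s \ge n+1$) are dimensional in origin: $SP^n(M_g)$ is a closed orientable $2n$-manifold, so classes of degree exceeding $2n$ vanish, and Poincar\'e duality fixes the top-degree structure. The more subtle relations involving the factors $(c^* - a_k^* b_k^*)$ I would extract from the Leray--Hirsch presentation in the projective-bundle range: the defining polynomial of the Abel--Jacobi bundle in $c^*$ has coefficients given by the Chern classes of a rank-$(n-g+1)$ tautological bundle on $J$, and those Chern classes coincide (after identifying $H^*(J)$ with the exterior algebra on the $a_i^*, b_i^*$) with the elementary symmetric polynomials in the products $a_k^* b_k^*$, producing the factorization $\prod_k (c^* - a_k^* b_k^*)$ in the relations. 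For $n \le 2g-2$, the relations are pulled back from $SP^{n+1}(M_g)$ via $j_n^*$ and augmented by the additional dimensional relations in top degree.

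The hardest part will be the geometric identification of the Chern classes of the Abel--Jacobi projective bundle with the elementary symmetric polynomials in the classes $a_k^* b_k^*$. This requires a substantive computation in the classical theory of Riemann surfaces, involving the Poincar\'e line bundle on $M_g \times J$ and a derived-pushforward description of the bundle in question, and it is exactly where Macdonald's original paper \cite{Mac} does the heavy lifting. Once this identification is in hand, a verification that the Poincar\'e polynomial of the claimed quotient matches Macdonald's explicit product formula for $P(SP^n(M_g),t)$ confirms that no further relations are needed, completing the proof.
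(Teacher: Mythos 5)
The paper does not actually prove Theorem~\ref{mc}; it states it with the line ``The following famous theorem is due to Macdonald, \cite{Mac}'' and gives no argument. So there is no in-paper proof to compare with --- you are reconstructing Macdonald's original argument from scratch. Your outline does broadly track what Macdonald does (Dold--Thom for $SP^\infty$, the Abel--Jacobi projective-bundle structure for $n\ge 2g-1$, the Chern class / Poincar\'e line bundle computation, and a Poincar\'e polynomial cross-check), and the overall architecture is the right one.

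That said, as written there are several real gaps. First, your deduction of surjectivity of $\iota_n^*$ in the range $n\le 2g-2$ is circular: writing $\iota_n=\iota_{n+1}\circ j_n$ and knowing $\iota_{n+1}^*$ is onto only gives surjectivity of $\iota_n^*$ if you already know $j_n^*$ is onto, which is exactly what needs to be shown. (The true input here is the Steenrod/Dold stable splitting of $\Sigma SP^{n+1}(X)$, as in \cite{Do2}, giving that $j_n^*$ is a split epimorphism --- this must be cited or proved.) Second, the claim that for $n\le 2g-2$ one passes from $SP^{n+1}$ to $SP^n$ by ``augmenting by additional dimensional relations in top degree'' is not accurate: the new relations are precisely those monomials with $\ell+m+2r+s=n+1$, and these occur across a range of cohomological degrees strictly below $2n$, not only in top degree; Macdonald needs a genuinely separate argument (via the Betti number computation and the geometry of the Abel--Jacobi fibers) in this range. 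Third, the Poincar\'e polynomial comparison only finishes the proof if $H^*(SP^n(M_g);\Z)$ is known to be torsion-free, a fact used implicitly but not established. Finally, as you acknowledge, the identification of the Chern classes of the pushforward of the (twisted) Poincar\'e bundle with $\pm$ the elementary symmetric polynomials in the $a_k^*b_k^*$ --- equivalently, that the total Chern class is $\prod_k(1-a_k^*b_k^*)$, i.e.\ $e^{-\theta}$ --- is the genuinely hard analytic input and is only gestured at. As an outline of a citation-level result this is serviceable, but it is not a complete proof.
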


The following statement was proven in~\cite[Proposition 3.3]{DDJ} using the above description. 

\begin{prop}\label{cup1}
In the integral cohomology ring $H^*(SP^n(M_g))$,
\vspace{-2.0mm}
\begin{enumerate}
        \itemsep-0.30em 
\item if $n\le g$, then the cup product $a_1^*b_1^*\cdots a_n^*b_n^*$ is non-zero, and
\item if $n>g$, then the cup product $a_1^*b_1^*\cdots a_g^*b_g^*(c^*)^{n-g}$ is non-zero.
\end{enumerate}
\end{prop}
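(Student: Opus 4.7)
The plan is to identify each of the stated top-degree cup products with a nonzero generator of $H^{2n}(SP^n(M_g);\Z)\cong\Z$, which is valid because $SP^n(M_g)$ is a closed orientable $2n$-manifold.

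For part $(1)$, where $n\le g$, I would pull back along the orbit quotient $\Phi\colon M_g^n\to SP^n(M_g)$. Since $\mu_n\circ\Phi$ equals the sum (in $J$) of the single-variable Abel--Jacobi maps over the $n$ coordinates, one has $\Phi^*(a_i^*)=\sum_{j=1}^n \pi_j^*(a_i^*)$, and similarly for $b_i^*$, where $\pi_j\colon M_g^n\to M_g$ denotes the $j$-th projection. Expanding
\[
\Phi^*\!\left(a_1^*b_1^*\cdots a_n^*b_n^*\right)=\prod_{i=1}^n\Bigl(\sum_{j,k=1}^n \pi_j^*(a_i^*)\,\pi_k^*(b_i^*)\Bigr)
\]
and using the cup relations on $M_g$ (namely $a_i^*a_j^*=b_i^*b_j^*=0$ for all $i,j$ and $a_i^*b_j^*=\delta_{ij}c^*_{M_g}$), the surviving terms correspond bijectively to permutations $\sigma\in S_n$: one assigns to each slot $\ell$ the unique $i=\sigma(\ell)$ so that both $\pi_\ell^*(a_{i}^*)$ and $\pi_\ell^*(b_{i}^*)$ land at that slot, producing the contribution $\prod_\ell c^{*(\ell)}_{M_g}=[M_g^n]^*$. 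Since adjacent pairs $(a,b)$ are degree-$2$ blocks that commute freely, all $n!$ contributions add with positive sign, giving $\Phi^*(a_1^*b_1^*\cdots a_n^*b_n^*)=n!\,[M_g^n]^*\neq 0$, and so $a_1^*b_1^*\cdots a_n^*b_n^*\neq 0$ in $H^{2n}(SP^n(M_g))$.

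For part $(2)$, where $n>g$, the plan is first to combine Macdonald's relations over subsets of $\{1,\dots,g\}$ to derive the identity $a_1^*b_1^*\cdots a_g^*b_g^*\,(c^*)^{n-g}=(c^*)^n$. Specifically, for each $K\subseteq\{1,\dots,g\}$ with $1\le|K|\le g$, applying Theorem~\ref{mc} with $\ell=m=0$, $r=|K|$, $s=n-|K|$ gives $\prod_{i\in K}(c^*-a_i^*b_i^*)(c^*)^{n-|K|}=0$. Setting $A_S:=(c^*)^{n-|S|}\prod_{i\in S}a_i^*b_i^*$, this expands as $\sum_{S\subseteq K}(-1)^{|S|}A_S=0$; M\"obius inversion (equivalently, induction on $|K|$ starting at $|K|=1$) then forces $A_S=A_\emptyset=(c^*)^n$ for every $S\subseteq\{1,\dots,g\}$. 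Taking $S=\{1,\dots,g\}$ yields the claimed identity, and it remains to establish $(c^*)^n\neq 0$.

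For the latter, consider the degree-one collapse map $p\colon M_g\to S^2$ obtained by crushing the $1$-skeleton to the basepoint, and the induced continuous map $P_n:=SP^n(p)\colon SP^n(M_g)\to SP^n(S^2)=\C P^n$. Using naturality $P_n\circ i_n=i_n'\circ p$ (where $i_n,i_n'$ denote the basepoint inclusions) and the fact that the Hom-dual of $[\C P^1]=(i_n')_*[S^2]\in H_2(\C P^n)$ is the hyperplane class $h$, one verifies $P_n^*(h)=c^*$. A local-degree calculation at a generic point $[y_1,\dots,y_n]\in\C P^n$ with distinct off-basepoint coordinates yields $\deg(P_n)=1$, so $P_n^*$ carries a generator of $H^{2n}(\C P^n;\Z)$ to a generator of $H^{2n}(SP^n(M_g);\Z)$; therefore $(c^*)^n=P_n^*(h^n)$ is nonzero. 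The main technical obstacles are the sign accounting in part $(1)$, to ensure the $n!$ diagonal terms add constructively rather than cancel, and the identification $P_n^*(h)=c^*$ in part $(2)$, which requires unpacking the implicit Hom-dual definition of $c^*$ and verifying that $P_n^*(h)$ annihilates all other basis classes of $H_2(SP^n(M_g))$---these vanish under $(P_n)_*$ since $p$ kills $H_1(M_g)$.
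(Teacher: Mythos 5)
Your argument takes a genuinely different, more geometric route than the paper's (the paper defers to \cite{DDJ}, where the proof runs algebraically inside Macdonald's presentation).

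For part (1), pulling back along the $n!$-fold branched cover $\Phi\colon M_g^n\to SP^n(M_g)$ and computing $\Phi^*(a_1^*b_1^*\cdots a_n^*b_n^*)=n!\,[M_g^n]^*$ is clean and correct: each surviving term in the expansion is a product of degree-$2$ blocks $\pi_{j_i}^*(a_i^*b_i^*)$, so there is no sign cancellation, and the result is a nonzero multiple of the top class of $M_g^n$. One should record, as a lemma, that $\mu_n\circ\Phi$ is the additive symmetrization of the Abel--Jacobi map, so that $\Phi^*(a_i^*)=\sum_j\pi_j^*(a_i^*)$; this is standard but carries the whole argument.

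For part (2), the inclusion--exclusion over subsets $K\subseteq\{1,\dots,g\}$ of Macdonald's relations, giving $A_S=A_\emptyset$ for all $S$ and hence $a_1^*b_1^*\cdots a_g^*b_g^*(c^*)^{n-g}=(c^*)^n$, is elegant and correct (the hypotheses $1\le|K|\le g<n$ ensure both $r+s\ge n+1$ and $s=n-|K|\ge 1$, so every needed relator is admissible). The one genuine gap is the identification $P_n^*(h)=c^*$ for the collapse map $P_n=SP^n(p)\colon SP^n(M_g)\to\C P^n$. What you verify directly from $P_n\circ i_n=i_n'\circ p$ is only $\langle P_n^*(h),c\rangle=1$; for $(P_n^*(h))^n=(c^*)^n$ you must rule out a component of $P_n^*(h)$ in the $\Lambda^2 H^1(M_g)$-part of $H^2(SP^n(M_g))$. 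Your heuristic --- that the remaining basis classes of $H_2(SP^n(M_g))$ die under $(P_n)_*$ because $p$ kills $H_1(M_g)$ --- is correct, but it requires knowing that $H_2(SP^n(M_g))$ for $n\ge 2$ decomposes as $H_2(M_g)\oplus\Lambda^2 H_1(M_g)$ with the second summand coming functorially from $M_g^2\to SP^n(M_g)$. This is available from Dold's functorial description of $H_*(SP^n(-))$ (or by pairing against Macdonald's $H^2$ basis), but it is not automatic from the Hom-dual definition of $c^*$ and should be cited or proved. Once that is supplied, the local-degree computation showing $\deg(P_n)=1$ (using that $p$ is a homeomorphism away from the $1$-skeleton, so a generic point of $\C P^n$ with distinct non-basepoint coordinates has a unique $P_n$-preimage) correctly gives that $P_n^*$ is an isomorphism on $H^{2n}$, so $(c^*)^n=P_n^*(h^n)\ne 0$. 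In contrast, \cite{DDJ} stays entirely inside the ring $\Lambda(a_i^*,b_i^*)\otimes\Z[c^*]$, which avoids the geometric input but at the cost of a more intricate ideal computation; your route trades that algebra for the degree-$1$ collapse and a small amount of $SP^n$-functoriality.
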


The integral cohomology ring $H^*(SP^n(M_g))$ is torsion-free,~\cite{Mac},~\cite{KS}. So, $a_i^*$, $b_j^*$, $c^*\in H^*(SP^n(M_g))$ can be considered as rational cohomology classes and hence, Theorem~\ref{mc} and Proposition~\ref{cup1} hold true in rational cohomology as well.

The LS-category of $SP^n(M_g)$ can now be computed easily.

\begin{thm}[\protect{\cite[Theorem 6.5]{DDJ}}]\label{oldknown}
    For each $n\ge 1$ and $g\ge 0$, we have that
    \[
    \cat\left(SP^n(M_g)\right) = 
    \begin{cases}
    2n & \text{if } n\le g\\
    n+g & \text{if } n>g.
    \end{cases}
    \]
\end{thm}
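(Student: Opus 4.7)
The plan is to show matching upper and lower bounds; the lower bound is a short cohomological calculation, while the upper bound splits into an easy and a genuinely harder case.

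\textbf{Lower bound.} Theorem~\ref{clbound} gives $\cat(X) \ge c\ell_R(X)$ for any ring $R$. Since $H^*(SP^n(M_g))$ is torsion-free by~\cite{Mac}, the nonzero integral products of Proposition~\ref{cup1} remain nonzero after passing to $\Q$. Hence $c\ell_{\Q}(SP^n(M_g)) \ge 2n$ when $n \le g$ (using $a_1^* b_1^* \cdots a_n^* b_n^* \ne 0$) and $\ge n+g$ when $n > g$ (using $a_1^* b_1^* \cdots a_g^* b_g^* (c^*)^{n-g} \ne 0$), producing the desired lower bound on $\cat$.

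\textbf{Upper bound.} For $n \le g$, the universal inequality $\cat(X) \le \dim(X)$ from Theorem~\ref{clbound} yields $\cat(SP^n(M_g)) \le 2n$, matching the lower bound. For $n > g$ we need $\cat \le n+g$, which is strictly smaller than $\dim = 2n$, so the dimension bound is insufficient. The base case $g = 0$ is classical since $SP^n(S^2) \cong \C P^n$ and $\cat(\C P^n) = n$. For $g \ge 1$ I would exploit the Abel--Jacobi map $\mu_n : SP^n(M_g) \to J \cong \T^{2g}$: for $n \ge 2g - 1$ this is a smooth $\C P^{n-g}$-bundle over the Jacobian, and combining a categorical cover of $\T^{2g}$ by $2g + 1$ open sets with a fiberwise cover coming from the standard category of $\C P^{n-g}$ should yield a cover of $SP^n(M_g)$ by $n + g + 1$ open sets, each contractible in the total space.

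\textbf{Main obstacle.} The crux is upgrading the product-type estimate $\cat(B) + \cat(F)$ to an actual cover of the total space: the inequality $\cat(E) \le \cat(B) + \cat(F)$ fails for arbitrary fibrations, so one must work with the specific projective-bundle structure --- for instance using the existence of a section of $\mu_n$ for $n$ large, together with the explicit presentation of $H^*(SP^n(M_g))$ in Theorem~\ref{mc} to rule out the usual obstructions. A secondary difficulty is the intermediate range $g < n < 2g - 1$ (nonempty only for $g \ge 3$), where $\mu_n$ ceases to be a fiber bundle because the fibers jump in dimension on Brill--Noether loci. In that range one may either induct on $n$ via the basepoint inclusion $SP^{n-1}(M_g) \hookrightarrow SP^n(M_g)$ or work directly with the CW structure on $\ov{SP}^n(M_g)$ supplied by~\cite{KS}.
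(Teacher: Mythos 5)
Your lower bound and the $n \le g$ upper bound coincide exactly with the paper's argument: cup-length from Proposition~\ref{cup1} on one side and the dimensional bound $\cat \le \dim = 2n$ on the other. The divergence --- and the gap --- is entirely in the upper bound for $n > g$, which is the genuinely hard half of the theorem.

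There you propose to exploit the Abel--Jacobi map $\mu_n : SP^n(M_g) \to \T^{2g}$ and add up covers of base and fiber, but you yourself flag the two obstructions and do not resolve either. The additivity $\cat(E) \le \cat(B) + \cat(F)$ is false for general fibrations (the correct universal bound is the multiplicative one, $\cat(E)+1 \le (\cat(B)+1)(\cat(F)+1)$, which here gives $(2g+1)(n-g+1)-1$, far too large), and nothing you say about sections or the presentation of $H^*(SP^n(M_g))$ actually supplies the additive estimate. Moreover, in the range $g < n < 2g-1$ the map $\mu_n$ is not even a fiber bundle, so the whole framework is unavailable precisely where it would be needed; the suggested induction via $SP^{n-1}(M_g) \hookrightarrow SP^n(M_g)$ is not developed. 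The paper avoids all of this with a single clean input you are missing: Dranishnikov's theorem (\cite{Dr3}) that for a connected CW complex $X$,
\[
\cat(X) \le \frac{\dim(X) + \cd(\pi_1(X))}{2}.
\]
Since $\pi_1(SP^n(M_g)) = \Z^{2g}$ (for $n\ge 2$) has cohomological dimension $2g$ and $\dim SP^n(M_g) = 2n$, this immediately gives $\cat(SP^n(M_g)) \le n+g$, uniformly for all $n > g$, with no case analysis and no fibration. So: your lower bound and easy upper bound are correct, but the $n>g$ upper bound in your proposal is an unproved sketch resting on an inequality that fails in general, whereas the paper's proof hinges on a known cohomological-dimension bound for $\cat$ that your outline does not invoke.
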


\begin{proof}
    For $n\le g$, we have due to Proposition~\ref{cup1} (1) and Theorem~\ref{clbound} that 
    \[
    2n\le \cu_{\Q}(SP^n(M_g))\le \cat(SP^n(M_g))\le \dim(SP^n(M_g))=2n.
    \]
    Similarly, for $n>g$, Proposition~\ref{cup1} (2) and Theorem~\ref{clbound} imply the inequality $n+g\le \cu_{\Q}(SP^n(M_g))\le \cat(SP^n(M_g))$. In this case, the main result of~\cite{Dr3} gives
    \[
\cat(SP^n(M_g))\le\frac{\dim(SP^n(M_g))+\cd(\pi_1(SP^n(M_g)))}{2}=\frac{2n+2g}{2}=n+g.
    \]
    This completes the proof.
\end{proof}

\subsection{Sequential topological complexity}\label{tcm}

For $m\ge 2$, let $\Delta_m:X \to X^m$ denote the diagonal map. For each $1 \le i \le m$, let $\pi_i:X^m\to X$ be the projection onto the $i$-th factor. For any $y\in H^*(X;\Q)$, let us define $\ov{y}\in H^*(X^m;\Q)$ as
\[
\ov{y}=\sum_{i=1}^{m-1}\pi_i^*(y)-(m-1)\pi_m^*(y).
\]
Here, $\pi_i^*(y)=1\otimes\cdots\otimes 1\otimes y\otimes 1 \otimes \cdots\otimes 1\in H^*(X^m;\Q)$ for each $1\le i\le m$, where $y$ appears in the $i$-th position. We note that $\Delta_m^*(\ov{y})=(m-1)y-(m-1)y=0$, see~\cite[Section 4]{Ru}. Hence, $\ov{y}$ is an $m$-th rational special zero-divisor of $X$, where ``special'' is in the sense of Definition~\ref{szcldefn}.

We will follow these notations in the proof of our main result.

\begin{proof}[Proof of Theorem~\ref{aa}]
    Let us fix $m\ge 2$. For each $n\ge 2$ and $g\ge 0$, we have the upper bound $\TC_m(SP^n(M_g))\le m\cat(SP^n(M_g))$ due to Theorem~\ref{tcbound}, where the value $\cat(SP^n(M_g))$ is known by Theorem~\ref{oldknown}. Let us now find the lower bounds separately in the cases $n\le g$ and $n>g$.

    We note that the classes $\ov{a_i^*},\ov{b_j^*},\ov{c^*}\in H^*((SP^n(M_g))^m;\Q)$ are non-zero. We also recall that $\ov{a_i^*},\ov{b_j^*},\ov{c^*}\in\text{Ker}(\Delta_m^*)$ for each $i,j$. For each $k\ge 2$, $(a_i^*)^k=(b_j^*)^k=0$. Hence, it follows from the relationship between the cup product and the tensor product~\cite[Section 3.2]{Ha} that for each $1\le i,j\le g$,
    \[
    (\ov{a_i^*})^m=\pm (m-1)m! \underbrace{\left(a_i^*\otimes\cdots\otimes a_i^*\right)}_{\text{$m$-times}}  \hspace{3.5mm} \text{and} \hspace{3.5mm} (\ov{b_j^*})^m=\pm (m-1)m! \underbrace{\left(b_j^*\otimes\cdots\otimes b_j^*\right)}_{\text{$m$-times}}.
    \]
Let $n\le g$. Then $a_1^*b_1^*\cdots a_n^*b_n^*\ne 0$ by Proposition~\ref{cup1} (1). Hence, the following cup product is non-zero:
    \[
    (\ov{a_1^*})^m (\ov{b_1^*})^m\cdots (\ov{a_n^*})^m  (\ov{b_n^*})^m= \pm \left((m-1)m!\right)^{2n} \underbrace{\left(\left(a_1^*b_1^*\cdots a_n^*b_n^*\right)\otimes\cdots\otimes \left(a_1^*b_1^*\cdots a_n^*b_n^*\right)\right)}_{\text{$m$-times}}. 
    \]
    Therefore, we get $2mn\le \zcl_{\Q}^m(SP^n(M_g))\le \TC_m(SP^n(M_g))$ from Theorem~\ref{tcbound}. Thus, we have that $\TC_m(SP^n(M_g))=2mn$ in the case $n\le g$.

    Next, we let $n> g$. It is not difficult to see that $(\ov{c^*})^{m(n-g)}$ contains the term
    \[
    (-1)^{n-g}(m-1)^{n-g}\prod_{i=0}^{m-1} {(m-i)(n-g) \choose n-g} \underbrace{\left((c^*)^{n-g}\otimes\cdots\otimes(c^*)^{n-g}\right)}_{\text{$m$-times}}.
    \]
    By Proposition~\ref{cup1} (2), $a_1^*b_1^*\cdots a_g^*b_g^*(c^*)^{n-g}\ne 0$. Hence, the above term and the following term are both non-zero.
    \[
    (\ov{a_1^*})^m (\ov{b_1^*})^m\cdots (\ov{a_g^*})^m  (\ov{b_g^*})^m= \pm\left((m-1)m!\right)^{2g}\underbrace{\left(\left(a_1^*b_1^*\cdots a_g^*b_g^*\right)\otimes\cdots\otimes \left(a_1^*b_1^*\cdots a_g^*b_g^*\right)\right)}_{\text{$m$-times}}. 
    \] 
    Also, for each $k \ge 1$, it follows from Theorem~\ref{mc} that $a_1^*b_1^*\cdots a_g^*b_g^*(c^*)^{n-g+k}=0$. Thus,
    we finally have that the cup product $(\ov{a_1^*})^m (\ov{b_1^*})^m\cdots (\ov{a_g^*})^m  (\ov{b_g^*})^m (\ov{c^*})^{m(n-g)}$ is equal to the non-zero term
    \[
     \underbrace{\left(\left(a_1^*b_1^*\cdots a_g^*b_g^*(c^*)^{n-g}\right)\otimes\cdots\otimes\left(a_1^*b_1^*\cdots a_g^*b_g^*(c^*)^{n-g}\right)\right)}_{\text{$m$-times}}
    \]
    multiplied by the non-zero constant
    \[
    \pm(-1)^{n-g}(m-1)^{n+g}(m!)^{2g}\prod_{i=0}^{m-1} {(m-i)(n-g) \choose n-g}.
    \]
    Therefore, $2mg+m(n-g)=m(n+g)\le \zcl_{\Q}^m(SP^n(M_g))\le \TC_m(SP^n(M_g))$ due to Theorem~\ref{tcbound}. Hence, $\TC_m(SP^n(M_g))=mn+mg$ in the case $n> g$.
\end{proof}

For completeness, we recall that for any integer $m\ge 2$, $\TC_m(M_0)=m$, $\TC_m(M_1)=2m-2$, and $\TC_m(M_g)=2m$ for each $g\ge 2$. 

We now verify the rationality conjecture (Conjecture~\ref{foconjec}) for $SP^n(M_g)$ for each $n\ge 2$ and $g\ge 0$. We note that this conjecture has been verified for only a few classes of spaces, namely the spheres, simply connected symplectic manifolds, compact Lie groups, surfaces\hspace{0.2mm}\footnote{\hspace{0.5mm}The verification of the rationality conjecture for non-orientable surfaces seems to be missing from~\cite{FO} and the literature thereafter. Indeed, Conjecture~\ref{foconjec} holds for $N_g$ in view of the computations of~\cite{GGGL},~\cite{Dr2}, and~\cite{CV}. The rational function represented by the power series of the $\TC$-generating function of $N_g$ in the case $g=1$ is different from that in the case $g\ge 2$.}, and the classifying spaces of Higman's group, Right-angled Artin groups, and torsion-free hyperbolic groups, see~\cite{FO} and~\cite{HL}. 

\begin{proof}[Proof of Corollary~\ref{aa1}]
Using Theorem~\ref{aa}, we see that the $\TC$-generating function of $X=SP^n(M_g)$ is the power series
   \[
   f_{SP^n(M_g)}(t)=\sum_{m=1}^{\infty} (m+1)\cat(SP^n(M_g))\hspace{0.5mm}t^m 
   = \cat(SP^n(M_g))\hspace{0.5mm} \frac{d}{dt}\left(\sum_{m=1}^{\infty} t^{m+1}\right),
   \]
   which represents the rational function
   \[
   h_{SP^n(M_g)}(t)=\frac{(2t-t^2)\cat(SP^n(M_g))}{(1-t)^2}.
   \]
   Clearly, the numerator of $h_{SP^n(M_g)}$ is an integral quadratic polynomial whose value at $t=1$ is $\cat(SP^n(M_g))$.
\end{proof}

We now show that the sequential distributional complexities of $SP^n(M_g)$ and the distributional category of its products agree with their classical counterparts. 

\begin{proof}[Proof of Corollary~\ref{dd2}]
    For each $m\ge 2$, the proof of Theorem~\ref{aa} gives the equality $\TC_m(SP^n(M_g))=\szcl_{\Q}^m(SP^n(M_g))$. Hence, Corollary~\ref{dd1} and Theorem~\ref{dtcbound} imply that $\dTC_m(SP^n(M_g))=\TC_m(SP^n(M_g))$. Using this and Corollary~\ref{aa2}, we get from Theorems~\ref{dcatbound} and~\ref{dtcbound} that $\dcat((SP^n(M_g))^m)=\cat((SP^n(M_g))^m)$.
\end{proof}

\begin{remark}\label{obvobv}
    It is well-known that $SP^n(M_0)=\C P^n$, see, for example,~\cite{KS}. We observe that Theorem~\ref{aa} and Corollary~\ref{aa1} recover the corresponding results for $\C P^n$. From Corollary~\ref{dd2}, we also deduce using~\cite[Corollary 3.15]{BGRT} that $\dTC_m(\C P^n)=mn$, a computation not found in~\cite{J1}.
\end{remark}

\subsection{More consequences}\label{observe}

Let us highlight some ways in which the LS-category and the sequential topological complexity of $SP^n(M_g)$ are ``nicely behaved''.

\begin{enumerate}[(1)]

\item \textit{Logarithmic law conjecture:} For spaces $Y_i$,~\cite[Proposition 2.3]{Ja} implies that
    \begin{equation}\label{summation}
    \cat\left(Y_1\times Y_2\times\cdots\times Y_m\right)\le \sum_{i=1}^m \cat\left(Y_i\right)
    \end{equation}
 for any $m\ge 1$. This leads to the ``logarithmic law conjecture'', which says that for any closed manifold $Y$, $\cat(Y^m)=m\cat(Y)$. This conjecture is not true in general, see~\cite{Dr1} and the references therein. However, for $Y=SP^n(M_g)$ and $Y=SP^n(N_g)$, this conjecture is verified for each valid $m,n$, and $g$ in view of Corollaries~\ref{aa2} and~\ref{bb2}, respectively. 

\item \emph{The Ganea conjecture:} For any $k\ge 1$ and space $Y$, $\cat(Y \times S^k)\le\cat(Y)+1$. Ganea's conjecture~\cite{CLOT} says that the above inequality should be equality, in particular, if $Y$ is a finite CW complex. This conjecture was disproved by Iwase in~\cite{Iwmain} (see also~\cite{Iw}). Interestingly, we can verify this conjecture for $Y=SP^n(M_g)$ for each $n$, $g$, and $k$. Using techniques from the proof of Theorem~\ref{oldknown} and~\eqref{summation}, it can be shown that
\[
\cat(SP^n(M_g)\times S^k)=\begin{cases}
    2n+1 & \text{ if } n\le g\\
    n+g+1 & \text{ if } n>g.
\end{cases}
\]

\item \emph{The $\TC_m$-Ganea conjecture:} For sequential topological complexities, one can pose an analogue of Ganea's original conjecture on LS-category by asserting that $\TC_m(Y\times S^k)=\TC_m(Y)+\TC_m(S^k)$ for any finite CW complex $Y$ and integers $m\ge 2$ and $k\ge 1$. For $m=2$, this conjecture was disproved in~\cite{GGV} in the case when $k$ is even. However, this conjecture is open in the case of odd $k$, even for $m=2$ (see, for example,~\cite{GV}). Here, we can verify this conjecture for $Y=SP^n(M_g)$ for each $n$, $g$, $m$, and $k$. Indeed,~\cite[Theorem~3.10 \& Proposition 3.11]{BGRT} give 
$\zcl_{\Q}^m(SP^n(M_g))+\zcl_{\Q}^m(S^k)\le\zcl_{\Q}^m(SP^n(M_g)\times S^k)$ and $\TC_m(SP^n(M_g)\times S^k)\le\TC_m(SP^n(M_g))+\TC_m(S^k)$, respectively, and we know from~\cite{Ru} and the proof of Theorem~\ref{aa}, respectively, that $\zcl_{\Q}^m(S^k)=\TC_m(S^k)$ and $\zcl_{\Q}^m(SP^n(M_g))=\TC_m(SP^n(M_g))$.

   \item \textit{Growth of the sequence $\{\TC_m(X)\}_{m\ge 2}$:} For a space $X$ and $m\ge 2$, we have from~\cite[Proposition 3.7]{BGRT} that $\TC_{m+1}(X)-\TC_m(X)\le 2\cat(X)$. Here, for each $m\ge k$, 
   \[
   \TC_m(SP^n(M_g))-\TC_k(SP^n(M_g))=(m-k)\cat(SP^n(M_g))
   \]
   due to Theorem~\ref{aa}. In particular, the sequence $\{\TC_{m}(SP^n(M_g))\}_{m\ge 2}$ is an arithmetic progression with common difference $\cat(SP^n(M_g))$. 
\end{enumerate}

Let us now make more observations using Theorems~\ref{aa} and~\ref{oldknown}.

\begin{enumerate}[(i)]
\item\label{no relation} Let $G$ be a compact Lie group acting on a finite CW complex $X$. Let $\cat_G(X)$ and $\TC_G(X)$ denote the $G$-equivariant LS-category~\cite{Fad},~\cite{Mar} and the $G$-equivariant topological complexity~\cite{CG} of $X$, respectively. While we have\footnote{\hspace{0.2mm}Strictly speaking, we need to assume that the $G$-space $X$ is $G$-connected (i.e., the fixed-point set $X^H$ is path-connected for every closed subgroup $H\subset G$) and $X^G\ne \emptyset$ for the inequality $\cat(X)\le\cat_G(X)$. The author thanks Navnath Daundkar for pointing this out.}
\[
\max\{\cat(X/G),\cat(X)\}\le\cat_G(X) \hspace{3.5mm} \text{and} \hspace{3mm} \max\{\TC(X/G),\TC(X)\}\le\TC_G(X),
\]
there is no relation between $\cat(X/G)$ and $\cat(X)$, and between $\TC_m(X/G)$ and $\TC_m(X)$ for any $m\ge 2$. To see this, first, take $G=\Z_2$ and $X=S^n$ for $n\ge 2$. Note that $X/G=\R P^n$. Clearly, $\cat(X)= 1 < n=\cat(X/G)$ and $\TC_m(X)\le m < \TC_m(X/G)$. Next, let us take $G=S_n$ and $X=(M_g)^n$ for $n>g\ge 2$. Note that $X/G=SP^n(M_g)$. By Theorems~\ref{oldknown} and~\ref{aa}, we get $\cat(X/G)= n+g < 2n=\cat(X)$ and $\TC_m(X/G)= mn+mg <\TC_m(X)$. 

\item It follows from~\cite[Proposition 3.11]{BGRT} that $\TC_m(X^n)\le n\TC_m(X)$ for any $m\ge 2$ and $n\ge 1$. However, it turns out that in general, there is no relation between $\TC_m(SP^n(X))$ and $n\TC_m(X)$. For each $n>g\ge 2$, the inequality $\TC_m(SP^n(M_g))<n\TC_m(M_g)$ follows from the above item. Using Theorem~\ref{aa}, we also see that $\TC(SP^n(\T))>n\TC(\T)\ge \TC(\T^n)$ for each $n\ge 2$ and that $\TC_3(SP^2(\T))>2\TC_3(\T)\ge \TC_3(\T^2)$ for the $2$-torus $\T$.

\item It is well-known that $SP^2(M_1)$ is a $S^2$-bundle over a $2$-torus, say $\T$. Let $p:SP^2(M_1)\to \T$ be the corresponding locally trivial fibration with fiber $S^2$. Then for the $m$-th sequential parametrized topological complexity of the fibration $p$ (see~\cite{FP}, and also~\cite{CFW} for the case $m=2$), we have because of~\cite[Definition 3.1 and Proposition 6.1]{FP} that 
\[
m=\TC_m\left(S^2\right)\le\TC_m\left[\hspace{0.4mm}p:SP^2\left(M_1\right)\to \T\right]\le\frac{2m+2+1}{2} = m+\frac{3}{2}.
\]
However, $\TC_m(SP^2(M_1))=3m$ by Theorem~\ref{aa}. Hence, for each $m\ge 2$, $\TC_m$ of the total space of $p$ is almost thrice the value of $\TC_m[\hspace{0.4mm}p:SP^2(M_1)\to\T]$.
\end{enumerate}

\subsection{Products of symmetric products of orientable surfaces}\label{products section}
We now study $\cat$ and $\TC_m$ of finite products of the symmetric products of closed orientable surfaces. We begin by recalling the following definition from ~\cite[Definition 1.3]{AGO}.

\begin{defn}[\protect{\cite{AGO}}]\label{tclogdefn}
A family of spaces $\{X_i\}_{i\in I}$ is said to be \emph{LS-logarithmic} if $\cat(\prod_{i\in J}X_{i_j})=\sum_{j\in J}\cat(X_{i_j})$ holds for each finite subset $J\subset I$. Similarly, a family of spaces $\{Y_i\}_{i\in I}$ is called \emph{$\TC_m$-logarithmic} for an integer $m\ge 2$ if $\TC_m(\prod_{i\in J}Y_{i_j})=\sum_{j\in J}\TC_m(Y_{i_j})$ holds for each finite $J\subset I$.
\end{defn}

In particular, an LS-logarithmic family with $X_i=X_k$ for all $i,k\in X$ satisfies the logarithmic law conjecture (\emph{cf.} Section~\ref{observe}).

\begin{proof}[Proof of Corollary~\ref{aa2}]
First, note that the proof of Theorem~\ref{oldknown} gives the equality $\cu_{\Q}(SP^n(M_g))=\cat(SP^n(M_g))$ for each $n\ge 1$ and $g\ge 0$. The K\"unneth formula gives an isomorphism of $\Q$-algebras (and hence, of $\Q$-vector spaces)
\[
H^*(SP^{n_1}(M_{g_1})\times SP^{n_2}(M_{g_2});\Q)=H^*(SP^{n_1}(M_{g_1});\Q)\otimes_{\Q} H^*(SP^{n_2}(M_{g_2});\Q),
\]
which implies $\cu_{\Q}(SP^{n_1}(M_{g_1}))+\cu_{\Q}(SP^{n_2}(M_{g_2}))\le \cu_{\Q}(SP^{n_1}(M_{g_1})\times SP^{n_2}(M_{g_2}))$. Using an inductive argument, the LS-logarithmicity of $\{SP^n(M_g)\}_{n,g}$ follows in view of~\eqref{summation}. Such an argument also yields $\cat((SP^n(M_g))^m)=m\cat(SP^n(M_g))$. We note that the latter also follows directly from Theorems~\ref{aa} and~\ref{tcbound}. 
\end{proof}

We now look at the $\TC_m$-logarithmicity of $\{SP^n(M_g)\}_{n\ge 1,g\ge 0}$. The following lemma will be crucial. 

\begin{lemma}\label{important lemma}
For any integers $m, n_1,n_2\ge 2$, and $g_1,g_2\ge 0$, we have that 
\[
\szcl^m_{\Q}(SP^{n_1}(M_{g_1}))+\szcl^m_{\Q}(SP^{n_2}(M_{g_2}))\le\szcl^m_{\Q}(SP^{n_1}(M_{g_1})\times SP^{n_2}(M_{g_2})).
\]
\end{lemma}

\begin{proof}
For brevity, let $Y=SP^{n_1}(M_{g_1})$ and $Z=SP^{n_2}(M_{g_2})$, and let us write $H^*(A):=H^*(A;\Q)$ for any space $A$. For each $1\le i \le m$, define the projections $\pi^Y_i:Y^m\to Y$ and $\pi^Z_i:Z^m\to Z$ onto the respective $i$-th factors. Suppose $r_i:(Y\times Z)^m \to Y\times Z$ is the projection onto the $i$-th factor. Up to homeomorphism, $r_i=\pi_i^Y\times\pi_i^Z$. For non-zero elements $y\in H^*(Y)$ and $z\in H^*(Z)$, we have the special zero-divisors $\ov{y}\in H^*(Y^m)$ and $\ov{z}\in H^*(Z^m)$, defined, respectively, as
\[
\ov{y}=\sum_{i=1}^{m-1}(\pi_i^Y)^*(y)-(m-1)(\pi^Y_m)^*(y) \ \text{ and } \ \ov{z}=\sum_{i=1}^{m-1}(\pi_i^Z)^*(z)-(m-1)(\pi^Z_m)^*(z).
\]
Let $\pr_Y:Y\times Z\to Y$ and $\pr_Z:Y\times Z\to Z$ be the projections. We then have the product maps $\pr^m_Y:(Y\times Z)^m\to Y^m$ and $\pr^m_Z:(Y\times Z)^m\to Z^m$.
Consider $\wh{y}=\pr_Y^*(y)=y\otimes 1$ and $\wh{z}=\pr_Z^*(z)=1\otimes z$ in $H^*(Y\times Z)=H^*(Y)\otimes H^*(Z)$, and the cohomology classes $\wt{y}=(\pr^m_Y)^*(\ov{y})=\ov{y}\otimes 1$ and $\wt{z}=(\pr^m_Z)^*(\ov{z})=1\otimes\ov{z}$ in $H^*((Y\times Z)^m)=H^*(Y^m)\otimes H^*(Z^m)$, where the ``$=$'' symbol denotes the K\"unneth isomorphisms between the respective $\Q$-algebras. These isomorphisms imply that 
\[
\wh{y}\hspace{0.4mm}\wh{z}=\pm \hspace{0.4mm}y\otimes z\ne 0\ \text{ and } \ \wt{y}\hspace{0.4mm}\wt{z}=\pm\hspace{0.4mm}\ov{y}\otimes\ov{z}\ne 0
\]
since $\ov{y},\ov{z}\ne 0$.
Let $\Delta_m^Y:Y\to Y^m$ and $\Delta_m^Z:Z\to Z^m$ be the diagonal maps. Then, up to homeomorphism, $\Delta_m^Y\times\Delta_m^Z:Y\times Z\to (Y\times Z)^m$ is the diagonal map. Since $(\Delta_m^Y)^*(\ov{y})=(m-1)y-(m-1)y=0$, we have the following diagram for $Y$, which commutes up to isomorphism because of the properties of tensor and cup products.
\[
\begin{tikzcd}[column sep = huge, contains/.style = {draw=none,"\in" description,sloped}]
\bigoplus_{i=1}^{m-1} \wh{y} \oplus (m-1)\wh{y}\ar[r,mapsto] \ar[d,contains] & \wt{y} \ar[d,contains] \ar[r,mapsto] & (m-1)\wh{y}-(m-1)\wh{y} \ar[d,contains]
\\
\bigoplus_{i=1}^{m} H^*(Y\times Z) \arrow{r}{\bigoplus_{i=1}^{m-1} r_i^* - r_m^*} 
&
H^*((Y\times Z)^m)  \arrow{r}{(\Delta_m^Y\times\Delta_m^Z)^*}  
&
H^{*}(Y\times Z)
\\
\bigoplus_{i=1}^m H^*(Y)  \arrow[swap]{r}{\bigoplus_{i=1}^{m-1} (\pi_i^Y)^* - (\pi_m^Y)^*} \arrow{u}{\bigoplus_{i=1}^{m} \pr_Y^* \hspace{1mm}}
&
H^{*}(Y^m)  \arrow[swap]{r}{(\Delta_m^Y)^*}  \arrow{u}{(\pr^m_Y)^*}
&
H^{*}(Y). \arrow[swap]{u}{\pr_Y^*}
\\
\bigoplus_{i=1}^{m-1} y \oplus (m-1)y \ar[r,mapsto] \ar[u,contains] & \ov{y} \ar[u,contains] \ar[r,mapsto] & (m-1)y-(m-1)y \ar[u,contains]
\end{tikzcd}
\]
From this, we get that $\wt{y}$ is an $m$-th rational special zero-divisor of $Y\times Z$, in the sense of Definition~\ref{szcldefn}. Using a similar argument, we deduce that $\wt{z}$ is also such a special zero-divisor of $Y\times Z$. Now, recall from the proof of Theorem~\ref{aa} that $\szcl^m_{\Q}(Y)$ and $\szcl^m_{\Q}(Z)$ are determined precisely by the special zero-divisors of the form $\ov{y}$ and $\ov{z}$, respectively. They produce special zero-divisors of $Y\times Z$, namely $\wt{y}$ and $\wt{z}$, whose cup product is non-zero. Therefore, it follows from the proof of Theorem~\ref{aa} that $\szcl^m_{\Q}(Y\times Z)\ge \szcl^m_{\Q}(Y)+\szcl^m_{\Q}(Z)$. 
\end{proof}

\begin{proof}[Proof of Corollary~\ref{aa3}]
    By the proof of Theorem~\ref{aa}, we know that the equality $\szcl^m_{\Q}(SP^n(M_g))=\TC_m(SP^n(M_g))$ holds for each $m,n\ge 2$ and $g\ge 0$. From this, Lemma~\ref{important lemma}, and~\cite[Proposition 3.11]{BGRT}, we conclude that
    \[
    \TC_m(SP^{n_1}(M_{g_1})\times SP^{n_2}(M_{g_2})) = \TC_m(SP^{n_1}(M_{g_1}))+\TC_m(SP^{n_2}(M_{g_2})).
    \]
    If $n_1=n_2$ and $g_1=g_2$, we obtain $\TC_m((SP^n(M_g))^2)=2\TC_m(SP^n(M_g))$. Hence, the $\TC_m$-logarithmicity of the family, as well as the second assertion, both follow from an inductive argument on Lemma~\ref{important lemma}.
\end{proof}

Note that we have $\TC_m(\prod_{i=1}^kSP^{n_i}(M_{g_i}))=m\cat(\prod_{i=1}^kSP^{n_i}(M_{g_i}))$ for all $k\ge 1$, $m,n_i\ge 2$, and $g_i\ge 0$ due to Theorem~\ref{aa} and Corollaries~\ref{aa2} and~\ref{aa3}. Hence, the rationality conjecture (Conjecture~\ref{foconjec}) is verified also for finite products of the symmetric products of closed orientable surfaces (\emph{cf.} proof of Corollary~\ref{aa1}).

\section{Symmetric products of non-orientable surfaces}\label{symnonorient}

Let $N_g$ denote the non-orientable surfaces of genus $g\ge 1$. In this section, we shall prove Theorem~\ref{bb}, which computes the Lusternik--Schnirelmann category of the symmetric $n$-th product $SP^n(N_g)$ for each $n,g\ge 1$. We will also show that the $i$-th higher homotopy group of $SP^n(N_g)$ for $n>i\ge 2$ is isomorphic to the $i$-th reduced integral homology group of $N_g$, which vanishes (\emph{c.f.} Theorem~\ref{ee}).

\subsection{Homotopy groups}

It follows from the Dold--Thom Theorem~\cite{DT} that $SP^{\infty}(M_g)\simeq (S^1)^{2g}\times\C P^{\infty}$ and $SP^{\infty}(N_g)\simeq (S^1)^{g-1}\times\R P^{\infty}$ for each $g$. In fact, these homotopy equivalences are recovered from the following general result on homotopy splitting (see~\cite[Section 6]{KS} for a proof).

\begin{prop}\label{2dim}
    If $X$ is a $2$-dimensional CW complex     such that 
    \[
    H_1(X;\Z)= \Z_{k_1}\oplus\cdots\oplus\Z_{k_r}\oplus\Z^a \hspace{3mm} \text{and} \hspace{3mm} H_2(X;\Z)=\Z^b
    \]
    for some non-negative integers $a,b,r$, and $k_i$, then 
    \[
    SP^{\infty}(X)\simeq (S^1)^a\times \left(\C P^{\infty}\right)^b\times L_{k_1}\times\cdots\times L_{k_r},
    \]
    where $L_{k_i}=S^{\infty}/\Z_{k_i}$ is the classifying space of the group $\Z_{k_i}$ for each $i$.
\end{prop}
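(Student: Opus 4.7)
The plan is to invoke the Dold--Thom theorem to identify the homotopy groups of $SP^{\infty}(X)$ and then use the classical splitting of $SP^{\infty}$ of a connected CW complex as a product of Eilenberg--MacLane spaces.

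First, I would apply the Dold--Thom theorem~\cite{DT}, which gives $\pi_k(SP^{\infty}(X)) \cong \widetilde{H}_k(X;\Z)$ for every $k \ge 1$ and every based connected CW complex $X$. Since $X$ is $2$-dimensional by hypothesis, the only non-vanishing homotopy groups of $SP^{\infty}(X)$ are
\[
\pi_1(SP^{\infty}(X)) \cong \Z_{k_1} \oplus \cdots \oplus \Z_{k_r} \oplus \Z^a \quad \text{and} \quad \pi_2(SP^{\infty}(X)) \cong \Z^b.
\]

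Second, I would invoke the classical strengthening of Dold--Thom: for every based connected CW complex $X$,
\[
SP^{\infty}(X) \simeq \prod_{k \ge 1} K(\widetilde{H}_k(X;\Z), k).
\]
This follows from the fact that $SP^{\infty}(X)$ is a connected abelian topological monoid (obtained as the colimit of the $SP^n(X)$ under basepoint inclusions, with concatenation of unordered tuples as multiplication) whose Postnikov tower splits. For a $2$-dimensional $X$, this reduces to $SP^{\infty}(X) \simeq K(\pi_1, 1) \times K(\pi_2, 2)$.

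Finally, using the identifications $K(\Z, 1) = S^1$, $K(\Z, 2) = \C P^{\infty}$, and $K(\Z_{k}, 1) = L_{k}$, together with the product formula $K(A \oplus B, n) \simeq K(A, n) \times K(B, n)$, the direct sum decomposition of $\pi_1(SP^{\infty}(X))$ yields the claimed homotopy equivalence. The main technical point is the splitting of the Postnikov tower of $SP^{\infty}(X)$; in the $2$-dimensional case this amounts to showing that the $k$-invariant in $H^3(K(\pi_1, 1); \pi_2)$ vanishes, and this follows from the abelian $H$-space structure carried by $SP^{\infty}(X)$ together with the fact that $SP^{\infty}$ carries wedges to products. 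This is the step one has to be most careful about, since the reduction to Eilenberg--MacLane factors is what makes the whole proposition concrete.
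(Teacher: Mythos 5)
Your proof is correct and follows the standard argument: the Dold--Thom theorem supplies the homotopy groups $\pi_k(SP^{\infty}(X)) \cong \widetilde{H}_k(X;\Z)$, and the abelian topological monoid structure on $SP^{\infty}(X)$ forces it to split as a product of Eilenberg--MacLane spaces (Moore's theorem for connected commutative topological monoids). The paper itself does not prove the proposition but cites \cite{KS}, Section 6, where essentially this argument appears; so your route matches the intended one.

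The one imprecise point is your closing sentence. The vanishing of the $k$-invariant in $H^3(K(\pi_1,1);\pi_2)$ does follow from primitivity (forced by the $H$-space structure) \emph{combined with the separate computation} that $H^3\bigl((S^1)^a \times L_{k_1}\times\cdots\times L_{k_r};\Z^b\bigr)$ contains no primitive classes; but the fact that $SP^{\infty}$ carries wedges to products is not what makes the $k$-invariant vanish. That property is instead relevant to an alternative completion of the argument: build a wedge $W$ of Moore spaces $S^1$, $M(\Z_{k_i},1)$, and $S^2$, map it into $SP^{\infty}(X)$ so as to realize each homotopy generator --- this is possible because $\pi_1(SP^{\infty}(X))=H_1(X)$ is abelian, so the map $S^1 \to SP^{\infty}(X)$ representing a torsion class extends over the $2$-cell of $M(\Z_{k_i},1)$, whereas in general one cannot map a wedge of Moore spaces directly into $X$ --- then use the monoid structure to extend to $SP^{\infty}(W)\to SP^{\infty}(X)$, identify $SP^{\infty}(W)$ as a product of Eilenberg--MacLane spaces via wedges-to-products, and check the induced map is a weak equivalence by Dold--Thom. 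Either line of reasoning works, but as written your last sentence conflates the two.
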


We use the above proposition and our discussion in Section~\ref{symprod} to obtain the following result on the vanishing of certain higher homotopy groups of $SP^n(X)$.

\begin{prop}\label{hh}
    Let $X$ be a $2$-dimensional CW complex with only one vertex, satisfying the hypothesis of Proposition~\ref{2dim}. Then, for all $n\ge 3$, $\pi_2(SP^n(X))=\Z^b$, and for each $i\ge 3$, $\pi_i(SP^n(X))=0$ for all $n> i$.
    \end{prop}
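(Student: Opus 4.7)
The plan is to combine the homotopy splitting in Proposition \ref{2dim} with the skeletal identification \eqref{skeleton}, passing through the model $\ov{SP}^n(X)$ in order to reduce everything to a cellular-approximation argument.

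First, I would read off $\pi_*(SP^{\infty}(X))$ directly from Proposition \ref{2dim}. Since $S^1=K(\Z,1)$, $\C P^{\infty}=K(\Z,2)$, and $L_{k_i}=K(\Z_{k_i},1)$, the product $(S^1)^a\times(\C P^{\infty})^b\times L_{k_1}\times\cdots\times L_{k_r}$ satisfies $\pi_2=\Z^b$ and $\pi_i=0$ for all $i\ge 3$. (This is of course consistent with the Dold--Thom identification $\pi_i(SP^{\infty}(X))=\wt{H}_i(X;\Z)$, from which $\pi_2(SP^{\infty}(X))=\Z^b$ could also be read off.)

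Next, I would transfer this information to $SP^n(X)$. Since $q_n:SP^n(X)\to\ov{SP}^n(X)$ is a homotopy equivalence and the $q_n$ are compatible with the basepoint inclusions, they assemble into a homotopy equivalence $SP^{\infty}(X)\simeq\ov{SP}^{\infty}(X)$ in the colimit, so it suffices to prove the statement for $\ov{SP}^n(X)$. Fix $n$ and let $Y_n$ denote the common $n$-skeleton of $\ov{SP}^n(X)$ and $\ov{SP}^{\infty}(X)$ provided by \eqref{skeleton}. Both ambient spaces are obtained from $Y_n$ by attaching cells of dimension $\ge n+1$, so cellular approximation yields isomorphisms $\pi_i(Y_n)\to\pi_i(\ov{SP}^n(X))$ and $\pi_i(Y_n)\to\pi_i(\ov{SP}^{\infty}(X))$ for every $i<n$. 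Chaining these two isomorphisms together gives $\pi_i(SP^n(X))\cong\pi_i(SP^{\infty}(X))$ for $i<n$.

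To finish, I would specialize: for $n\ge 3$ the case $i=2<n$ gives $\pi_2(SP^n(X))=\Z^b$; and for $i\ge 3$ with $n>i$ the same range gives $\pi_i(SP^n(X))=\pi_i(SP^{\infty}(X))=0$. The main point to verify, though essentially standard, is the cellular-approximation step: namely, that sharing the $n$-skeleton forces agreement of $\pi_i$ for $i<n$, which boils down to the fact that attaching cells of dimension $\ge n+1$ cannot affect $\pi_i$ for $i\le n-1$. The only other subtlety is checking that the $q_n$ really do assemble into a homotopy equivalence $SP^{\infty}(X)\simeq\ov{SP}^{\infty}(X)$, which is immediate from the compatibility of the $q_n$ with basepoint inclusions (a colimit of homotopy equivalences between CW complexes with compatible structures is a homotopy equivalence).
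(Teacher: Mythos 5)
Your proof is correct and takes essentially the same approach as the paper: both rest on Proposition~\ref{2dim} together with the shared-skeleton observation~\eqref{skeleton}, reducing $\pi_i(SP^n(X))$ to $\pi_i(SP^{\infty}(X))$ for $i<n$. The only difference is that the paper first passes to universal covers of $\ov{SP}^n(X)$ and $\ov{SP}^{\infty}(X)$ before running the skeletal argument and then reads off $\pi_i$ from $\wt{X}\simeq(\C P^{\infty})^b$, whereas you apply the $k$-connectedness of the $k$-skeleton inclusion directly to the base spaces — a harmless streamlining, since that connectivity statement does not require simple connectivity — and you also make explicit the homotopy equivalence $SP^{\infty}(X)\simeq\ov{SP}^{\infty}(X)$ that the paper uses implicitly.
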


\begin{proof}
Let us fix some $i\ge 2$. Then for any given $n\ge i+1$, we get from~\eqref{skeleton} that the $(i+1)$-skeleton of $\ov{SP}^{\infty}(X)$ coincides with the $(i+1)$-skeleton of $\ov{SP}^n(X)$. In view of the cellular approximation theorem~\cite{Ha}, the basepoint inclusion map $\ov{SP}^n(X)\hookrightarrow\ov{SP}^{\infty}(X)$ can be assumed to be cellular. 
Thus, for $n>i\ge 2$, we get
    \[
\pi_i(SP^n(X))=\pi_i(\ov{SP}^n(X))=\pi_i(\ov{SP}^{\infty}(X))=\pi_i(SP^{\infty}(X)).
    \]
From Proposition~\ref{2dim}, we have that $\wt{SP^{\infty}(X)}\simeq \R^a \times (\C P^{\infty})^b\times (S^{\infty})^r\simeq (\C P^{\infty})^b$. The conclusion now follows directly from the fact that $\pi_2((\C P^{\infty})^b)=\Z^b$ and $\pi_i((\C P^{\infty})^b)=0$ for all $i \ge 3$,~\cite[Example 4.50]{Ha}.
\end{proof}

We now focus on the symmetric products of surfaces and prove Theorem~\ref{ee}.

\begin{proof}[Proof of Theorem~\ref{ee}]
    Clearly, $X=M_g$ satisfies the hypothesis of Proposition~\ref{2dim} with $a=2g$, $b=1$, and $r=0$. Hence, we apply Proposition~\ref{hh} to $X=M_g$ and conclude that the second homotopy group of $SP^n(M_g)$ is $\Z=\wt{H}_2(M_g;\Z)$ whenever $n\ge 3$, and $\pi_i(SP^n(M_g))=0=\wt{H}_i(M_g;\Z)$ for all $3\le i < n$.
    
    Similarly, $X=N_g$ satisfies the hypothesis of Proposition~\ref{2dim} with $r=1$, $k_1=2$, $a=g-1$, and $b=0$. So, we again apply Proposition~\ref{hh} to $X=N_g$ and conclude that $\pi_i(SP^n(N_g))=0=\wt{H}_i(N_g;\Z)$ for all $2\le i<n$.
\end{proof}

The existence of spin structures on the universal covers of $SP^n(N_g)$ for $n\ge 3$ can now be justified.

\begin{proof}[Proof of Corollary~\ref{ee1}]
We recall that an orientable Riemannian manifold $M$ has a spin structure if and only if its second Stiefel--Whitney class $w_2(M)\in H^2(M;\Z_2)$ vanishes. Since $\pi_2(SP^n(N_g))=0$ for $n\ge 3$ and $g\ge 1$ by Theorem~\ref{ee}, the universal cover $\wt{SP^n(N_g)}$ is $2$-connected and so its second Stiefel--Whitney class must vanish. This completes the proof.
\end{proof}

Motivated by~\cite[Theorem 9.12]{DDJ} in the case of the symmetric products of orientable surfaces, we ask the following question.

\begin{question}
    Is the orientable double cover of $SP^n(N_g)$ a spin $2n$-manifold for some integers $n\ge 2$ and $g\ge 2$?
\end{question}

\subsection{Cohomology}\label{cohong}

From this point onward, we shall write $H^*(X):=H^*(X;\Z_2)$, $H_*(X):=H_*(X;\Z_2)$, and  $H^*(X):=H^*(X;\Z_2)$. 

The structure of the $\Z_2$-cohomology ring $H^*(SP^n(N_g))$ in the case $n\ge 2$ was first studied by Kallel and Salvatore in~\cite[Section 4.2]{KS}. 

The CW structure on $N_g$ consists of one $0$-cell, $g$ $1$-cells, and one $2$-cell attached by the word $e_1^2e_2^2\cdots e_g^2$. Here, $e_i\in H_1(\P_i)$ is the generator for each $i$ (and $\P_i$ is a real projective plane). Using this $\Delta$-complex structure, the elements $e_i$ form a basis for $H_1(N_g)$. We use the same notations $e_i$ for their images in $H_1(SP^n(N_g))$ under the map induced in homology by the basepoint inclusion $N_g \hookrightarrow SP^n(N_g)$. Let $d\in H_2(N_g)$ denote the $\Z_2$-fundamental class and its image in $H_2(SP^n(N_g))$. Let us denote the Hom duals of $e_i$ and $d$ by $e_i^*$ and $d^*$, respectively. Note that in $H^*(N_g)$, we have that $e_i^*e_j^*=0$ for each $i\ne j$ and $(e_i^*)^2=d^*$,~\cite{Ha}.

The following statement, which describes the $\Z_2$-cohomology ring $H^*(SP^n(N_g))$, can be seen as an analog of Theorem~\ref{mc} in the non-orientable setting.

\begin{thm}[\protect{\cite{KS}}]\label{mc2}
    The $\Z_2$-cohomology ring $H^*(SP^n(N_g))$ is generated by the cohomology classes $e_1^*,e_2^*,\ldots,e_g^*$ and $d^*$ under the following relations:
    \vspace{-1.5mm}
    \begin{enumerate}
    \itemsep -0.30em
        \item For each $1\le i\le g$, $(e_i^*)^2=d^*$, and
        \item $e_{i_1}^*\cdots e_{i_r}^*(d^*)^s=0$ whenever $r+s\ge n+1$ for distinct indices $i_1,\ldots,i_r$.
    \end{enumerate}
\end{thm}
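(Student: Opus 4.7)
The plan is to derive Theorem~\ref{mc2} from the explicit CW model of $SP^n(N_g)$ recalled in Section~\ref{symprod}. Equip $N_g$ with its standard $\Delta$-complex structure: one vertex $v$, one $1$-cell $e_i$ for each $\R P^2$ summand ($1 \le i \le g$), and a single $2$-cell $D$ attached along the word $e_1^2 e_2^2 \cdots e_g^2$. By the homotopy equivalence $q_n : SP^n(N_g) \to \ov{SP}^n(N_g)$, it suffices to compute $H^*(\ov{SP}^n(N_g); \Z_2)$.

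First, I would catalog the cells of $\ov{SP}^n(N_g)$. The relation $\sim$ of Section~\ref{symprod} absorbs two factors lying in the same loop $S^1_i$ into a single factor, so a non-degenerate cell is parametrized by a pair $(\{i_1 < \cdots < i_r\},\, s)$, where the first entry is a subset of $\{1,\ldots,g\}$ recording which $1$-cells appear and $s \ge 0$ is the multiplicity of the $2$-cell $D$, subject to $r + s \le n$. Such a cell has dimension $r + 2s$. Define $e_i^* \in H^1(\ov{SP}^n(N_g); \Z_2)$ and $d^* \in H^2(\ov{SP}^n(N_g); \Z_2)$ to be the cellular cochains dual to the cells indexed by $(\{i\},0)$ and $(\emptyset, 1)$, respectively. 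By construction these classes pull back, via the basepoint inclusion $\xi_n : N_g \hookrightarrow \ov{SP}^n(N_g)$, to the correspondingly named classes on $N_g$ used in Theorem~\ref{mc2}.

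Second, I would verify the additive structure and the dimensional relation. Since the chosen cell labels are combinatorially distinct, the cellular boundary vanishes in the relevant range, and cellular cohomology presents $H^*(\ov{SP}^n(N_g); \Z_2)$ as the free $\Z_2$-module on the symbols $\{e_{i_1}^* \cdots e_{i_r}^* (d^*)^s : i_1 < \cdots < i_r,\, r + s \le n\}$. The second relation of Theorem~\ref{mc2}, namely $e_{i_1}^* \cdots e_{i_r}^* (d^*)^s = 0$ whenever $r + s \ge n+1$, is then automatic: no cell of $\ov{SP}^n(N_g)$ carries such a label, so the associated cochain vanishes tautologically.

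Third, I would establish the relation $(e_i^*)^2 = d^*$. It holds in $H^*(N_g; \Z_2)$ because $D$ is attached by a word containing $e_i^2$, and naturality places $(e_i^*)^2 - d^*$ in the kernel of $\xi_n^*$. The main obstacle is that this kernel is nontrivial in degree~$2$: by the enumeration above it is spanned by the products $e_j^* e_k^*$ with $j < k$, so naturality alone permits an \emph{a priori} correction $(e_i^*)^2 = d^* + \sum_{j<k} c_{jk}^{(i)} e_j^* e_k^*$. To close this gap, one approach is a direct cellular cup-product computation on the model $q_n \circ \vartheta_n : N_g^n \to \ov{SP}^n(N_g)$ via the Alexander--Whitney formula, exploiting the fact that the cell $(\{i\},0)$ appears only from the diagonal factor in $N_g^n$; another is to leverage the split epimorphism in Theorem~\ref{split1} together with compatibility with the Dold--Thom equivalence $SP^{\infty}(N_g) \simeq (S^1)^{g-1} \times \R P^{\infty}$ of Proposition~\ref{2dim}, tracking how the $\R P^{\infty}$-factor contributes $d^*$ as the square of the mod-$2$ generator. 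Once the correction terms are ruled out, a dimension count against the basis above confirms no further relations exist, completing the proof.
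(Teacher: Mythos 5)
First, a remark on framing: the paper does not prove Theorem~\ref{mc2}; it cites Kallel--Salvatore~\cite{KS} (specifically Section~4.2 of that paper). So there is no proof in this paper to compare against. That said, your outline --- working with the cellular model $\ov{SP}^n(N_g)$ from Section~\ref{symprod} and reading off generators and relations --- is indeed the natural route and is close in spirit to the argument in~\cite{KS}. But two of your three steps have real gaps.

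The justification for vanishing of the cellular $\Z_2$-boundary (``since the chosen cell labels are combinatorially distinct'') is not a reason. The boundary maps vanish mod~$2$ because the attaching map $e_1^2 e_2^2 \cdots e_g^2$ of the $2$-cell $D$ has integral cellular boundary $2(e_1+\cdots+e_g)$, which is divisible by~$2$, and the $1$-cells have trivial boundary; it is this divisibility, propagated through the product/quotient construction of $\ov{SP}^n$, that kills all mod-$2$ boundaries. Combinatorial distinctness of labels has nothing to do with it.

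More seriously, your claim that relation~(2) is ``automatic'' because ``no cell carries such a label'' is circular. The cup product $e_{i_1}^* \cdots e_{i_r}^* (d^*)^s$ is a cohomology class of degree $r+2s$; it is \emph{not} a priori the cochain dual to the cell $(\{i_1,\ldots,i_r\},s)$ --- that identification is precisely the multiplicative structure the theorem asserts. And $H^{r+2s}(\ov{SP}^n(N_g);\Z_2)$ is generally nonzero even when $r+s>n$: for instance, with $n=g=2$, the class $e_1^*e_2^*d^*$ sits in $H^4(SP^2(N_2);\Z_2)=\Z_2$, which is spanned by $(d^*)^2$, a cell that \emph{does} exist ($r'=0$, $s'=2$, $r'+s'=2\le n$). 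So a priori $e_1^*e_2^*d^*$ could equal $(d^*)^2$; showing it is zero is genuine content, not a tautology. A correct version of your idea would be to first establish the ring structure in $\ov{SP}^{\infty}(N_g)$ (where the product $e_{i_1}^*\cdots e_{i_r}^*(d^*)^s$ really is the dual cochain of $(\{i_1,\ldots,i_r\},s)$, with no constraint on $r+s$) and then use naturality of cup products along the basepoint inclusion $\ov{SP}^n(N_g)\hookrightarrow\ov{SP}^{\infty}(N_g)$ together with the fact that the restriction map on cellular cochains annihilates duals of cells outside $\ov{SP}^n$. As written, you have not done the first step, so the argument does not close.

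You do correctly flag that establishing $(e_i^*)^2 = d^*$ in the symmetric product requires more than naturality from $N_g$, and both approaches you suggest (a direct Alexander--Whitney computation, or tracking the $\R P^{\infty}$ factor through the Dold--Thom splitting of Proposition~\ref{2dim} and the Nakaoka--Dold split surjection of Theorem~\ref{split1}) are reasonable routes. But note that carrying through either would simultaneously nail down the full ring structure, which is exactly the ingredient missing from your treatment of relation~(2); the two gaps are really one and the same.
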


We note that it follows from~\cite[Lemma 19]{KS} that $(d^*)^n \ne 0$.

\subsection{LS-category and topological complexity}\label{lstcsect}

We are now in a position to compute the Lusternik--Schnirelmann category of $SP^n(N_g)$ for each $n$ and $g$.

\begin{proof}[Proof of Theorem~\ref{bb}]
    Of course, for each $n,g\ge 1$, we have the dimensional upper bound $\cat(SP^n(N_g))\le\dim(SP^n(N_g))=2n$ due to Theorem~\ref{clbound}. We now obtain the lower bounds separately in the cases $n\le g$ and $n>g$.
    
    First, let $n\le g$. Then we have from Theorem~\ref{mc2} that
    \[
    \left(e_1^*\right)^2 \left(e_2^*\right)^2 \cdots \left(e_n^*\right)^2 = \left(d^*\right)^n \ne 0.
    \]
Therefore, Theorem~\ref{clbound} gives the inequality $2n\le \cu_{\Z_2}(SP^n(N_g))\le\cat(SP^n(N_g))$ when $n\le g$.

Next, let $n>g$. Then we again get from Theorem~\ref{mc2} that
\[
    \left(e_1^*\right)^{2n-2g+2} \left(e_2^*\right)^2 \left(e_3^*\right)^2 \cdots \left(e_g^*\right)^2 = \left(d^*\right)^{n-g+1}\left(d^*\right)^{g-1} = \left(d^*\right)^n \ne 0.
\]
As before, we get $2n\le \cu_{\Z_2}(SP^n(N_g))\le\cat(SP^n(N_g))$ when $n> g$. Hence, we have that $\cat(SP^n(N_g))=2n$ for all $n,g\ge 1$.
\end{proof}

Unlike $\dcat(SP^n(M_g))$, here we can't determine $\dcat(SP^n(N_g))$ when $n\ge 2$ because $\cu_{\Z_2}(X)$ is not a lower bound to $\dcat(X)$. The methods of~\cite[Section 3]{J2} also do not help here because $\pi_1(SP^n(N_g))=\Z^{g-1}\oplus\Z_2$ has non-trivial torsion.

\begin{remark}
    It is well-known that $SP^n(N_1)=\R P^{2n}$, see, for example,~\cite{KS}. We observe that Theorem~\ref{bb} subsumes the LS-category computation for $\R P^{2n}$. Moreover, Theorem~\ref{ee} and Corollary~\ref{ee1} also recover the corresponding results for $\R P^{2n}$ and $\C P^n$ (\emph{cf}. Remark~\ref{obvobv}).
\end{remark}

We now study the LS-category of finite products of the symmetric products of non-orientable surfaces.

\begin{proof}[Proof of Corollary~\ref{bb2}]
The proof is similar to that of Corollary~\ref{aa2}. The proof of Theorem~\ref{bb} implies that $\cu_{\Z_2}(SP^n(N_g))=\cat(SP^n(N_g))$ for each $n$ and $g$. The K\"unneth formula gives the following isomorphism of $\Z_2$-algebras:
\[
H^*(SP^{n_1}(N_{g_1})\times SP^{n_2}(N_{g_2}))=H^*(SP^{n_1}(N_{g_1}))\otimes_{\Z_2} H^*(SP^{n_2}(N_{g_2})).
\]
So, $\cu_{\Z_2}(SP^{n_1}(N_{g_1}))+\cu_{\Z_2}(SP^{n_2}(N_{g_2}))\le \cu_{\Z_2}(SP^{n_1}(N_{g_1})\times SP^{n_2}(N_{g_2}))$, which gives $\cat(SP^{n_1}(N_{g_1}))+\cat(SP^{n_2}(N_{g_2})) = \cat(SP^{n_1}(N_{g_1})\times SP^{n_2}(N_{g_2}))$ in view of~\eqref{summation}. Thus, our desired conclusions follow from an inductive argument.
\end{proof}

We follow Gromov~\cite{Gr} and say that a closed $k$-manifold $M$ is {\em essential} if a map $u:M\to B\pi_1(M)$ that induces an isomorphism of the fundamental groups cannot be deformed to the $(k-1)$-skeleton of the CW complex $B\pi_1(M)$. In particular, every closed aspherical manifold is essential, although the converse is not true.

\begin{proof}[Proof of Corollary~\ref{bb1}]
Recall that a closed $k$-manifold $M$ is essential if and only if $\cat(M)=\dim(M)=k$, see~\cite[Theorem 4.1]{KR} (and also~\cite[Theorem 2.51]{CLOT}). Therefore, given $n\ge 1$, $SP^n(M_g)$ is essential precisely when $n\le g$ because of Theorem~\ref{oldknown}, and $SP^n(N_g)$ is essential for all $g\ge 1$ due to Theorem~\ref{bb}.
\end{proof}

For the $m$-th sequential topological complexity of $SP^n(N_g)$ for each $n,g\ge 1$ and $m\ge 2$, we have the bounds $2n(m-1)\le\TC_m(SP^n(N_g))\le 2nm$ due to Theorem~\ref{tcbound} and Corollary~\ref{bb2}. Let us discuss the case $n=1$ first. For $m=2$, the lower bound is not attained for any $g$, while the upper bound is attained for each $g\ge 2$, see~\cite{Dr2} and~\cite{CV}. Notably, for $m\ge 3$, the upper bound is always attained due to~\cite{GGGL}.

We now focus on the case $n\ge 2$ for $m=2$ and explain why the technique of Section~\ref{tcm} and~\cite[Section 6.C]{DDJ} that helps determine $\TC(SP^n(M_g))$ does not help in determining $\TC(SP^n(N_g))$ for general $n\ge 2$ and $g\ge 1$.

Given the $\Z_2$-cohomology classes $e_i^*\in H^1(SP^n(N_g))$ for integers $1\le i \le g$, let us define the $\Z_2$-special zero-divisors $\ov{e_i^*}=e_i^*\otimes 1+1\otimes e_i^*$. It is easy to see using Theorem~\ref{mc2} that in mod $2$, for any $k\le \min\{n,g\}$, we have that
\[
\left(\ov{e_1^*}\right)^2\left(\ov{e_2^*}\right)^2\cdots\left(\ov{e_k^*}\right)^2=\left(\left(e_1^*\right)^2\otimes 1+1\otimes \left(e_1^*\right)^2\right)\cdots \left(\left(e_k^*\right)^2\otimes 1+1\otimes \left(e_k^*\right)^2\right)
\]
\[
= \left(d^*\otimes 1+1\otimes d^*\right)^k\in H^{2k}(SP^n(N_g)\times SP^n(N_g)).
\]

For any $j\le k$, since $(d^*)^{j} \ne 0$ by~\cite[Lemma 19]{KS}, $(d^*)^{j}$ is the identity of $H^{2j}(SP^n(N_g);\Z_2)$ because we are working in the coefficient field $\Z_2$. Since $H^*(SP^n(N_g);\Z_2)$ is finitely generated~\cite{KS}, the Kunneth isomorphism (see~\cite[Section 3.2]{Ha}) implies that $(d^*)^{k-i}\otimes (d^*)^i=((d^*)^{k-i}\otimes 1) \smile (1\otimes (d^*)^i)$ is the identity of $H^{2k}(SP^n(N_g)\times SP^n(N_g);\Z_2)$ for $0\le i \le k$. In particular, $(d^*)^{k-i}\otimes (d^*)^i$ and $(d^*)^{i}\otimes (d^*)^{k-i}$ both represent the identity of $H^{2k}(SP^n(N_g)\times SP^n(N_g);\Z_2)$. 

Now we notice the following phenomenon.

\begin{prop}\label{lucas}
In $\Z_2$-coefficients, for each $k\ge 1$ the term $(d^*\otimes 1+1\otimes d^*)^k$ has an even number of terms of the form $(d^*)^{k-i}\otimes (d^*)^i$, where $i \in \{0,\ldots, k\}$.   
\end{prop}
\begin{proof}
By the binomial theorem, it suffices to show that ${k\choose i}\equiv 1(\text{mod }2)$ holds for an even number of $i\in\{0,\ldots,k\}$. Clearly, ${k\choose i}$ is odd if and only if ${k\choose k-i}$ is odd. Moreover, when $k$ is even, ${k\choose k/2}$ is also even. To see this, let us write the binary expansions of $k$ and a fixed $i\le k$ as $k=k_0+k_1\cdot2+\cdots+k_{t-1}\cdot 2^{t-1}+k_t\cdot 2^t$ and $i=i_0+i_1\cdot2+\cdots+i_{t-1}\cdot 2^{t-1}+i_t\cdot 2^t$, respectively, where $t\ge 0$ is an integer and $k_j,i_j\in\{0,1\}$ for $0\le j\le t$. Lucas's Theorem from 1878 (see~\cite{Fi} for a short proof) says that
\[
{k\choose i}\equiv\prod_{j=1}^t{k_j \choose i_j}(\text{mod }2).
\]
Hence, ${k\choose i}$ is odd if and only if $i_j=1 \implies k_j=1$ for any $j$, so it follows that ${k\choose k/2}$ is even. Thus, integers $i\in\{0,\ldots,k\}$ for which ${k\choose i}\equiv 1(\text{mod }2)$ holds always exist in pairs. This completes the proof.
\end{proof}
    
Therefore, for $k\le\min\{n,g\}$, we deduce from Proposition~\ref{lucas} that    
\[
\left(\ov{e_1^*}\right)^2\left(\ov{e_2^*}\right)^2\cdots\left(\ov{e_k^*}\right)^2=\left(d^*\otimes 1+1\otimes d^*\right)^k=0
\]
in $H^{2k}(SP^n(N_g)\times SP^n(N_g);\Z_2)$. Hence, the $\Z_2$-zero-divisor cup-length of $SP^n(N_g)$ for $n,g\ge 1$ does not give us a lower bound to $\TC(SP^n(N_g))$ greater than $2n$ if we use techniques from Section~\ref{tcm}. We also don't know whether an upper bound to $\TC(SP^n(N_g))$ smaller than $4n$ can be obtained in general! 
Therefore, the topological complexity of $SP^n(N_g)$ remains undetermined in the cases $n\ge 2$.

\begin{remark}
Since $SP^n(N_1)=SP^n(\R P^2)=\R P^{2n}$ for each $n\ge 1$, any general result on the topological complexity of $SP^n(N_g)$ for $n\ge 2$ must subsume the main result of~\cite{FTY}, which says that $\TC(\R P^{2n})$ is equal to the immersion dimension of $\R P^{2n}$. This indicates that determining $\TC(SP^n(N_g))$ can be very difficult. On the other hand, while we currently don't know how we can improve the bounds $2n(m-1)\le\TC_m(SP^n(N_g))\le 2nm$ for $m\ge 3$ and $n\ge 2$ in general, it is possible that the problem of determining $\TC_m(SP^n(N_g))$ is slightly more approachable, especially in view of the computations of $\TC_m(N_g)$ for $m\ge 3$ from~\cite{GGGL}\hspace{0.2mm}\footnote{\hspace{0.2mm}The author thanks Jesús González for this observation.}.  
\end{remark}

\section*{Acknowledgement}
The author would like to thank Alexander Dranishnikov for suggesting to him the problem of determining the sequential topological complexities of symmetric products of orientable surfaces. The author is very grateful to the two anonymous referees for their valuable suggestions and for recommending various important changes and fixes that helped enhance this paper.

\end{document}